\def\today{\ifcase\month\or
	January\or February\or March\or April\or May\or June\or
	July\or August\or September\or October\or November\or December\fi
	\space\number\day, \number\year}
\DeclareMathOperator{\supp}{\mathrm{supp}}
\newtheorem{problem}{Extremal Problem}
\newtheorem{theorem}{Theorem}
\newtheorem*{theoremA}{Theorem A}
\newtheorem{conjecture}[theorem]{Conjecture}
\newtheorem{lemma}[theorem]{Lemma} 
\newtheorem{proposition}[theorem]{Proposition}
\newtheorem{corollary}[theorem]{Corollary}
\theoremstyle{definition}
\theoremstyle{remark}
\newtheorem*{remark}{Remark}
\newcommand{\mc}{\mathcal}
\newcommand{\nn}{\mathfrak{n}}
\newcommand{\C}{\mathbb{C}}
\newcommand{\R}{\mathbb{R}}
\newcommand{\Q}{\mathbb{Q}}
\newcommand{\Z}{\mathbb{Z}}
\newcommand{\hh}{\tfrac12}
\newcommand{\dt}{\text{\rm d}t}
\renewcommand{\d}{\text{\rm d}}
\newcommand{\ov}{\overline}
\newcommand{\im}{{\rm Im}\,}
\newcommand{\re}{{\rm Re}\,}
\newcommand{\qq}{\mathfrak{q}}
\newcommand{\pp}{\mathfrak{p}}
\newcommand{\cond}{\mathfrak{f}_\chi}
\newcommand{\norm}[1]{\left\lVert#1\right\rVert}
\title{Fourier optimization and quadratic forms}
\author[Chirre]{Andr\'{e}s Chirre}
\author[Quesada-Herrera]{Oscar E. Quesada-Herrera}
\subjclass[2010]{11E16, 11M26, 11N05, 11R29, 11R42, 42B10}
\keywords{Binary quadratic forms, Fourier analysis, $L$-functions, Generalized Riemann Hypothesis}
\address{Department of Mathematical Sciences, Norwegian University of Science and Technology, NO-7491 Trondheim, Norway}
\email{carlos.a.c.chavez@ntnu.no }
\address{IMPA - Instituto Nacional de Matem\'{a}tica Pura e Aplicada - Estrada Dona Castorina, 110, Rio de Janeiro, RJ, Brazil 22460-320}
\email{oscarqh@impa.br}
\begin{document}

	\allowdisplaybreaks
	\numberwithin{equation}{section}
	\maketitle
	
	\begin{abstract} We prove several results about integers represented by positive definite quadratic forms, using a Fourier analysis approach. In particular, for an integer $\ell\ge 1$, we improve the error term in the partial sums of the number of representations of integers that are a multiple of $\ell$. This allows us to obtain unconditional Brun-Titchmarsh-type results in short intervals, and a conditional Cramér-type result on the maximum gap between primes represented by a given positive definite quadratic form.

	\end{abstract}

	\section{Introduction}
	In this paper, we combine tools from Fourier analysis, analytic number theory, and algebraic number theory to prove a number of new estimates related to integers represented by positive definite quadratic forms. In particular, we improve some results given by Zaman \cite[Proposition 7.1 and Theorem 1.4]{Zam}, concerning these types of estimates. As an application, assuming the Generalized Riemann Hypothesis, we establish a Cram\'er-type result, extending the method developed by Carneiro, Milinovich, and Soundararajan \cite{CMS}.
	
	\subsection{Background} A classical problem in number theory is to understand the distribution of primes represented by positive definite quadratic forms. The survey \cite{Cox} by D. A. Cox is the classical reference on the subject, describing some of the historical milestones of its study and showing how it leads to class field theory.
	
	An integral quadratic form in two variables is a function defined by
	$$f(u,v)=au^2+buv+cv^2,$$
	where $a,b,c \in \Z$. Its discriminant $-D$ is given by $-D=b^2-4ac$. For simplicity, we refer to a form (or quadratic form) as a function $f$ defined in this way. We say that $f$ is positive definite if $D>0$, and $f$ is primitive if its coefficients $a, b$, and $c$ are relatively prime. In the set of primitive forms, we define an equivalence relation in the following way: $f$ and $g$ are properly equivalent if there are integers $p,q,r,s$ such that
	$$
	f(u,v)= g(pu+qv,ru+sv), \hspace{0.2cm} \mbox{and} \hspace{0.2cm} ps-qr=1.
	$$
	Note that two properly equivalent forms have the same discriminant. A primitive positive definite form $f$ is reduced if $|b|\leq a\leq c$ and if, in addition, when $|b|=a$ or $a=c$, then $b\geq 0$. Classical theorems in the theory of quadratic forms (see \cite[Theorem 2.8 and Theorem 2.13]{Cox}) establish that every primitive positive definite form is properly equivalent to a unique reduced form. Moreover, for each $D>0$, the number of classes of primitive positive definite forms of discriminant $-D$ is finite, and it is equal to the number of reduced forms of discriminant $-D$. This number is called the class number and it is denoted by $h(-D)$.

	\subsection{Congruence sums}
	An integer $n$ is represented by the quadratic form $f$ if there is $(u,v)\in \Z^2$ such that $n=f(u,v)$. For $n\geq 0$ an integer, define
	\begin{equation*}
	r_f(n)=\#\{(u,v)\in \Z^2:f(u,v)=n\},
	\end{equation*}
	that is, the number of representations of $n$ by $f$. Motivated by applications using sieve theory, we are interested in estimating the congruence sums
	\begin{equation}\label{eq:congruenceSum}
	\sum_{\substack{n\le x\\ \ell|n}} r_f(n),
	\end{equation}
	where $x\geq1$ is a real number and $\ell\geq 1$ is an integer. In the case $\ell=1$ and $f(u,v)=u^2+v^2$, the congruence sum \eqref{eq:congruenceSum} corresponds to the classical Gauss circle problem.\footnote{\,\,\, For a survey of this problem, see \cite[Section 2.7]{grosswald} and \cite{BER}.} Here, Gauss used a lattice point counting argument to prove that \eqref{eq:congruenceSum} has the asymptotic formula $\pi x+O(x^{1/2})$. Later, Sierpi\'nski improved the error term to $O(x^{1/3})$ using ideas from Voronoi's work on the Dirichlet divisor problem. Afterward, Landau \cite[Treatise I]{landau} extended this asymptotic formula to positive definite quadratic forms (still in the case $\ell=1$), with error term $O(x^{1/3})$, but without making explicit the dependence on $f$ in this error term. For the case where $\ell\geq 1$ is a squarefree\footnote{\,\,\, As we shall see in \eqref{15_342}, our result also holds for an arbitrary integer $\ell\ge 1$, with an adequate function $g(\ell)$ and perhaps a modification in the error term.} integer, we prove the following result.

	\begin{theorem} \label{16_43_02_08}
		Let $f(u,v)=au^2+buv+cv^2$ be a reduced positive definite quadratic form of discriminant $-D$ and let $\ell\geq 1$ be a squarefree integer. Then, for $x\geq D^2$ we have
		\begin{align}  \label{15_56}
		\sum_{\substack{1\leq n\leq x\\ \ell|n}}r_f(n)= \frac{2\pi}{\sqrt{D}} \,g(\ell)\,x+ O\Bigg(\dfrac{\tau(\ell)\,\ell}{D^{1/6}}x^{1/3}+\frac{\tau(\ell)\,\ell^{5/2}D^{3/4}}{a^{7/4}}x^{1/4}\Bigg),
		\end{align}
		where $g$ is the multiplicative function defined by
		\begin{align*} 
		g(p)=\dfrac{1}{p}\bigg(1+\chi(p)-\dfrac{\chi(p)}{p}\bigg)
		\end{align*}
		for all primes $p$, $\chi=\chi_{-D}=\left(\frac{-D}{\cdot}\right)$ is the corresponding Kronecker symbol, and $\tau$ is the divisor function.
	\end{theorem}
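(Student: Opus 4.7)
The plan is to count lattice points in the ellipse $E_x = \{(u,v)\in\R^2 : f(u,v)\le x\}$ along shifted sublattices of $\Z^2$ indexed by residue classes modulo $\ell$. I would first partition the set $\{(u,v)\in\Z^2 : \ell \mid f(u,v)\}$ according to $(u_0,v_0) \in S_\ell \subset (\Z/\ell\Z)^2$. Since $\ell$ is squarefree, the Chinese Remainder Theorem reduces $|S_\ell|$ to a product over primes $p\mid\ell$, and a direct analysis of the quadratic form $f \bmod p$ yields $2p-1$ zeros when $\chi(p)=1$, a single zero when $\chi(p)=-1$, and $p$ zeros when $\chi(p)=0$. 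In each case $|S_\ell|/\ell^2 = g(\ell)$, which fixes the main-term coefficient.

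For each class $(u_0,v_0) \in S_\ell$, I would estimate
\[
N_\ell(x;u_0,v_0) := \#\{(u,v)\in \ell\Z^2 + (u_0,v_0) : 0 < f(u,v) \le x\}
\]
by Poisson summation with a smooth boundary cutoff. Fix a parameter $\delta\in(0,1)$ to be optimized and choose smooth $\varphi_-\le \mathbf{1}_{[0,1]} \le \varphi_+$ with $\|\varphi_\pm - \mathbf{1}_{[0,1]}\|_1 \ll \delta$ whose Fourier transforms are essentially supported at scale $\delta^{-1}$. Applying Poisson summation to $\Phi_\pm(u,v) = \varphi_\pm(f(u,v)/x)$ on the shifted lattice yields
\[
\sum_{(u,v)\in \ell\Z^2 + (u_0,v_0)} \Phi_\pm(u,v) = \frac{1}{\ell^2}\sum_{(m,n)\in\Z^2} \widehat{\Phi}_\pm(m/\ell, n/\ell)\, e^{2\pi i(mu_0+nv_0)/\ell}.
\]
The zero frequency contributes $\frac{1}{\ell^2}\cdot\frac{2\pi x}{\sqrt D}\,(1+O(\delta))$; summing over $S_\ell$ reproduces the claimed asymptotic $\frac{2\pi}{\sqrt D}g(\ell)x$.

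For the nonzero frequencies, I would diagonalize $f$ via the substitution $s = \sqrt{a}\,(u + bv/(2a))$, $t = \sqrt{D/(4a)}\,v$, sending $E_x$ to the disk of radius $\sqrt{x}$; the Fourier transform of the disk's indicator is the standard $J_1$-Bessel kernel, and the dual lattice vectors scale anisotropically as $\ell^{-1}\sqrt{a}$ and $\ell^{-1}\sqrt{D/(4a)}$. Using the estimate $J_1(u)\ll \min(u, u^{-1/2})$ together with the $\delta^{-1}$-cutoff from $\widehat\varphi_\pm$, the dual sum splits into two parts: a classical Bessel-sum tail contributing $\tau(\ell)\ell D^{-1/6}x^{1/3}$ after optimizing $\delta$, and a boundary contribution of order $\tau(\ell)\ell^{5/2}D^{3/4}a^{-7/4}x^{1/4}$. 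The divisor factor $\tau(\ell)$ comes from bounding the Gauss-type exponential sum $\sum_{(u_0,v_0)\in S_\ell} e^{2\pi i(mu_0+nv_0)/\ell}$, which for squarefree $\ell$ factors over primes.

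The main obstacle will be the careful tracking of the anisotropic decay of the dual Bessel sums with sharp dependence on $a$, $D$, and $\ell$. The reduced form has aspect ratio $\sqrt{D}/(2a)$, so the dual lattice is correspondingly stretched, and the radial and angular scales of summation must be balanced against the smoothing scale $\delta$ to produce the correct combined shape of the error term. The hypothesis $x \ge D^2$ ensures that the main term dominates in the range of interest and that the Fourier-analytic truncations above remain sharp, while the $a^{-7/4}$ exponent in the secondary error term reflects the worst-case effect of a very elongated ellipse.
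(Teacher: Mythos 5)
Your proposal is correct and follows essentially the same route as the paper: a smoothed indicator of the ellipse, Poisson summation on (shifted) lattices, Bessel-function decay of the dual sum with partial summation against $\sum_{n\le x} r_f(n)=\tfrac{2\pi x}{\sqrt{D}}+O(\sqrt{x/a})$, and optimization of the smoothing scale; your decomposition into residue classes $(u_0,v_0)\in S_\ell$ with the exponential sums $\sum_{(u_0,v_0)}e^{2\pi i(mu_0+nv_0)/\ell}$ is just the Fourier-dual formulation of the paper's expansion of the congruence indicator on $\Lambda/\ell\Lambda$ into additive characters, and the trivial bound $|S_\ell|=g(\ell)\ell^2\le \tau(\ell)\ell$ on those sums is all that is needed. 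The only substantive deviation is that you compute $g(\ell)$ by counting zeros of $f$ modulo each prime directly (the paper instead reads it off from Zaman's main term), and your unshifted dual lattice sidesteps the paper's Lemma \ref{lem:translations} on anomalously small shifted values of $f$.
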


\noindent Theorem \ref{16_43_02_08} improves a result of Zaman \cite[Proposition 7.1]{Zam}, whose error term is of magnitude $x^{1/2}$. Note that, when $\ell=1$, we recover Landau's result, with an explicit dependence on $f$ in the error term. 

\smallskip
As we shall see in the next section, a direct application of Selberg's sieve allows us to use Theorem \ref{16_43_02_08} to obtain upper bounds for the number of primes represented by $f$, in short intervals.
\subsection{Brun-Titchmarsh-type result}  Assume that $f$ is a primitive positive definite quadratic form. For $x\geq 1$, let $\pi_f(x)$ be the number of primes represented by $f$ up to $x$, i.e.,
	$$
	\pi_f(x)=\#\{p\leq x:p=f(u,v) \hspace{0.2cm} \mbox{for some}\hspace{0.15cm}  (u,v)\in \Z^2\}.
	$$
	The classical result for $\pi_f(x)$ goes back to de la Vall\'ee Poussin (see, for instance \cite{Na}), and establishes that, as $x\to\infty$, 
	\begin{align*}
	\pi_f(x) \sim \dfrac{\delta_f\,x}{h(-D)\log x},
	\end{align*}
	where
	\begin{equation}\label{20_39}
	\delta_f= \left\{
	\begin{array}{ll}
	\dfrac{1}{2} ,     & \mathrm{if\ } f(u,v) \, \, \mbox{is properly equivalent to\ } f(u,-v);  \\
	1     & \mathrm{otherwise.\ } 
	\end{array}
	\right.
	\end{equation}
	Assuming the Generalized Riemann Hypothesis (GRH), we also have (see \cite{LO})
	\begin{align} \label{14_17} 
	\pi_f(x)=\dfrac{\delta_f\,\mbox{Li}(x)}{h(-D)}+ O\big(x^{1/2}\log(Dx)\big),
	\end{align} 
	for $x\geq 2$, where 
	$$\mbox{Li}(x)=\int_{2}^{x}\frac{1}{\log t}\,\dt.$$ Recently, Thorner and Zaman \cite[Corollary 1.3]{TZam} established a Brun-Titchmarsh result, improving upon the Chebotarev version given by Lagarias-Montgomery-Odlyzko \cite{LMO}. Unconditionally, they showed that, for $D$ sufficiently large, 
	\begin{align} \label{17_17}
	\pi_f(x)<\dfrac{2\,\delta_f\,\mbox{Li}(x)}{h(-D)}, \hspace{0.2cm} \mbox{for}\hspace{0.1cm}  x\geq D^{700}.
	\end{align}
	We want to establish a result similar to \eqref{17_17} for primes in short intervals.\footnote{\,\,\, Montgomery and Vaughan \cite[Theorem 2]{MV} gave a classical version for primes in arithmetic progressions, in short intervals.} For instance, 
	if we assume GRH, from \eqref{14_17} we get that 
	\begin{align*} 
	\pi_f(x)-\pi_f(x-y) \ll \dfrac{\delta_f\,y}{h(-D)\log y},
	\end{align*}
	for $(Dx)^{1/2+\varepsilon}\leq y\leq x$. Unconditionally, Zaman used his asymptotic formula for the congruence sum \eqref{eq:congruenceSum} and Selberg's sieve to establish a similar Brun-Titchmarsh-type result 
	in short intervals \cite[Theorem 1.4]{Zam}, with the same order of magnitude. 
	
	\begin{theoremA}[Zaman \cite{Zam}] \label{brunzaman}
		Let $f(u,v)=au^2+buv+cv^2$ be a reduced positive definite quadratic form of discriminant $-D$, and let $\varepsilon>0$ be arbitrary. Suppose that
		\begin{align} \label{18_10}
		\bigg(\dfrac{D^{2}}{a}\bigg)^{1/2+\varepsilon}\,x^{1/2+\varepsilon}\leq y\leq x.
		\end{align}
		Then,
		\begin{align*} 
		\pi_f(x)-\pi_f(x-y) < \dfrac{2}{(1-\theta')}\cdot\dfrac{\delta_f\,y}{h(-D)\log y}\bigg(1+O_\varepsilon\bigg(\dfrac{\log\log y}{\log y}\bigg)\bigg),
		\end{align*} 
		where 
		\begin{align*} 
		\theta'=\dfrac{\log x}{2\log y} + \bigg(\dfrac{3}{4}+\dfrac{\varepsilon}{4}\bigg)\dfrac{\log D}{\log y} - \dfrac{\log a}{2\log y}.
		\end{align*} 
	\end{theoremA}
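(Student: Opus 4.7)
The strategy is to apply Selberg's upper bound sieve to the sequence $r_f(n)$ for $n \in (x-y, x]$, taking as local density the multiplicative function $g$ from the asymptotic of $\sum_{\ell \mid n} r_f(n)$ (Zaman's Proposition~7.1, with error of size $x^{1/2}$ up to polynomial factors in $d$, $D$, and $a$).  For $D > 4$, a prime $p \nmid D$ represented by $f$ satisfies $r_f(p) = 2/\delta_f$ (namely $4$ when $f$ is ambiguous and $2$ otherwise), so passing from prime-counting to the sifted sum yields
$$\pi_f(x) - \pi_f(x - y) \;\le\; \frac{\delta_f}{2} \sum_{\substack{x - y < n \le x \\ (n, P(z)) = 1}} r_f(n) \;+\; O(z),$$
where $P(z) = \prod_{p < z} p$ and the error term handles small primes.

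Next, apply Selberg's $\Lambda^2$-sieve at level $z$, yielding the upper bound $X/G(z) + R$ for the sifted sum, where $X = \sum_{x - y < n \le x} r_f(n) \sim 2\pi y/\sqrt{D}$ and
$$G(z) \;=\; \sum_{\substack{d \le z \\ d \mid P(z)}} \prod_{p \mid d} \frac{g(p)}{1 - g(p)}.$$
The key observation for the main term is the factorisation $1 - g(p) = (1 - 1/p)(1 - \chi(p)/p)$, which lets one identify $\prod_p (1 - g(p))^{-1}(1 - 1/p)$ as $L(1, \chi_{-D})$.  A standard one-dimensional sifting argument then gives $G(z) \sim L(1, \chi_{-D}) \log z$; combined with Dirichlet's class number formula $L(1, \chi_{-D}) = \pi h(-D)/\sqrt{D}$, this yields $X/G(z) \sim 2 y/(h(-D) \log z)$.

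For the remainder, the $\Lambda^2$-sieve gives $R \ll \sum_{d \le z^2} \tau_3(d) |r_d|$, where $r_d$ denotes the error in the congruence-sum asymptotic at scale $y$, and Zaman's bound (of order $x^{1/2}$ up to polynomial factors in $d$, $D$, and $a$) controls this sum.  Choosing $\log z = \tfrac{1 - \theta'}{2} \log y$ with $\theta'$ exactly as in the statement, the hypothesis \eqref{18_10} is precisely what is needed to guarantee $R = o(X/G(z))$; multiplying the resulting main-term bound by $\delta_f/2$ and assembling all pieces then yields the claimed estimate.

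The main technical obstacle, I expect, is the two-sided bookkeeping: on one hand, passing from the truncated sum $G(z)$ to its limiting Euler product with controlled error of order $O(\log \log y / \log y)$; on the other, tracking the precise polynomial dependence of $r_d$ on $d$, $D$, $a$ carefully enough to verify that \eqref{18_10} is exactly the condition under which the sieve remainder is negligible compared with the main term.  Everything else reduces to a textbook application of Selberg's sieve in dimension one.
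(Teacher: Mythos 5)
The statement you are proving is Theorem A, which the paper quotes from Zaman without proof; the closest thing to a proof in the paper is Section \ref{sec:brun}, where Theorem \ref{brun} is established by exactly the strategy you describe --- Selberg's upper bound sieve applied to the congruence sums over $(x-y,x]$, the reduction $\tfrac{w}{\delta_f}(\pi_f(x)-\pi_f(x-y))\le\sum_{(n,P)=1}r_f(n)+O(\pi(z))$, the identification of the singular series with $L(1,\chi_{-D})$ via $1-g(p)=(1-1/p)(1-\chi(p)/p)$, and the class number formula --- except that the paper substitutes its improved $x^{1/3}$ error term (Theorem \ref{16_43_02_08}) for Zaman's $x^{1/2}$ bound. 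Your outline is faithful to that template and the arithmetic of the main term checks out: $X/G(z)\sim 2y/(h(-D)\log z)$ times $\delta_f/2$ with $\log z\approx\tfrac{1-\theta'}{2}\log y$ does reproduce the stated constant.

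The one step you treat as automatic that genuinely requires care is the claim $G(z)\sim L(1,\chi_{-D})\log z$. What the truncated density sum actually yields is a lower bound of the shape \eqref{2_582}, namely $\sum_{\ell<z}g(\ell)\ge L(1,\chi)\log z-(\tfrac18+\varepsilon)L(1,\chi)\log D+O\big(L(1,\chi)+z^{-\varepsilon^2/2}\big)$. Two consequences follow. First, the $\log D$ loss here contributes part of the coefficient $(\tfrac34+\tfrac{\varepsilon}{4})$ of $\log D/\log y$ in $\theta'$, so attributing all of $\theta'$ to the choice of $z$ forced by the remainder control is not the right bookkeeping (the correct $z$ is a small power of $D$ larger than $y^{(1-\theta')/2}$). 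Second, and more seriously, the error $O(z^{-\varepsilon^2/2})$ need not be $o(L(1,\chi)\log z)$ when $L(1,\chi)$ is abnormally small (a possible Siegel zero), so the asymptotic for $G(z)$ is not uniform in $D$, whereas the theorem is claimed uniformly. Both Zaman and the paper repair this by a case split: when $L(1,\chi)<(\log y)^{-2}$ the sieve is bypassed entirely, and the bound follows from the $\ell=1$ congruence sum combined with the class number formula, since then $y\,L(1,\chi)/h(-D)\ll y/(h(-D)(\log y)^2)$ is already smaller than the target. Without this case division, the step ``$G(z)\sim L(1,\chi)\log z$'' is a genuine gap in the argument as written.
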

	
	\medskip
	
\noindent Using Theorem \ref{16_43_02_08}, we are able to establish an analogous result to Theorem A, for a range beyond \eqref{18_10}.
	
	\begin{theorem} \label{brun}
		Let $f(u,v)=au^2+buv+cv^2$ be a reduced\,\footnote{\,\,\, The hypothesis of being reduced can be removed, and Theorem \ref{brun} holds for any primitive positive definite quadratic form, by considering $a=1$ in the range \eqref{15_555} and in the values of $\theta_1$ and $\theta_2$. A similar situation occurs in Theorem A (see Remark (ii) in \cite[Theorem 1.4]{Zam}).} positive definite quadratic form of discriminant $-D$. Then, the following statements hold.
		\begin{enumerate}
			\item Let $0<\varepsilon<1/20$ be arbitrary, and suppose that
			\begin{align} \label{15_555}
			\dfrac{D^{2}}{a}\,x^{1/3+\varepsilon}\leq y\leq x^{4/9}.
			\end{align}
			Then,
			$$
			\pi_f(x)-\pi_f(x-y) < \dfrac{4}{(1-\theta_1)}\cdot\dfrac{\delta_f\,y}{h(-D)\log y}\bigg(1+O_\varepsilon\bigg(\dfrac{\log\log y}{\log y}\bigg)\bigg),
			$$
			where 
			$$
			\theta_1 = \dfrac{\log x}{3\log y} + \bigg(\dfrac{4}{3}+\varepsilon\bigg)\dfrac{\log D}{\log y} -  \dfrac{\log a}{\log y}.
			$$
			\item Suppose that
			\begin{align} \label{15_556}
			x^{4/9}\leq y\leq x^{3/5} \hspace{0.3cm} \mbox{and} \hspace{0.3cm} x\geq D^{18}.
			\end{align}
			Then,
			$$
			\pi_f(x)-\pi_f(x-y) < \dfrac{7}{(1-\theta_2)}\cdot\dfrac{\delta_f\,y}{h(-D)\log y}\bigg(1+O\bigg(\dfrac{\log\log y}{\log y}\bigg)\bigg),
			$$where
			$$
			\theta_2 = \dfrac{\log x}{4\log y} +\dfrac{31\log D}{12\log y} - \dfrac{7\log a}{4\log y}.
			$$
		\end{enumerate}
	\end{theorem}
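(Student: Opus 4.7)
My plan is to bound $\pi_f(x) - \pi_f(x-y)$ by applying Selberg's upper-bound sieve to the weighted sequence $\mathcal A = \bigl(r_f(n)\bigr)_{x-y < n \le x}$, using Theorem~\ref{16_43_02_08} as the distributional input. Applying that theorem at both $x$ and $x-y$ (the condition $x-y \ge D^2$ holds in both ranges: $x \ge D^{18}$ in case~(2), and \eqref{15_555} forces the analogous bound in case~(1)) gives, for every squarefree $d \ge 1$,
\[
|\mathcal A_d| := \sum_{\substack{x-y < n \le x \\ d \mid n}} r_f(n) = \frac{2\pi\,g(d)}{\sqrt D}\,y + r_d,
\qquad
|r_d| \ll \frac{\tau(d)\,d}{D^{1/6}}\,x^{1/3} + \frac{\tau(d)\,d^{5/2}\,D^{3/4}}{a^{7/4}}\,x^{1/4}.
\]
Since, for $D > 4$, a prime represented by $f$ satisfies $r_f(p) = 2/\delta_f$ (two via the proper automorphism $\pm I$, doubled for ambiguous $f$ by its improper automorphism), the Selberg sieve with level $\xi$ and sifting cutoff $z = \xi^{1/2}$ then produces
\[
\pi_f(x) - \pi_f(x-y) \;\le\; \frac{\delta_f}{2}\Big(\frac{Y}{V(\xi)} + R\Big) + O(z),
\]
where $Y = 2\pi y/\sqrt D$, $V(\xi) = \sum_{d \le \xi,\,\mu^2(d) = 1,\,d \mid P(z)} \prod_{p \mid d} g(p)/(1-g(p))$, and $R$ is the remainder formed from the quantities $3^{\omega(d)}|r_d|$.

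To evaluate $V(\xi)$, I would use the factorization $1 - g(p) = (p-1)(p-\chi(p))/p^2$ to split $\prod_{p<z}(1-g(p))^{-1}$ into a Mertens factor ($\sim e^{\gamma}\log z$) and the truncation of the Euler product for $L(1,\chi)$, and then invoke the Dirichlet class number formula $L(1,\chi) = \pi h(-D)/\sqrt D$ (with the routine correction when $-D$ is not fundamental) to convert the main term into a multiple of $y/(h(-D)\log\xi)$. For the remainder, the identity $\tau(d)\,3^{\omega(d)} = 6^{\omega(d)}$ together with standard divisor-sum estimates yields
\[
R \;\ll\; \frac{x^{1/3}\,\xi^2\,(\log\xi)^5}{D^{1/6}} \;+\; \frac{D^{3/4}\,x^{1/4}\,\xi^{7/2}\,(\log\xi)^5}{a^{7/4}}.
\]

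The two statements in Theorem~\ref{brun} then fall out of choosing $\xi$ as large as possible while keeping $R$ dominated by the main term. In case~(1), where $y \le x^{4/9}$, the first ($x^{1/3}$) error wins, and balancing it against the main term forces $\xi^2 \sim y\,a^{\alpha_1}D^{\beta_1}/x^{1/3}$ for appropriate $\alpha_1,\beta_1$; this gives $\log\xi \sim \tfrac12(1-\theta_1)\log y$ and produces the constant~$4$. In case~(2), where $x^{4/9} \le y \le x^{3/5}$, the second ($x^{1/4}$) error wins, and balancing gives $\xi^{7/2} \sim y\,a^{7/4}D^{\beta_2}/x^{1/4}$, hence $\log\xi \sim (2/7)(1-\theta_2)\log y$ and the constant~$7$. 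The hypothesis $x \ge D^{18}$ in \eqref{15_556} is precisely what is needed to keep $\theta_2 < 1$ and render the bound non-vacuous.

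The main obstacle I anticipate is the careful bookkeeping that produces the precise coefficients $(4/3+\varepsilon)\log D/\log y$ and $\log a/\log y$ in $\theta_1$, together with $31\log D/(12\log y)$ and $7\log a/(4\log y)$ in $\theta_2$. These coefficients encode the interplay between (i) the $\sqrt D$ introduced by the class number formula, (ii) the distinct $D$- and $a$-dependencies of the two error terms in Theorem~\ref{16_43_02_08}, and (iii) the slack needed to absorb the $(\log\xi)^5$ divisor loss and any Siegel-type loss in $L(1,\chi)$ into a single $1 + O(\log\log y/\log y)$ relative error. The underlying method is standard Selberg sieve technology; the delicate part is making all of these logarithms balance to yield the claimed exponents.
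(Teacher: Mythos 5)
Your overall strategy is exactly the paper's: apply Selberg's upper bound sieve to the weights $r_f(n)$ on $(x-y,x]$, feed in Theorem \ref{16_43_02_08} as the level-of-distribution input, convert $2\pi/\sqrt{D}$ into $\delta_f/h(-D)$ via the class number formula \eqref{8.4}, and choose the sieve level to balance the two error terms (the $x^{1/3}$ term dominating in range \eqref{15_555}, the $x^{1/4}$ term in range \eqref{15_556}); your remainder bound and the final balancing $\log z \sim \tfrac14(1-\theta_1)\log y$, resp.\ $\tfrac17(1-\theta_2)\log y$, match the paper's computation. (Minor slip: your $V(\xi)$ should run over $d\le\sqrt{\xi}=z$, not $d\le\xi$, to be consistent with \cite[Theorem 7.1]{opera} and with the constants $4$ and $7$ you then extract.)

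There is, however, one genuine missing idea: the dichotomy on the size of $L(1,\chi)$. The lower bound for the sieve denominator (the paper's \eqref{2_582}) has the shape $\sum_{\ell<z}g(\ell)\ge L(1,\chi)\log z-(\tfrac18+\varepsilon)L(1,\chi)\log D+O\big(L(1,\chi)+z^{-\varepsilon^2/2}\big)$, and the character-sum error $O(z^{-\varepsilon^2/2})$ carries \emph{no} factor of $L(1,\chi)$. In a near--Siegel-zero regime $L(1,\chi)$ may be extremely small (only ineffectively bounded below), in which case this error term swamps the main term $L(1,\chi)\log z$ and the sieve bound collapses; this loss is multiplicative in the final estimate and cannot be ``absorbed into a single $1+O(\log\log y/\log y)$ relative error'' as you suggest. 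The paper resolves this by splitting into cases: when $L(1,\chi)\ge(\log y)^{-2}$ the sieve argument above goes through, while when $L(1,\chi)<(\log y)^{-2}$ one abandons the sieve entirely and uses the unsifted sum ($\ell=1$ in Theorem \ref{16_43_02_08}) together with \eqref{8.4}, which gives $\pi_f(x)-\pi_f(x-y)\ll \delta_f\,y\,L(1,\chi)/h(-D)\ll y/(h(-D)(\log y)^2)$ --- already negligible against the claimed bound. Without this case split (or some substitute for it), the argument as you outline it does not yield the stated unconditional, $D$-uniform inequality. The rest of your plan, including the role of $x\ge D^{18}$ and the source of the $\log D$ and $\log a$ coefficients in $\theta_1,\theta_2$ (which come from the $-(\tfrac18+\varepsilon)\log D$ correction in \eqref{2_582} together with the explicit choices of $z$), is on target.
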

	
	\medskip
	
\noindent	Theorem \ref{brun} states a Brun-Titchmarsh-type inequality in short intervals, for $x^{1/3+\varepsilon}\lesssim y\leq x^{3/5}$, extending the range \eqref{18_10} in Theorem A. This also improves the constant in the range $x^{1/2+\varepsilon}\leq y\leq x^{3/5}$, since we have that
	$$
	\dfrac{7}{1-\theta_2}< \dfrac{2}{1-\theta'}<\dfrac{2}{\varepsilon}.
	$$
	The associated constants in our results can be estimated, uniformly, by
	$$
	16<\dfrac{4}{1-\theta_1}< \dfrac{16}{9\varepsilon}, \hspace{0.3cm} \mbox{and} \hspace{0.3cm}  12<\dfrac{7}{1-\theta_2}\leq \dfrac{672}{11}.$$
	We highlight that, unlike in Theorem A, even under the assumption of GRH, the order of magnitude of the bounds in Theorem \ref{brun} cannot be obtained using \eqref{14_17}.\footnote{\,\,\, Assuming GRH for quadratic Dirichlet $L$-functions modulo $D$, Theorem \ref{brun} can be stated with slight changes in the power of $D$ on the ranges, and in the definition of $\theta_1$ and $\theta_2$. A similar situation occurs in Theorem A (see \cite[Theorem 1.4]{Zam}).} 
	
	\smallskip

	As we shall see, the special case $y=x^{1/2}$ will be useful in the following form:
	
	\begin{corollary} \label{cor:brun} Let $f(u,v)=au^2+buv+cv^2$ be a fixed primitive positive definite quadratic form of discriminant $-D$. Then, 
		$$
		\pi_f(x+\sqrt{x})-\pi_f(x) \leq \dfrac{28\,\delta_f\,\sqrt{x}}{h(-D)\log x}(1+o(1)),
		$$
		as $x\to\infty$.
	\end{corollary}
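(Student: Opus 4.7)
The plan is to deduce the corollary as a direct application of Theorem \ref{brun} (2), by rewriting
$$
\pi_f(x+\sqrt{x}) - \pi_f(x) = \pi_f(X) - \pi_f(X - y)
$$
for the choice $X := x + \sqrt{x}$ and $y := \sqrt{x}$.

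I would first verify the hypotheses of Theorem \ref{brun} (2) with $(X, y)$ in place of $(x, y)$. Since $4/9 < 1/2 < 3/5$ and $y / X^{1/2} \to 1$ as $x \to \infty$, the two-sided bound $X^{4/9} \leq y \leq X^{3/5}$ holds for all sufficiently large $x$; the size requirement $X \geq D^{18}$ is automatic in the limit, since $D$ is fixed while $x \to \infty$. As $f$ is only assumed primitive (not necessarily reduced), I would invoke the footnote in Theorem \ref{brun} and set $a = 1$ in $\theta_2$; if $f$ happens already to be reduced, then $a$ is a fixed constant and the $(\log a)/(\log y)$ term still vanishes in the limit anyway.

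Next I would compute the asymptotic constant. With $\log y = \tfrac{1}{2}\log x$ and $\log X = \log x + O(x^{-1/2})$, and $D$ fixed,
$$
\theta_2 = \frac{\log X}{4 \log y} + \frac{31 \log D}{12 \log y} = \frac{1}{2} + o(1),
$$
so $7/(1 - \theta_2) \to 14$. The factor $y/\log y = \sqrt{x}/(\tfrac{1}{2}\log x)$ then contributes an additional $2$, producing the claimed constant $28$. Combining these with the $(1 + O(\log\log y/\log y)) = (1+o(1))$ factor from Theorem \ref{brun} (2) gives
$$
\pi_f(x+\sqrt{x}) - \pi_f(x) \leq \frac{28\,\delta_f\,\sqrt{x}}{h(-D)\log x}\,(1 + o(1)),
$$
as desired.

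No serious obstacle is expected here: the corollary is essentially a numerical specialization of Theorem \ref{brun} (2) at $y = \sqrt{x}$, which lies strictly inside the admissible range $[X^{4/9}, X^{3/5}]$; the only care required is in checking the hypotheses as $x \to \infty$ and tracking the constants.
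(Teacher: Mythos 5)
Your proposal is correct and is exactly the intended derivation: the paper offers no separate proof of Corollary \ref{cor:brun}, treating it as the specialization $y=\sqrt{x}$ (applied at $X=x+\sqrt{x}$) of Theorem \ref{brun}(2), where $\theta_2\to 1/2$ for fixed $D$ and $a$, so $7/(1-\theta_2)\to 14$ and the factor $\log y=\tfrac12\log x$ supplies the remaining factor of $2$ to give $28$. Your handling of the hypotheses (the range $X^{4/9}\le y\le X^{3/5}$ for large $x$, the condition $X\ge D^{18}$, and the reduced-versus-primitive point via the footnote) is exactly right.
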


	\subsection{Cram\'er-type result} Let $\pi(x)$ denote be the number of primes up to $x$. A classical theorem of Cram\'er \cite{cramer} states that, assuming the Riemann Hypothesis (RH), there are constants $c,\alpha>0$ such that
	$$
	\dfrac{\pi(x+c\sqrt{x}\log x)-\pi(x)}{\sqrt{x}}>\alpha
	$$
	for all $x$ sufficiently large. Recently, using a 
	Fourier analysis approach, Carneiro, Milinovich and Soundararajan \cite[Theorem 1.3]{CMS} established this estimate in an optimized explicit form. They proved that, under RH, for $\alpha\geq 0$ we have
	\begin{align*} \inf\bigg\{c>0;\,\, \displaystyle\liminf_{x\to\infty}\,\dfrac{\pi(x+c\sqrt{x}\log x)-\pi(x)}{\sqrt{x}}> \alpha\bigg\}< \dfrac{21}{25}(1+2\alpha).
	\end{align*}
	This was slighlty improved by Chirre, Pereira and de Laat \cite{CPL}, replacing $21/25=0.84$ by $0.8358$. Furthermore, they obtained an analogous result for primes in arithmetic progressions. Our next result extends these techniques for primes represented by quadratic forms.

	\begin{theorem}	\label{thm:cramer} Let $f$ be a primitive positive definite quadratic form of discriminant $-D$. Assume the Generalized Riemann Hypothesis for Hecke $L$-functions. Then, for $\alpha\geq 0$, 
		\begin{align*}
		\inf\bigg\{c>0;\,\, \displaystyle\liminf_{x\to\infty}\dfrac{\pi_f\left(x+c\,\sqrt{x}\log x\right)-\pi_f(x)}{\sqrt{x}}> \alpha\bigg\}< 1.837\,\frac{(\delta_f+\alpha)\,h(-D)}{\delta_f}.
		\end{align*}
	\end{theorem}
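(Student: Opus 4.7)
My plan is to adapt the Fourier-analytic framework of Carneiro-Milinovich-Soundararajan~\cite{CMS} and its refinement by Chirre-Pereira-de Laat~\cite{CPL} to Hecke $L$-functions attached to characters of the (ring) class group of discriminant $-D$. The first ingredient is the classical correspondence between primitive positive definite form classes and ideal classes, which yields, up to an $O(1)$ contribution from primes dividing $D$,
\begin{equation*}
\pi_f(x)\,=\,\frac{\delta_f}{h(-D)}\sum_{\chi}\overline{\chi}(C_f)\,\pi(x,\chi),
\end{equation*}
where $\chi$ ranges over the characters of the form class group, $C_f$ denotes the class of $f$, and $\pi(x,\chi)=\sum_{N\mathfrak{p}\le x}\chi(\mathfrak{p})$ is the Chebotarev-type prime sum attached to the Hecke $L$-function $L(s,\chi)$. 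The trivial character produces the expected main term $\delta_f\,\mathrm{Li}(x)/h(-D)$, while each non-trivial character yields a degree-$2$ Hecke $L$-function to which GRH applies.

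Following \cite{CMS,CPL}, I would next analyze the smoothed short-interval sum
\begin{equation*}
S(x,c)\,:=\,\sum_{n}\Lambda_f(n)\,F\!\left(\frac{n-x}{c\,\sqrt{x}\,\log x}\right),
\end{equation*}
where $\Lambda_f$ is the von Mangoldt-type weight supported on prime powers represented by $f$ and $F:\R\to\R$ is a non-negative Schwartz function with $\hat{F}$ compactly supported. Substituting the character decomposition and applying the explicit formula for each $L(s,\chi)$ under GRH, one reduces $S(x,c)$ to a main term of size $(\delta_f\,c\,\sqrt{x}\,\log x/h(-D))\,\hat{F}(0)$ plus an oscillatory error sum
\begin{equation*}
\sqrt{x}\sum_{\chi}\overline{\chi}(C_f)\sum_{\gamma_\chi}\hat{F}\!\left(\frac{c\,\gamma_\chi\log x}{2\pi}\right)x^{i\gamma_\chi}
\end{equation*}
running over the non-trivial zeros $\tfrac12+i\gamma_\chi$ of $L(s,\chi)$. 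Compact support of $\hat{F}$ restricts this to finitely many zeros per character, and the triangle inequality together with the Riemann-von Mangoldt density for Hecke $L$-functions bounds the total error by $K\,\|F\|_1\,\sqrt{x}$, with $K$ carrying an extra factor of two compared to the CPL setting, reflecting the degree-$2$ zero density of each $L(s,\chi)$.

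Choosing $F$ to be a non-negative majorant of $\mathbf{1}_{[0,1]}$ and exploiting the non-negativity $F\ge 0$ yields a lower bound of the shape
\begin{equation*}
\pi_f\!\left(x+c\,\sqrt{x}\log x\right)-\pi_f(x)\,\ge\,\frac{\delta_f\,c\,\sqrt{x}}{h(-D)}\,\hat{F}(0)-K\,\|F\|_1\,\sqrt{x}+o(\sqrt{x}),
\end{equation*}
so that the condition $\liminf\big(\pi_f(x+c\sqrt{x}\log x)-\pi_f(x)\big)/\sqrt{x}>\alpha$ is implied by the inequality $\delta_f\,c\,\hat{F}(0)/h(-D)-K\,\|F\|_1>\alpha$. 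Minimizing $c$ subject to this constraint becomes a Beurling-Selberg-type Fourier extremal problem; substituting the near-optimal $F$ from \cite{CPL}, with the appropriate rescaling to account for the degree-$2$ zero density, should produce the numerical constant $1.837\,(\delta_f+\alpha)\,h(-D)/\delta_f$.

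The main technical hurdle is the rigorous deployment of the explicit formula: I must track the archimedean factors, the conductor of each Hecke character, and the contribution of ramified primes, prime powers, and primes dividing $D$, uniformly in $\chi$ and $D$, so that they are absorbed into the $o(\sqrt{x})$ error. A secondary issue is to verify that the Fourier extremal problem occurring here matches, up to the degree-$2$ scaling factor, the one solved in~\cite{CPL}, so that the numerical constant $1.837$ emerges cleanly from the optimization.
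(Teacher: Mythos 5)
Your proposal follows a recognizably different (and ultimately insufficient) route from the paper's. The paper does not run a direct ``minorant plus triangle inequality over the zeros'' argument. Instead, following \cite{CMS}, it applies the averaged Guinand--Weil formula of Lemma \ref{lem:GWaverage} to the dilated--modulated test function $G(z)=\Delta F(\Delta z)\sigma^{iz}$ with $F$ bandlimited, argues by contraposition (assuming that along a sequence $x\to\infty$ at most $(\alpha+\varepsilon)\sqrt{x}$ primes represented by $f$ lie in $(x,x+c\sqrt{x}\log x]$), bounds the zero side by $h(-D)\frac{\log(1/2\pi\Delta)}{\pi}\norm{F}_1+O(1)$ via the zero-counting formula under GRH, and---crucially---controls the primes in the tail ranges $[\sigma e^{-2\pi\Delta N},\sigma e^{-2\pi\Delta}]\cup[\sigma e^{2\pi\Delta},\sigma e^{2\pi\Delta N}]$ using the Brun--Titchmarsh bound of Corollary \ref{cor:brun} with the explicit constant $28$. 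That is where Theorems \ref{16_43_02_08} and \ref{brun} pay off, and it is why the final optimization is over the functional $\big(F(0)-28\int_{[-1,1]^c}(\widehat{F}(t))_+\,\d t\big)/\norm{F}_1$, whose reciprocal is bounded by $0.91833$ for the explicit bandlimited $F$ in \eqref{eq:finalInequalityF3}; the constant $1.837$ is just $2\times 0.91833$ rounded up. Your proposal never invokes any Brun--Titchmarsh input, so this functional---and hence the number $1.837$---cannot emerge from it; the Beurling--Selberg-type problem you arrive at is a genuinely different extremal problem with a different (worse) answer.

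Beyond the missing ingredient, several steps as written would fail. First, a non-negative majorant of $\mathbf{1}_{[0,1]}$ gives an \emph{upper} bound for the prime count, not a lower one; a minorant is needed, and a minorant of an indicator cannot be simultaneously non-negative and bandlimited. Second, the argument of $\widehat{F}$ at a zero $\tfrac12+i\gamma_\chi$ should be $c\gamma_\chi\log x/(2\pi\sqrt{x})$, not $c\gamma_\chi\log x/(2\pi)$; with the correct scaling, compact support of $\widehat{F}$ only restricts the sum to $|\gamma_\chi|\ll \sqrt{x}/\log x$, a range containing $\asymp\sqrt{x}$ zeros per character, not finitely many. Third, as a consequence, the triangle-inequality bound on the zero sum is of order $h(-D)\,\sqrt{x}\,\log x$ (the same order as the main term), not the claimed $K\norm{F}_1\sqrt{x}$, so the ``lower bound of the shape'' you display does not follow, and the dependence on $\alpha$ and $\delta_f$ in the final constant would not assemble into $(\delta_f+\alpha)/\delta_f$. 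To repair the argument you would need to switch to the CMS contrapositive setup with the test function concentrated on the zero side, and import Corollary \ref{cor:brun} to handle the tail primes.
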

	\smallskip
	
\noindent  In particular, for a fixed primitive positive definite quadratic form $f$ of discriminant $-D$, there is always a prime number represented by $f$ in the interval $[x,\, x + 1.837\,h(-D)\sqrt{x}\log x],$ for $x$ sufficiently large. Then, we deduce the following conditional estimate for large gaps between primes represented by a quadratic form.\footnote{\,\,\, To the best of our knowledge, there is no other explicit result of this type in the literature.} 

\begin{corollary}\label{cor:gaps} Let $f$ be a primitive positive definite quadratic form of discriminant $-D$, and let $p_{n,f}$ be the $n$-th prime represented by $f$. Assume the Generalized Riemann Hypothesis for Hecke $L$-functions. Then, 
	\begin{equation} \label{22_17}
	\limsup_{n\to\infty}\dfrac{p_{n+1,f}-p_{n,f}}{\sqrt{p_{n,f}}\,\,\log p_{n,f}}< 1.837\,h(-D).
	\end{equation}
\end{corollary}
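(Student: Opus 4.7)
The plan is to deduce Corollary \ref{cor:gaps} directly from Theorem \ref{thm:cramer} by specializing $\alpha=0$, and to turn the $\liminf$ positivity of a counting function into a $\limsup$ upper bound on consecutive gaps $p_{n+1,f}-p_{n,f}$.

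First I would set $\alpha=0$ in Theorem \ref{thm:cramer}. The resulting inequality says that the infimum of the set of $c>0$ for which
$$
\liminf_{x\to\infty}\frac{\pi_f(x+c\sqrt{x}\log x)-\pi_f(x)}{\sqrt{x}}>0
$$
is strictly less than $1.837\,h(-D)$. In particular, we can pick a real number $c_0$ with this infimum $< c_0 < 1.837\,h(-D)$ such that the above $\liminf$ is positive. Consequently, there exists $x_0=x_0(f,c_0)\geq 1$ such that for every $x\geq x_0$ the interval $(x,\,x+c_0\sqrt{x}\log x]$ contains at least one prime represented by $f$.

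Next I would translate this into a statement about consecutive primes represented by $f$. Given $n$ large enough so that $p_{n,f}\geq x_0$, applying the previous paragraph with $x=p_{n,f}$ yields a prime $p$ represented by $f$ satisfying
$$
p_{n,f}<p\leq p_{n,f}+c_0\sqrt{p_{n,f}}\,\log p_{n,f}.
$$
By the definition of $p_{n+1,f}$ as the smallest prime represented by $f$ exceeding $p_{n,f}$, this forces $p_{n+1,f}\leq p_{n,f}+c_0\sqrt{p_{n,f}}\,\log p_{n,f}$. Dividing and taking the $\limsup$ over $n$ gives
$$
\limsup_{n\to\infty}\frac{p_{n+1,f}-p_{n,f}}{\sqrt{p_{n,f}}\,\log p_{n,f}}\leq c_0<1.837\,h(-D),
$$
which is precisely \eqref{22_17}.

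There is essentially no technical obstacle here: the corollary is a clean unpacking of the inequality in Theorem \ref{thm:cramer}. The only point requiring any care is ensuring the strict inequality in \eqref{22_17}, which is guaranteed by inserting the auxiliary constant $c_0$ strictly between the infimum provided by Theorem \ref{thm:cramer} and the explicit bound $1.837\,h(-D)$.
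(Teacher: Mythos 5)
Your proposal is correct and is exactly the deduction the paper intends: specialize Theorem \ref{thm:cramer} to $\alpha=0$, pick $c_0$ strictly between the infimum and $1.837\,h(-D)$ so that every interval $(x,\,x+c_0\sqrt{x}\log x]$ with $x$ large contains a prime represented by $f$, and apply this at $x=p_{n,f}$. Your insertion of the auxiliary constant $c_0$ to secure the strict inequality is the right (and only) point of care.
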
	

\smallskip

\begin{remark} Consider the quadratic form $f(u,v)=u^2+mv^2$, where $m$ is a positive integer. It is known that there are at most 66 positive integers $m$, such that $f$ represents
a prime $p$ if and only if $p$ belongs to a certain union of arithmetic progressions (see \cite{Cox}). For instance, when $m=1$, a classical theorem of Fermat states that a prime $p$ is represented by $f$, if and only if $p\equiv 1 \,( \mathrm{mod} \, 4)$. In this case, $D=4$, $h(-D)=1$, and we can recover the estimate \eqref{22_17} from \cite[Corollary 2]{CPL}, with the better constant $1.7062$. However, in general, the characterization of such primes is more subtle. For instance, consider the case $m=27$, where $D=108$ and $h(-D)=3$. A conjecture of Euler, proven by Gauss, states that $p$ has the form $u^2+27v^2$ if and only if both $p\equiv 1 \,( \mathrm{mod} \, 3)$, and $2$ is a cubic residue $(\mathrm{mod} \, p)$. This cannot be described by just unions of arithmetic progressions, so the results of \cite{CPL} no longer apply. 
\end{remark} 


	\smallskip
	
	\subsection{Outline of the proof}
	There are two main themes that will be ubiquitous throughout this paper. The first theme is the use of Fourier analysis, in the following way. We begin by finding a summation formula that connects our object of study with an arbitrary function and its Fourier transform. Then, we choose an appropriate test function that recovers the desired information in an optimized manner. The second is the well-known theme that propositions about quadratic forms can be stated in two other equivalent languages: ideals of number fields and lattices. We now discuss the main ideas in each theorem.
	
	
	\subsubsection{Congruence sums}
	The first step is obtaining a summation formula associated with the coefficients $r_f(n)$, relating it to an arbitrary test function and its Fourier transform. These types of formulas are well-known, and are equivalent to the modularity of certain theta series associated with a quadratic form $f$ and a discrete periodic function $\chi$ (see, for instance, \cite[p. 83]{IK} and \cite[p. 32]{123}), the latter which, in this case, allows us to filter out the congruence condition $\ell\,|\,n$. Since we were unable to find an explicit statement in the literature, we provide a proof of the specific summation formula that we require. 
	In Section \ref{sec:congruence}, we obtain the desired expression from an application of the classical Poisson summation formula for the lattice associated with the quadratic form $f$, combined with the discrete Fourier expansion of the periodic function $\chi$. In Section \ref{section3}, we prove Theorem \ref{16_43_02_08} following an approach outlined in \cite[Section 4.4]{IK}, which was applied to the Gauss circle problem in \cite[Corollary 4.9]{IK}. By choosing an appropriate test function in our summation formula and carrying out an asymptotic analysis (for instance, see Lemma \ref{23_54} in Appendix), we arrive at our new estimate for \eqref{eq:congruenceSum}. We highlight that a good explicit dependence on $\ell$ and the parameters of $f$ is required. This imposes significant technical difficulties when compared to the argument in \cite{IK}, and requires a careful analysis and delicate manipulations with a reduced quadratic form. 
	
	\begin{remark}
		Higher moments of $r_f(n)$ have also been studied by Blomer and Granville \cite{BG}. Later, Xu \cite{Xu} gave some improvements in their error terms. Additionally, he proved that, when $\ell=1$ in Theorem \ref{16_43_02_08}, the optimal error term in \eqref{15_56} satisfies $\Omega(D^{1/4}x^{1/4})$, which generalizes the classical omega result given originally by Hardy and Landau (see \cite{grosswald}).
	\end{remark}

	\subsubsection{Brun-Titchmarsh-type result} In Section \ref{sec:brun} we prove Theorem \ref{brun}, following Zaman's general outline in \cite{Zam}. Here the main strategy is an application of Selberg's sieve \cite[Theorem 7.1]{opera}, which transforms the problem of obtaining an upper bound for primes represented by $f$ in short intervals, into the problem of estimating the associated congruence sums \eqref{eq:congruenceSum}. 
	We remark that our extended range in Theorem \ref{brun} comes from the improved error term in our estimate of the congruence sums \eqref{eq:congruenceSum}, of the form $O_{f,\,\ell}(x^{1/3})$, given in Theorem \ref{16_43_02_08}. When $x$ is large compared to $\ell$, this improves the estimate $O_{f,\,\ell}(x^{1/2})$ given in \cite[Proposition 7.1]{Zam}, and it allows us to take intervals around $x$ of size as small as roughly $x^{1/3}$.
	
	\subsubsection{Cram\'er-type result}
We follow the argument of Carneiro, Milinovich, and Soundararajan in \cite[Section 5]{CMS}, to prove Theorem \ref{thm:cramer}. Here, we work with the language of ideals in imaginary quadratic fields. This allows us to use the machinery of Hecke characters and Hecke $L$-functions to obtain information about prime ideals in a given ideal class, and therefore, about prime numbers represented by a given quadratic form $f$. We first give some necessary background on Hecke $L$-functions, and their relation to quadratic forms, in Section \ref{sec:hecke}. The main ingredients in Theorem \ref{thm:cramer} are our version of the Brun-Titchmarsh inequality in Corollary \ref{cor:brun}, and the Guinand-Weil explicit formula for $L$-functions (see, for instance, \cite[Theorem 5.12]{IK} and \cite[Lemma 5]{CF}). 
	Then, we establish a version for Hecke $L$-functions that averages over all Hecke characters in a given congruence class group. We finish the proof of Theorem \ref{thm:cramer} in Section \ref{sec:cramer}. Following \cite{CMS}, we start with an arbitrary function $F$ in our version of the Guinand-Weil formula. The strategy then consists of taking a suitable dilation and modulation of $F$, so that we emphasize, in our explicit formula, intervals containing few prime numbers represented by $f$. We must then carry out an asymptotic analysis, and choose an appropriate function $F$ at the end, to conclude the desired result. In Section \ref{sec:uncertainty}, we discuss some qualitative aspects of the problem of choosing an optimal function $F,$ related to the uncertainty principle. 
	
	\smallskip 
	
	\subsection{Remarks and Notation} Let $f(u,v)=au^2+buv+cv^2$ be a positive definite quadratic form of discriminant $-D$, and without loss of generality assume that $a,\, c\geq 1$. In the case when $f$ is reduced, since $|b|\leq a\leq c$, we have that $a\ll \sqrt{D}$ and $D\geq 3$. We will use these frequently. Moreover, we have that $r_f(0)=1$, and $a$ is the smallest positive integer represented by $f$.
	
	\noindent The symbols $\ll$, $O(\,\cdot\,)$, $o(\,\cdot\,)$, and $\asymp$ are used in the standard way. In the subscript, we indicate the parameters on which the implicit constant may depend. We also denote $x_+:=\max\{x,0\}$. For a function $G\in L^1(\R^n)$, we define its Fourier transform by $$\widehat{G}(\xi)=\int_{\R^n}G(y)e^{-2\pi i\,\xi\,\cdot\, y}\d y.$$ For a radial function $G:\R^n\to\R$, we use the notation $G(x)=G(|x|)$. 
	
	\medskip
	
	\section{Summation formula for $r_f(n)$}\label{sec:congruence}
	
	Let $f(u,v)=au^2+buv+cv^2$ be a positive definite quadratic form of discriminant $-D$.  We recall that, for $n\geq 0$,
	\begin{align*} 
	r_f(n)=\#\left\{(u,v)\in \Z^2:f(u,v)=n\right\}.
	\end{align*} 
	
	\begin{lemma}\label{thm:poisson} 
		Let $G\in L^1(\R^2)$ be a radial continuous function. Suppose that
		\begin{align} \label{15_37}
		|G(x)|\ll \dfrac{1}{(1+|x|^2)^{1+\delta}}\,\,\,\mbox{and}\,\,\,\,\,\,\, |\widehat{G}(\xi)|\ll \dfrac{1}{(1+|\xi|^2)^{1+\delta}},
		\end{align}
		for some $\delta>0$. Then, for an integer $\ell\geq 1$, we have
		\begin{align}  \label{00_01}
		\begin{split}
		\sum_{\substack{n=0\\ \ell|n}}^\infty r_f(n)\,G(\sqrt{n}) = \,\,& \dfrac{2\,\widetilde{g}(\ell)}{\sqrt{D}}\,\sum_{n=0}^\infty r_f(n)\,\widehat{G}\bigg(\sqrt{\frac{4n}{D}}\bigg) \\
		&  +  O\Bigg(\frac{\widetilde{g}(\ell)\,\ell^2}{\sqrt{D}}\max_{\substack{0\le r,\,s<\ell\\ (r,s)\in \Z^2\setminus (0,0)}}
		\sum_{(u, v)\in \Z^2} \left|\widehat{G}\left(\sqrt{\frac{4f(u-r/\ell, v-s/\ell)}{D}}\right)\right|\Bigg),
		\end{split} 
		\end{align}
		where
		\begin{align} \label{17_34}
		\widetilde{g}(\ell)=\dfrac{1}{\ell^2}\,\#\left\{(u,v)\in \Z^2:0\leq u,v<\ell, \hspace{0.06cm} \mbox{and} \hspace{0.2cm} \ell\hspace{0.05cm}|\hspace{0.05cm}f(u,v)\right\}.
		\end{align}
		
	\end{lemma}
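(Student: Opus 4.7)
The plan is to apply Poisson summation on $\Z^2$ after decomposing the left-hand side of \eqref{00_01} over residue classes modulo $\ell$. Since $f$ has integer coefficients, the congruence $\ell \mid f(u,v)$ depends only on $(u,v) \bmod \ell$, so
\est{\sum_{\substack{n\ge 0\\ \ell \mid n}}r_f(n)\,G(\sqrt n)=\sum_{\substack{0\le r,s<\ell\\ \ell\mid f(r,s)}}\sum_{(u,v)\in\Z^2}G\bigl(\sqrt{f(r+\ell u,s+\ell v)}\bigr).}
The decay hypothesis \eqref{15_37} makes each inner sum absolutely convergent, which justifies the subsequent Poisson step.

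Next, I would compute the Fourier transform of $F_{r,s}(x,y):=G(\sqrt{f(r+\ell x,s+\ell y)})$. The substitution $w=r+\ell x$, $z=s+\ell y$ produces a phase $e^{2\pi i(mr+ns)/\ell}$ and an overall factor $\ell^{-2}$ times the Fourier transform of $(w,z)\mapsto G(\sqrt{f(w,z)})$ evaluated at $\xi=(m/\ell,n/\ell)^T$. To handle the latter I would write $f(w,z)=(w,z)M(w,z)^T$ with symmetric $M$ of determinant $D/4$, diagonalize by $(w',z')^T=N(w,z)^T$ where $N^TN=M$ (Jacobian $\sqrt D/2$), and invoke radiality of $G$ to reduce the integral to $(2/\sqrt D)\,\widehat G(\sqrt{\xi^T M^{-1}\xi})$. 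Using the explicit adjugate of $M$ gives $\xi^T M^{-1}\xi=4f^*(m,n)/(D\ell^2)$, where $f^*(u,v):=cu^2-buv+av^2$ is the adjoint quadratic form.

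Assembling these computations, the left-hand side of \eqref{00_01} becomes
\est{\frac{2}{\ell^2\sqrt D}\sum_{(m,n)\in\Z^2}S(m,n)\,\widehat G\Bigl(\sqrt{\tfrac{4f^*(m,n)}{D\ell^2}}\Bigr),\qquad S(m,n):=\!\!\sum_{\substack{0\le r,s<\ell\\ \ell\mid f(r,s)}}\!\!e^{2\pi i(mr+ns)/\ell}.}
To extract the main term I would isolate the frequencies $(m,n)\in\ell\Z^2$, for which $S(m,n)=\ell^2\widetilde g(\ell)$ and the argument of $\widehat G$ simplifies to $\sqrt{4f^*(m',n')/D}$. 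The identity $f^*(u,v)=f(v,-u)$ forces $r_{f^*}(n)=r_f(n)$, so this diagonal contribution equals $(2\widetilde g(\ell)/\sqrt D)\sum_{n\ge 0}r_f(n)\widehat G(\sqrt{4n/D})$, exactly matching the main term in \eqref{00_01}.

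For the off-diagonal frequencies I would write $(m,n)=(m_0+\ell m',n_0+\ell n')$ with $(m_0,n_0)\in[0,\ell)^2\setminus\{(0,0)\}$, use the trivial bound $|S(m,n)|\le \ell^2\widetilde g(\ell)$, and translate the resulting $f^*$-sums into $f$-sums via $f^*(u,v)=f(v,-u)$; this rewrites each inner sum as $\sum_{(u,v)\in\Z^2}|\widehat G(\sqrt{4f(u-r/\ell,v-s/\ell)/D})|$ under a bijective correspondence $(m_0,n_0)\leftrightarrow(r,s)\in[0,\ell)^2\setminus\{(0,0)\}$. Summing the $\ell^2-1$ classes and taking the worst case yields the stated error term. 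The only nontrivial steps are this $f^*\leftrightarrow f$ bookkeeping and ensuring all interchanges of summation and integration are legal; the latter follows uniformly from the decay in \eqref{15_37} combined with positive-definiteness of $f$, so I do not expect any serious obstacle beyond careful accounting.
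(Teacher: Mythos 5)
Your argument is correct and is essentially the paper's proof written in coordinates: the paper carries out the same computation in the language of the lattice $\Lambda$ attached to $f$ and its dual (Poisson summation for lattices combined with the finite Fourier expansion of the indicator of $\ell\mid|\omega|^2$ on $\Lambda/\ell\Lambda$), whereas you apply Poisson summation on $\Z^2$ to each residue class and then assemble the character sums $S(m,n)$, which amounts only to swapping the order of the finite and continuous summations. Your adjoint-form identity $f^*(u,v)=f(v,-u)$ and the evaluation $\xi^{T}M^{-1}\xi=4f^*(m,n)/(D\ell^2)$ correspond exactly to the paper's dual-lattice relation $|u\,\omega_1^*+v\,\omega_2^*|^2=4f(v,u)/D$, so no further comparison is needed.
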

	\begin{proof}
		We start associating a lattice $\Lambda\subset\R^2$, defined by the basis $\{\omega_1, \omega_2\}$, to the quadratic form $f$ in such a way that $a=|\omega_1|^2$, $b=2\,\omega_1 \cdot \omega_2$ and $c=|\omega_2|^2$. This implies that
		\begin{align} \label{14_36}
		|u\,\omega_1+v\,\omega_2|^2=f(u,v), \hspace{0.2cm} \mbox{and}\hspace{0.2cm}  r_f(n)=\#\{{\omega}\in\Lambda: |\omega|^2=n\},\hspace{0.1cm}  \mbox{for} \hspace{0.1cm} n\geq 0.
		\end{align}
		Let us consider the abelian group\footnote{\,\,\, We recall that the set $\Lambda/\ell\Lambda$ is defined by the equivalence classes in $\Lambda$ given by
			$\overline{\omega}=\{\omega + \ell\lambda: \lambda\in\Lambda\}$.} $(\Lambda/\ell\Lambda,+)$ of order $\ell^2$, and let $\chi:\Lambda/\ell\Lambda\to \C$ be the function defined by
		\begin{equation*}\chi(\overline{\omega}) = \left\{
		\begin{array}{ll}
		1,      & \mathrm{if\ } \ell\hspace{0.08cm}|\hspace{0.08cm}|\omega|^2; \\
		0     & \mathrm{otherwise.\ }
		\end{array}
		\right.
		\end{equation*}
		Then, since $G$ satisfies \eqref{15_37}, we have 
		\begin{align} \label{4}
		\sum_{\substack{{\omega}\in \Lambda  \vspace{0.07cm}\\ \ell\hspace{0.03cm}|\hspace{0.03cm}|\omega|^2 }}G({\omega})=\sum_{{\omega}\in \Lambda}\chi(\overline{\omega})G({\omega}).
		\end{align} 
		On the other hand, we consider the dual lattice $\Lambda^*=\{{\omega}^*\in \R^2: {\omega}\cdot{\omega}^* \in \Z, \hspace{0.05cm}\mbox{for all} \hspace{0.1cm} {\omega}\in\Lambda\}$. It has a basis $\{{\omega^*_1}, {\omega^*_2}\}$, given by $
		{\omega^*_1} = 4c\,{\omega_1}/D - 2b\,{\omega_2}/D$ and
		$
		{\omega^*_2} = {2b}\,{\omega_1}/D - 4a\,{\omega_2}/D.
		$
		This implies that \begin{align} \label{14_37}
		|u\,{\omega^*_1}+v\,{\omega^*_2}|^2=4f(v,u)/D, \hspace{0.2cm} \mbox{and}\hspace{0.2cm}  r_f(n)=\#\{{\omega}^*\in\Lambda^*: |\omega^*|^2=4n/D\},\hspace{0.1cm}  \mbox{for} \hspace{0.1cm} n\geq 0.
		\end{align}
		For each $\lambda^*$ in the set $P=\{sw_1^*+rw_2^*: 0\leq s,r<\ell \,\,\mbox{and}\,\, s,r\in\Z\}$, we define a character $e_{\lambda^*}$ in the group $(\Lambda/\ell\Lambda,+)$ by $e_{\lambda^*}(\overline{w})=e^{2\pi i\,\omega\,\cdot\,\lambda^*/\ell}$. Since the cardinality of $P$ is $\ell^2$, we conclude that $\{e_{\lambda^*}\}_{\lambda^*\in P}$ are all the characters in the group $(\Lambda/\ell\Lambda,+)$ (see \cite[Theorem 2.5 in Chapter 7]{SS}). Now, we define the Fourier coefficient of $\chi$ with respect to $e_{\lambda^*}$, by
		$$
		\widehat{\chi}(e_{\lambda^*})=\dfrac{1}{\ell^2}\sum_{\overline{\omega}\in \Lambda/\ell\Lambda}\chi(\overline{\omega})\,e^{-2\pi i\,\omega\,\cdot\,\lambda^*/\ell}.
		$$
		Then, the Fourier inversion formula (see \cite[Theorem 2.7 in Chapter 7]{SS}) yields
		\begin{align} \label{18_45} 
		\chi(\overline{w})=\sum_{\lambda^*\in P}\widehat{\chi}(e_{\lambda^*})\,e^{2\pi i\,\omega\,\cdot\,\lambda^*/\ell}.
		\end{align}
		Combining \eqref{4}, \eqref{18_45} and Fubini's theorem, we get
		\begin{align*} 
		\sum_{\substack{\omega\in\Lambda\\ \ell\hspace{0.03cm}|\hspace{0.03cm}|\omega|^2}} G(\omega) =\sum_{\lambda^*\in P}\widehat{\chi}(e_{\lambda^*})\Bigg(\sum_{w\in\Lambda} G(w)\,e^{2\pi i\,\omega\,\cdot\,\lambda^*/\ell}\Bigg).
		\end{align*}
		Recalling that $vol(\Lambda^*)=\sqrt{4/D}$, we use the Poisson summation formula for lattices in the above inner sum (since $G$ satisfies \eqref{15_37}) to find that
		\begin{align} \label{17_43}
		\sum_{\substack{\omega\in\Lambda\\ \ell\hspace{0.03cm}|\hspace{0.03cm}|\omega|^2}} G(\omega) =\sqrt{\dfrac{4}{D}}\sum_{\lambda^*\in P}\widehat{\chi}(e_{\lambda^*})\sum_{w^*\in\Lambda^*} \widehat{G}\bigg(w^*-\dfrac{\lambda^*}{\ell}\bigg).
		\end{align}
		On the other hand, if we define
		\begin{align*} 
		\widetilde{g}(\ell)=\dfrac{1}{\ell^2}\,\#\left\{\overline{\omega}\in \Lambda/\ell\Lambda:  \ell\hspace{0.08cm}|\hspace{0.08cm}|\omega|^2\right\},
		\end{align*}
		it is clear that $\widehat{\chi}(e_{0^*})= \widetilde{g}(\ell)$ and $|\widehat{\chi}(e_{\lambda^*})|\leq \widetilde{g}(\ell)$. Therefore, isolating the point $\lambda^*=0$ in \eqref{17_43} gives us
		\begin{align*} 
		\sum_{\substack{\omega\in\Lambda\\ \ell\hspace{0.03cm}|\hspace{0.03cm}|\omega|^2}} G(\omega) & = \frac{2\,\widetilde{g}(\ell)}{\sqrt{D}}\sum_{\omega^*\in\Lambda^*} \widehat{G}(\omega^*) + \frac{2}{\sqrt{D}}\sum_{\lambda^*\in P\setminus\{0^*\}}\widehat{\chi}(e_{\lambda^*})\sum_{w^*\in\Lambda^*} \widehat{G}\bigg(w^*-\dfrac{\lambda^*}{\ell}\bigg) \\
		& =\frac{2\,\widetilde{g}(\ell)}{\sqrt{D}}\sum_{n=0}^\infty r_f(n)\,\widehat{G}\bigg(\sqrt{\frac{4n}{D}}\bigg) + O\left(\frac{\widetilde{g}(\ell)\,\ell^2}{\sqrt{D}}\,\max_{\substack{0\le r,\,s<\ell\\ (r,s)\in \Z^2\setminus (0,0)}}
		\sum_{(u, v)\in \Z^2} \left|\widehat{G}\left(\sqrt{\frac{4f(u-r/\ell, v-s/\ell)}{D}}\right)\right|\right),
		\end{align*}
		where we have used \eqref{14_37} and the fact that $\widehat{G}$ is radial. We conclude the proof using \eqref{14_36}.
	\end{proof}

\noindent The following technical lemma will help us estimate the error term. We compare a small translation of $f$ with the untranslated value, outside of a finite number of exceptions.
	\begin{lemma}\label{lem:translations}
		Suppose that $f$ is reduced. Let $\ell, r, s$ be integers such that $\ell\ge 1$ and $0\le r,s < \ell$. Then,
		\[\# \bigg\{(u,v)\in \Z^2: f\left(u-\frac{r}{\ell},v-\frac{s}{\ell}\bigg)<\frac{f(u,v)}{2}\right\}\ll \frac{\sqrt{D}}{a}.
		\]
	\end{lemma}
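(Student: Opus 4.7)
The plan is to use the triangle inequality for the norm $\|\cdot\|_f := \sqrt{f(\cdot)}$ that $f$ induces on $\R^2$ to confine $(u,v)$ to a small ellipse, and then to count the lattice points inside it using the reduced-form inequalities $|b| \le a \le c$ (equivalently, $a^2 \le D/3$).

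First, I would rewrite the hypothesis. Setting $P = (u,v)$ and $Q = (r/\ell, s/\ell)$, the inequality $f(P - Q) < f(P)/2$ is exactly $\|P - Q\|_f < \|P\|_f/\sqrt{2}$. The triangle inequality then yields
\[
\|P\|_f \;\le\; \|P - Q\|_f + \|Q\|_f \;<\; \frac{\|P\|_f}{\sqrt{2}} + \|Q\|_f,
\]
so $\|P\|_f \le \|Q\|_f/(1 - 1/\sqrt{2})$. Since $0 \le r, s < \ell$ we have $|r/\ell|, |s/\ell| \le 1$, and since $f$ is reduced, $f(r/\ell, s/\ell) \le a + |b| + c \le 3c$. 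Hence every solution $(u,v)$ lies in the ellipse $\{f \le K c\}$ for some absolute constant $K$.

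Next, I would count lattice points $(u,v) \in \Z^2$ with $f(u,v) \le K c$. Completing the square in $u$,
\[
f(u,v) \;=\; a\left(u + \frac{bv}{2a}\right)^2 + \frac{D v^2}{4a},
\]
so the condition forces $D v^2/(4a) \le K c$, i.e., $v^2 \le 4aKc/D$. Using $4ac = D + b^2 \le D + a^2 \le 4D/3$, this forces $|v| = O(1)$. For each such integer $v$ we then need $|u + bv/(2a)| \le \sqrt{Kc/a}$, and since $c \ge a$ there are $O(\sqrt{c/a})$ admissible integers $u$. Summing over the $O(1)$ values of $v$ gives a total of $O(\sqrt{c/a})$ pairs; using $\sqrt{c/a} = \sqrt{ac}/a \le \sqrt{D}/a$ (from $4ac \le 4D/3$ once more), the claimed bound follows.

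The step that deserves some care is the localization: one must notice that the strict factor $1/2$ on the right-hand side of the hypothesis translates into a genuine contraction by $1/\sqrt{2}$ in the $f$-norm, which is what lets the triangle inequality produce a bound on $\|P\|_f$ that is independent of $P$. Once $(u,v)$ has been confined to the thin ellipse $\{f \le Kc\}$, the rest is essentially mechanical: the reduced-form inequalities make the ellipse so narrow (of width $O(1)$ in the short direction) that the two-dimensional lattice-point count collapses to a one-dimensional one, yielding exactly the order $\sqrt{D}/a$.
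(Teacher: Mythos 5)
Your proof is correct. The overall strategy coincides with the paper's (confine the solutions to an ellipse $\{f\le Kc\}$ of bounded ``area'' and then count lattice points by completing the square), but your localization step is genuinely different and cleaner. The paper establishes containment in the auxiliary set $A=\{(u,v): f(u-2r/\ell,\,v-2s/\ell)<6c\}$ by a contrapositive argument built on the polarization identity $f(u-x,v-y)=f(u,v)+f(x,y)-2aux-buy-bxv-2cvy$, which requires the somewhat unmotivated doubled translation $(2r/\ell,2s/\ell)$ and a chain of algebraic rearrangements. You instead observe that $\sqrt{f}$ is the Euclidean norm pulled back through the lattice embedding, so the hypothesis reads $\|P-Q\|_f<\|P\|_f/\sqrt2$ and the triangle inequality immediately gives $\|P\|_f\le \|Q\|_f/(1-1/\sqrt2)$, hence $f(u,v)\ll c$ since $f(r/\ell,s/\ell)\le a+|b|+c\le 3c$. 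This makes transparent why the strict constant $1/2$ matters (it must be $<1$ after taking square roots) and also simplifies the counting: you count lattice points directly in $\{f\le Kc\}$, whereas the paper rescales to $\{f\le 6c\ell^2\}$ and must then impose congruence conditions modulo $\ell$ to recover the same bound. Your closing estimates ($v^2\le 4Kac/D\ll 1$ from $4ac=D+b^2\le 4D/3$, then $O(\sqrt{c/a})=O(\sqrt{ac}/a)\ll\sqrt{D}/a$ choices of $u$) are exactly the Blomer--Granville count the paper invokes, so nothing is lost; if anything, your version is the one I would recommend keeping.
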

	\begin{proof} Define the set
		$$
		A=\bigg\{(u,v)\in \Z^2: f\bigg(u-\frac{2r}{\ell},v-\frac{2s}{\ell}\bigg)< 6c
		\bigg\}.
		$$
		First we show that
		\begin{align} \label{1_15}
		\bigg\{(u,v)\in \Z^2: f\bigg(u-\frac{r}{\ell},v-\frac{s}{\ell}\bigg)<\frac{f(u,v)}{2}
		\bigg\}\subset A.
		\end{align}
		Indeed, if $(u,v)\in A^c$, using that $f$ is reduced, we have
		\begin{align}\label{eq:11_8_11_34}
		\frac{2\,f(r,s)}{\ell^2} \le 6c \le f\left(u-\frac{2r}{\ell},v-\frac{2s}{\ell}\right).
		\end{align}
		Applying the identity 
		\begin{equation}\label{eq:11_8_11_40}
		f(u-x,v-y)= f(u,v) + f(x,y) - 2aux - buy - bxv -2cvy
		\end{equation}
		in \eqref{eq:11_8_11_34} yields
		\begin{align*}
		&\frac{2\,f(r,s)}{\ell^2} \le f(u,v) + \frac{4\,f(r,s)}{\ell^2} -\frac{4aur+2bus+2bvr+4cvs}{\ell}.
		\end{align*}
		Then,
		$$-\frac{f(r,s)}{\ell^2}+ \frac{2aur+bus+bvr+2cvs}{\ell} \le \frac{f(u,v)}{2}.
		$$	
		Using this inequality and identity \eqref{eq:11_8_11_40}, we see that
		\begin{align*}
		f\bigg(u-\frac{r}{\ell}, v-\frac{s}{\ell}\bigg) &= f(u,v) + \frac{f(r,s)}{\ell^2} -\frac{2aur+bus+bvr+2cvs}{\ell} \ge \frac{f(u,v)}{2}. 
		\end{align*}
		This shows \eqref{1_15}, and it now suffices to obtain an upper bound for the cardinality of $A$. Observe that
		\begin{align*}
		\#A & = \# \{(u,v)\in \Z^2: f(u\ell-2r,v\ell-2s)<6c\ell^2\}\\
		& \leq \#\{(u,v)\in\Z^2:\, f(u,v)\le 6c\ell^2,\, u\equiv -2r \,(\mathrm{mod}\, \ell), \, v\equiv -2s\,(\mathrm{mod}\, \ell)
		\}.
		\end{align*}
		We now proceed with the well-known argument in \cite[Lemma 3.1]{BG} as follows.
		Rewriting $f(u,v)$, we must bound the number of integer solutions to the inequality $(2au+bv)^2 + Dv^2\le 24ac\ell^2.$ A solution $(u,v)$ must satisfy that $|v|\ll \ell$ (where we used that $ac\ll D),$ and that 
		\[ \frac{-\sqrt{24ac\ell^2-Dv^2}-bv}{2a} \le u \le \frac{\sqrt{24ac\ell^2-Dv^2}-bv}{2a}.
		\]
		Therefore, $v$ belongs to an interval of size at most $\ll \ell$, and $u$ belongs to an interval of size at most $\ll \sqrt{D}\,\ell/a$ (once again using that $ac\ll D$). Hence, the number of solutions $(u,v)$ with the desired congruences modulo $\ell$ is at most $\ll \sqrt{D}/a$.
	\end{proof}

	\section{Proof of Theorem \ref{16_43_02_08}} \label{section3}
	In \cite[Proposition 7.1]{Zam}, Zaman used a lattice point counting argument, via geometry of numbers methods, to estimate \eqref{eq:congruenceSum}. He established the following: for a primitive positive definite quadratic form $f$ of discriminant $-D$, and a squarefree integer $\ell\geq 1$, we have 
	\begin{align} \label{15_27}
	\sum_{\substack{n\le x\\ \ell|n}} r_f(n) = \frac{2\pi}{\sqrt{D}}\,g(\ell)\,x + 
	O\left(\frac{\tau_3(\ell)\,a^{1/2}}{D^{1/2}}x^{1/2} + \frac{\tau(\ell)\,\tau_3(\ell)\,\ell^{1/2}\,D^{1/4}}{a^{3/4}}x^{1/4}+1 \right),
	\end{align}
	for $x\geq 1$. Here, $g$ is a multiplicative function satisfying 
	\begin{align}\label{15_04}
	g(p)=\dfrac{1}{p}\bigg(1+\chi(p)-\dfrac{\chi(p)}{p}\bigg)
	\end{align}
	for all primes $p$, $\chi=\chi_{-D}$ is the corresponding Kronecker symbol, and $\tau_3$ is the $3$-divisor function. The main goal here is to improve the error term in \eqref{15_27}, reducing $x^{1/2}$ to $x^{1/3}$.
	
	\subsection{Proof of Theorem \ref{16_43_02_08}} We partially follow the approach outlined in \cite[Corollary 4.9]{IK}. Assume that $x\geq 1$ is a real number and $\ell\geq 1$ is an integer. Let $1\leq y\leq x^{1/2}$ be a parameter to be chosen. 
	We will apply Lemma \ref{thm:poisson} to the radial function $G:\R^2\to\R$ supported in $0\leq r\leq (x+y)^{1/2}$, and defined by
	$$
	G_{x,y}(r)=G(r):=\min\bigg\{r^2,1,\dfrac{x+y-r^2}{y}\bigg\}.
	$$
	By Lemma \ref{23_54}, the function $G$ satisfies the conditions \eqref{15_37}, with the bounds
	\begin{equation} \label{eq:H111}
	\big|\widehat{G}(\sqrt{\xi})\big| \ll 
	\dfrac{x^{1/4}}{|\xi|^{3/4}} \hspace{0.2cm} \mbox{for} \hspace{0.2cm} {|\xi|\neq 0},  \hspace{0.2cm} \mbox{and}  \hspace{0.2cm}  \big|\widehat{G}(\sqrt{\xi})\big| \ll \dfrac{x^{3/4}}{y|\xi|^{5/4}} \hspace{0.2cm} \mbox{for} \hspace{0.2cm} {|\xi|\geq 1}.
	\end{equation}
	Now, let us analyze the right-hand side of \eqref{00_01}.  We recall that $r_f(0)=1$, and by Lemma \ref{23_54}, we know that $\widehat{G}(0)=\pi x + O(y)$. Letting $z=Dx/y^2$ (note that $4z/D\geq 1$), and using the estimates in \eqref{eq:H111} we obtain
	\begin{align*} 
	\Bigg|\sum_{n=1}^\infty r_f(n)\,\widehat{G}\bigg(\sqrt{\frac{4n}{D}}\bigg)\Bigg| & = \Bigg|\sum_{a\leq n\leq z} r_f(n)\,\widehat{G}\bigg(\sqrt{\frac{4n}{D}}\bigg)+\sum_{n>z} r_f(n)\,\widehat{G}\bigg(\sqrt{\frac{4n}{D}}\bigg)\Bigg| \\
	& \ll D^{3/4}x^{1/4}\sum_{a\leq n\leq z}\dfrac{r_f(n)}{n^{3/4}} +\dfrac{D^{5/4}x^{3/4}}{y}\sum_{n>z} \dfrac{r_f(n)}{n^{5/4}}.
	\end{align*}
	To estimate the sums above, we use integration by parts and the well-known result (see \cite[Lemma 3.1]{BG})
	\begin{equation*} 
	\sum_{a\leq n\leq x}r_f(n)= \frac{2\pi x}{\sqrt{D}} + O\left(\sqrt{\frac{x}{a}}\right),
	\end{equation*}
	for $x\geq a$. Therefore,
	\begin{align*}  
	\Bigg|\sum_{n=1}^\infty r_f(n)\,\widehat{G}\bigg(\sqrt{\frac{4n}{D}}\bigg)\Bigg| &  \ll \dfrac{D^{3/4}x^{1/4}}{a^{3/4}}+ \dfrac{D^{1/2}x^{1/2}}{y^{1/2}}.
	\end{align*}
	We now estimate the translated terms in \eqref{00_01}. Let $r,\, s$ be integers such that $0\le r,\, s<\ell$ and $(r,s)\neq (0,0)$. Let
	\[B:=\bigg\{(u,v)\in \Z^2: f\bigg(u-\frac{r}{\ell},v-\frac{s}{\ell}\bigg)<\frac{f(u,v)}{2}
	\bigg\}\cup\{(0,0)\}
	\]
	be the set in the statement of Lemma \ref{lem:translations} (with the point $(0,0)$ included). First, let us bound the sum over $(u,v)\in B$. We will use the fact that $f(u-r/\ell, v-s/\ell)\ge a/{\ell^2}$ for all $(u,v)\in \Z^2$, and Lemma \ref{lem:translations}. Then, recalling that $a\ll D^{1/2}$ and using \eqref{eq:H111}, we see that
	\begin{align*}
	\sum_{(u,\, v)\in B} \left|\widehat{G}\left(\sqrt{\frac{4f(u-r/\ell, v-s/\ell)}{D}}\right)\right|
	&\ll (\#B) \max_{(u,v)\in \Z^2}\left\{\frac{x^{1/4}D^{3/4}}{f(u-r/\ell, v-s/\ell)^{3/4}}\right\}\ll \frac{\ell^{3/2}D^{5/4}x^{1/4}}{a^{7/4}}.
	\end{align*}
	We analyze the sum over $(u,v)\in B^c$, by splitting it once more into the sets ${B^c\cap \{f(u,v)\leq z\}}$ and ${ B^c\cap \{f(u,v)>z\}}$. We estimate it using \eqref{eq:H111} as follows:
	\begin{align*}
	&\sum_{(u,\, v)\in B^c} \left|\widehat{G}\left(\sqrt{\frac{4f(u-r/\ell, v-s/\ell)}{D}}\right)\right|
	\\
	&\,\,\,\, \,\,\,\, \ll  \sum_{(u,\, v)\in B^c\cap \{f(u,v)\le z\}}
	\frac{D^{3/4}x^{1/4}}{f(u-r/\ell, v-s/\ell)^{3/4}}	+ \sum_{(u,\, v)\in  B^c\cap \{f(u,v)>z\}} \frac{D^{5/4}x^{3/4}}{y\,f(u-r/\ell, v-s/\ell)^{5/4}}\\
	&\,\,\,\, \,\,\,\, \ll D^{3/4}x^{1/4}\sum_{(u,\, v)\in  B^c\cap\{f(u,v)\le z\}}
	\frac{1}{f(u, v)^{3/4}} +\dfrac{D^{5/4}x^{3/4}}{y} \sum_{\{f(u,v)>z\}} \frac{1}{f(u, v)^{5/4}}\\
	&\,\,\,\, \,\,\,\, \ll D^{3/4}x^{1/4}\sum_{a\leq n\leq z}
	\frac{r_f(n)}{n^{3/4}} +\dfrac{D^{5/4}x^{3/4}}{y} \sum_{n>z} \frac{r_f(n)}{n^{5/4}}\ll  \dfrac{D^{3/4}x^{1/4}}{a^{3/4}}+ \dfrac{D^{1/2}x^{1/2}}{y^{1/2}}.
	\end{align*}
	Therefore, since $G(0)=0$, we combine all the terms in \eqref{00_01} to find, for $1\leq y\leq x^{1/2}$,
	\begin{align}  \label{1_16}
	\begin{split} 
	\sum_{\substack{n=1\\ \ell|n}}^\infty r_f(n)\,G_{x,y}(\sqrt{n}) = \frac{2\pi}{\sqrt{D}} \,\widetilde{g}(\ell)\,x+ O\Bigg(\widetilde{g}(\ell)\bigg(\frac{\ell^{7/2}D^{3/4}x^{1/4}}{a^{7/4}}+ \dfrac{\ell^2x^{1/2}}{y^{1/2}}+\dfrac{y}{D^{1/2}}\bigg)\Bigg),
	\end{split}
	\end{align}
	where $\widetilde{g}(\ell)$ was defined in \eqref{17_34}. Since $G_{x,y}(r)\geq 0$, we truncate the sum on the left-hand side of \eqref{1_16} over $1\leq n\leq x$. Using the definition of $G$, this implies that 
	\begin{align}  \label{1_34}
	\sum_{\substack{1\leq n \leq x\\ \ell|n}} r_f(n)\leq  \frac{2\pi}{\sqrt{D}} \,\widetilde{g}(\ell)\,x+ O\Bigg(\widetilde{g}(\ell)\bigg(\frac{\ell^{7/2}D^{3/4}x^{1/4}}{a^{7/4}}+ \dfrac{\ell^2x^{1/2}}{y^{1/2}}+\dfrac{y}{D^{1/2}}\bigg)\Bigg).
	\end{align}
	To obtain the inverse inequality, we replace $x$ by $x-y$ (in this case $1\leq y\leq (x-y)^{1/2}$) in \eqref{1_16} and use the fact that 
	$$
	\sum_{\substack{n=1\\ \ell|n}}^\infty r_f(n)\,G_{x-y,y}(\sqrt{n}) = \sum_{\substack{1\leq n\leq x\\ \ell|n}} r_f(n)\,G_{x-y,y}(\sqrt{n}) \leq \sum_{\substack{1\leq n\leq x\\ \ell|n}} r_f(n).
	$$
	This yields
	\begin{align} \label{1_311}
	\begin{split} 
	\sum_{\substack{1\leq n\leq x\\ \ell|n}} r_f(n)\geq  \frac{2\pi}{\sqrt{D}} \,\widetilde{g}(\ell)\,x+ O\Bigg(\widetilde{g}(\ell)\bigg(\frac{\ell^{7/2}D^{3/4}(x-y)^{1/4}}{a^{7/4}}+ \dfrac{\ell^2(x-y)^{1/2}}{y^{1/2}}+\dfrac{y}{D^{1/2}}\bigg)\Bigg).
	\end{split}
	\end{align}
	Then, choosing $y=D^{1/3}x^{1/3}/2^{1/2}$ in \eqref{1_34} and \eqref{1_311}, we conclude\footnote{\,\,\, Note that, so far, $\ell\geq 1$ is not necessarily a squarefree integer. Using the estimate $|\widetilde{g}(\ell)|\leq 1$, we obtain a general version of Theorem \ref{16_43_02_08}.} that, for $x\geq D^2$
	\begin{align}  \label{15_342}
	\begin{split} 
	\sum_{\substack{1\leq n\leq x\\ \ell|n}}r_f(n)= \frac{2\pi}{\sqrt{D}} \,\widetilde{g}(\ell)\,x+ O\Bigg(\dfrac{\widetilde{g}(\ell)\,\ell^{2}x^{1/3}}{D^{1/6}}+\frac{\widetilde{g}(\ell)\,\ell^{7/2}D^{3/4}x^{1/4}}{a^{7/4}}\Bigg).
	\end{split}
	\end{align}
	Now, if we compare the main terms in \eqref{15_27} and \eqref{15_342}, we plainly see that $\widetilde{g}(\ell)=g(\ell)$ for any $\ell$ squarefree integer. Also note that, for each prime $p$, \eqref{15_04} implies that $|g(p)|\leq 2/p$. Since $g$ is a multiplicative function, for a squarefree integer $\ell=p_1\ldots p_k$, we have 
	\begin{align*}  
	|\widetilde{g}(\ell)|=|g(\ell)|=|g(p_1\ldots p_k)|\leq \dfrac{2^{k}}{p_1\ldots p_k}=\dfrac{\tau(\ell)}{\ell}.
	\end{align*}
	Inserting this estimate in the error term of \eqref{15_342}, we conclude.
	\qed
	

	\begin{remark}
		We highlight that the asymptotic formula \eqref{15_56} in Theorem \ref{16_43_02_08} holds for $x\geq D^2$. We can establish a similar result for $x\geq 3$, if we choose $y=x^{1/3}/2^{1/2}$ in the previous proof. Then, for $x\geq 3$,
		\begin{align*} 
		\sum_{\substack{1\leq n\leq x\\ \ell|n}}r_f(n)= \frac{2\pi}{\sqrt{D}} \,g(\ell)\,x+ O\Bigg(\tau(\ell)\,\ell\,x^{1/3}+\frac{\tau(\ell)\,\ell^{5/2}D^{3/4}}{a^{7/4}}x^{1/4}\Bigg).
		\end{align*}
		Also, note that the above formula can be extended to any primitive positive definite quadratic form, not necessarily reduced, by considering $a=1$ in the error term.
	\end{remark}
	
	\medskip

	\section{Proof of Theorem \ref{brun}}\label{sec:brun} 
	Let $f(u,v)=au^2+buv+cv^2$ be a reduced positive definite quadratic form of discriminant $-D$, and fix $0<\varepsilon<1/20$. To prove Theorem \ref{brun}, we follow the idea developed in \cite{Zam}. Let $\chi=\chi_{-D}(\cdot):=\big(\frac{-D}{\cdot}\big)$ denote the corresponding Kronecker symbol, which is a quadratic Dirichlet character, and let $L(s,\chi)$ be the associated $L$-function. We remark that, in the ranges \eqref{15_555} and \eqref{15_556}, using the fact that $f$ is reduced, we have that $x\geq x-y\geq D^2$.
	
	Let us define $w=\#\{(p,q,r,s)\in \Z^4: ps-qr=1 \hspace{0.1cm} \mbox{and} \hspace{0.1cm} f(u,v)= f(pu+qv,ru+sv)\}.$
	By \cite[p. 63 Satz 2]{Zag}, we have that $w=6$ when $D=3$, $w=4$ when $D=4$, and $w=2$ otherwise. This implies that, if $p$ is a prime represented by $f$, then it is represented with multiplicity $\delta_f^{-1}w$, where $\delta_f$ is defined in \eqref{20_39}. The number $w$ is related to the class number $h(-D)$ through the class number formula (see \cite[p. 72, Staz 5]{Zag}):
	\begin{align} \label{8.4}
	h(-D)=\dfrac{w\sqrt{D}}{2\pi}L(1,\chi).
	\end{align} 
	We start by dividing into cases, depending on the size of $L(1,\chi)$.
\subsection{The case $L(1,\chi)\geq (\log y)^{-2}$} \label{17_04} Let $z\geq 2$ be a parameter to be chosen later, and define $P=\prod_{p\leq z}p$. Then, one can see that
	\begin{align} \label{8.6}
	\dfrac{w}{\delta_f}\big(\pi_f(x)-\pi_f(x-y)\big)\leq \displaystyle\sum_{\substack{x-y<n\leq x\\(n,P)=1}}r_f(n) + \dfrac{w}{\delta_f}\pi(z).
	\end{align} 
	Let us bound the sieved sum on the right-hand side of \eqref{8.6}. For a squarefree integer $\ell\geq 1$, Theorem \ref{16_43_02_08} gives us
	\begin{align}  \label{18_56}
	\sum_{\substack{x-y< n\le x\\ \ell|n}} r_f(n) = \frac{2\pi \,y}{\sqrt{D}}\,g(\ell) + E_\ell,
	\end{align}
	where
	\begin{align} \label{15_34}
	|E_\ell| \ll \dfrac{\tau(\ell)\,\ell}{D^{1/6}}x^{1/3}+\frac{\tau(\ell)\,\ell^{5/2}D^{3/4}}{a^{7/4}}x^{1/4}.
	\end{align}
	Then, \eqref{18_56} and a direct application of Selberg's upper bound sieve (see \cite[Theorem 7.1 and Eq. (7.32)]{opera} with level of distribution $z^2$ give us
	\begin{align} \label{8.71}
	\displaystyle\sum_{\substack{x-y<n\leq x\\(n,P)=1}}r_f(n)  \leq  \dfrac{2\pi\, y}{\sqrt{D}}\,J^{-1}+ \displaystyle\sum_{\substack{\ell | P\\\ell<z^2}}\tau_3(\ell)|E_\ell|,
	\end{align}
	where $
	J=\sum_{\ell<z, \, \ell | P} h(\ell)$, and $h$ is a multiplicative function defined by 
	$$
	h(\ell)=\prod_{p|\ell}\dfrac{g(p)}{1-g(p)}.
	$$
	To bound the main term in \eqref{8.71}, we treat $g$ as a completely multiplicative function, to obtain (see \cite[Eq. (8.8)]{Zam})
	\begin{align} \label{8.8}
	J\geq \displaystyle\sum_{\ell<z}g(\ell). 
	\end{align}
	To bound the sum on the right-hand side of \eqref{8.71}, we use \eqref{15_34}, and integration by parts with the estimate (see \cite[Theorem 1]{florian})
	$$
	\sum_{n\leq x}\tau_3(n)\tau(n)\ll x(\log x)^5.
	$$
	It follows that
	\begin{align} \label{23_21}  
	\displaystyle\sum_{\substack{\ell | P\\\ell<z^2}}\tau_3(\ell)|E_\ell| &\ll \dfrac{x^{1/3}}{D^{1/6}}\displaystyle\sum_{\ell<z^2}\tau_3(\ell)\tau(\ell)\,\ell+\dfrac{D^{3/4}x^{1/4}}{a^{7/4}}\sum_{\ell<z^2}\tau_3(\ell)\tau(\ell)\,\ell^{5/2} \nonumber \\
	&  \ll \dfrac{x^{1/3}z^4 (\log z)^5}{D^{1/6}} + \dfrac{D^{3/4}x^{1/4}z^7 (\log z)^5}{a^{7/4}}.
	\end{align}
	We now combine \eqref{8.6}, \eqref{8.71}, \eqref{8.8}, \eqref{23_21}, the prime number theorem, and the fact that $x\geq D^{2}$. We obtain
	\begin{align} \label{23_27}
	\dfrac{w}{\delta_f}\big(\pi_f(x)-\pi_f(x-y)\big)\leq \dfrac{2\pi \,y}{\sqrt{D}\displaystyle\sum_{\ell<z}g(\ell)} + O\bigg(\dfrac{x^{1/3}z^4 (\log z)^5}{D^{1/6}} + \dfrac{D^{3/4}x^{1/4}z^7 (\log z)^5}{a^{7/4}}\bigg).
	\end{align} 
	To analyze the main term in the right-hand side of \eqref{23_27}, we use some bounds given in \cite{Zam}. Combining Lemma 4.3, Lemma 4.4 and Lemma 8.2 of \cite{Zam}, for any fixed $0<\varepsilon<1/20$, it follows that 
	\begin{align}  \label{2_582}
	\displaystyle\sum_{\ell<z}g(\ell) \geq L(1,\chi)\log z -\bigg(\dfrac{1}{8}+\varepsilon\bigg)L(1,\chi)\log D + O\big(L(1,\chi)+z^{-\varepsilon^2/2}\big),
	\end{align}
	for any $z\geq 1$ such that $z\gg D^{1/4+\varepsilon}$.
	
	\subsubsection{The first range \eqref{15_555}} We recall that, in the range
	\begin{align*} 
	\dfrac{D^{2}}{a}\,x^{1/3+\varepsilon}\leq y\leq x^{4/9},
	\end{align*}
	we have $x\geq D^{18}/{a^{9}}\geq D^{13}$, since $f$ is reduced. Now, we choose
	\begin{align*} 
	z = \dfrac{a^{1/4} y^{1/4}(\log y)^{-2}}{D^{5/24}x^{1/12}}+2.
	\end{align*}
	Note that $\log z \asymp \log y$. Then, from \eqref{23_27} we see that
	\begin{align*} 
	\dfrac{w}{\delta_f}\big(\pi_f(x)-\pi_f(x-y)\big)\leq \dfrac{2\pi\, y}{\sqrt{D}\displaystyle\sum_{\ell<z}g(\ell)} + O\bigg(\dfrac{y}{\sqrt{D}(\log y)^3}\bigg).
	\end{align*}
	Using the class number formula \eqref{8.4} and the well-known estimate $L(1,\chi)\ll\log D\ll \log y$, we get
	\begin{align} \label{15_44} 
	\pi_f(x)-\pi_f(x-y)\leq \dfrac{\delta_f\,y}{h(-D)(L(1,\chi))^{-1}\displaystyle\sum_{\ell<z}g(\ell)} + O\bigg(\dfrac{\delta_f\,y}{h(-D)(\log y)^2}\bigg).
	\end{align}
	On the other hand, since $z\geq 1$ and $z\gg D^{1/4+\varepsilon/4}$, from \eqref{2_582} it follows that
	\begin{align*}  
	(L(1,\chi))^{-1}\displaystyle\sum_{\ell<z}g(\ell) & \geq \log z -\bigg(\dfrac{1}{8}+\dfrac{\varepsilon}{4}\bigg)\log D + O\big(1+(\log y)^{2}z^{-\varepsilon^2/32}\big) \\
	& \geq \dfrac{1}{4}\log y - \dfrac{1}{12}\log x -\bigg(\dfrac{1}{3}+\dfrac{\varepsilon}{4}\bigg)\log D + \dfrac{1}{4}\log a + O(\log\log y) \\
	& = \dfrac{1-\theta_1}{4}\log y + O(\log\log y),
	\end{align*}
	where $\theta_1$ is defined as
	$$
	\theta_1 = \dfrac{\log x}{3\log y} + \bigg(\dfrac{4}{3}+\varepsilon\bigg)\dfrac{\log D}{\log y} -  \dfrac{\log a}{\log y}.
	$$
	One can see that $9\varepsilon/4<1-\theta_1<1/4$. Therefore,
	$$
	\dfrac{1}{(L(1,\chi))^{-1}\displaystyle\sum_{\ell<z}g(\ell)}\leq \dfrac{4}{(1-\theta_1)\log y}\bigg(1+O\bigg(\dfrac{\log\log y}{\log y}\bigg)\bigg).
	$$
	Inserting this in \eqref{15_44}, we obtain the desired result.  
	
	\subsubsection{The second range \eqref{15_556}} We recall that, in the range
	\begin{align*} 
	x^{4/9}\leq y\leq x^{3/5}, 
	\end{align*}
	we are assuming that $x\geq D^{18}$. Now, we choose 
	\begin{align*} 
	z=\dfrac{a^{1/4}\, y^{1/7}(\log y)^{-2}}{D^{5/24}x^{1/28}}+2.
	\end{align*} 
	Note that $z\geq 1$, $z\gg D^{1/4+1/28}$ and $\log z\asymp \log y$. We proceed as in the previous case to obtain \eqref{15_44}. Using \eqref{2_582}, it follows that
	\begin{align*}  
	(L(1,\chi))^{-1}\displaystyle\sum_{\ell<z}g(\ell) & \geq \dfrac{1}{7}\log y - \dfrac{1}{28}\log x -\dfrac{31}{84}\log D + \dfrac{1}{4}\log a + O(\log\log y) \\
	& = \dfrac{1-\theta_2}{7}\log y + O(\log\log y),
	\end{align*}
	where $\theta_2$ is defined by
	$$
	\theta_2 = \dfrac{\log x}{4\log y} +\dfrac{31}{12}\dfrac{\log D}{\log y} - \dfrac{7}{4}\dfrac{\log a}{\log y}.
	$$
	Then, $11/96 \leq 1-\theta_2<7/12$, and we obtain
	$$
	\dfrac{1}{(L(1,\chi))^{-1}\displaystyle\sum_{\ell<z}g(\ell)}\leq \dfrac{7}{(1-\theta_2)\log y}\bigg(1+O\bigg(\dfrac{\log\log y}{\log y}\bigg)\bigg).
	$$
	Inserting this in \eqref{15_44}, we obtain the desired result.
	
	\subsection{The case $L(1,\chi)< (\log y)^{-2}$} Applying Theorem \ref{16_43_02_08} with $\ell=1$, we have that 
	$$
	\dfrac{w}{\delta_f}\big(\pi_f(x)-\pi_f(x-y)\big)\leq \displaystyle\sum_{x-y<n\leq x}r_f(n) = \dfrac{2\pi\,y}{\sqrt{D}} + O\bigg(\dfrac{x^{1/3}}{D^{1/6}}+\dfrac{D^{3/4}x^{1/4}}{a^{7/4}}\bigg).
	$$
	Then, using the class number formula \eqref{8.4} and the bound $L(1,\chi)<(\log y)^{-2}$, it follows that, in both ranges, 
	$$
	\pi_f(x)-\pi_f(x-y) \leq \bigg\{1+O\bigg(\dfrac{D^{1/3}x^{1/3}}{y}
	\bigg)+O\bigg(\dfrac{D^{5/4}x^{1/4}}{a^{7/4}y}
	\bigg)\bigg\}\dfrac{\delta_f \,y}{h(-D)}\,L(1,\chi)\ll \dfrac{y}{h(-D)(\log y)^2}.
	$$
	This implies our desired result in this case, and we conclude the proof of Theorem \ref{brun}. \qed
	
	\bigskip
	
\section{Hecke characters and Hecke $L$-functions}\label{sec:hecke}
	
	In this section, we will review the necessary background on Hecke $L$-functions, and their relation to quadratic forms, to prove Theorem \ref{thm:cramer}.
	
	\subsection{From quadratic forms to ideals of quadratic fields}
	It is well-known that there is a bijection between equivalence classes of positive definite quadratic forms, and equivalence classes of certain ideals in imaginary quadratic fields (see \cite[Section 7]{Cox} and \cite{Zag} for expositions). More precisely, let $f$ be a positive definite primitive form of discriminant $-D$, and let $K=\Q(\sqrt{-D})$ be the associated imaginary quadratic field. We can write 
	\begin{equation}\label{eq:conductorOfOrder}
	D=q^2D_K,
	\end{equation}
	where $q$ is some positive integer and $D_K$ is the absolute discriminant of $K$ over $\Q$. To describe the classes of ideals that correspond to quadratic forms, we must first introduce some notation.\footnote{\,\,\, This notation and some of our subsequent results in this section, could be given for arbitrary algebraic number fields, as in \cite{ZamHecke}. However, for simplicity, we will only state the definitions and results in the case of imaginary quadratic fields, which is the case relevant to positive definite quadratic forms.} We follow Zaman's notation in \cite{ZamHecke} and \cite{ZamT}. 
	  
 Denote by N be the norm in $K$ over $\Q$, and let $\qq $ be an integral ideal of $K$. Let $I(\qq)$ be the group of fractional ideals of $K$ relatively prime to $\qq$, and let $P_\qq$ be the group of principal ideals $(\alpha)$ of $K$ such that $\alpha$ is positive and $\alpha \equiv 1 \,( \mathrm{mod} \, \qq)$. Let
	\begin{equation}\label{eq:rayClassGroup}
	Cl(\qq):=I(\qq)/P_\qq
	\end{equation}
	be the narrow ray class group of $K$ modulo $\qq$. Additionally, let $H$ be a subgroup of $I(\qq)$ such that 
	\begin{equation}\label{eq:congruenceCG}
	P_\qq\subset H \subset I(\qq).
	\end{equation}
	For such an $H$, we call the quotient $I(\qq) /H$ a congruence class group, and we denote by $h_H:=|I(\qq) /H|$ its cardinality. Note that $I(\qq) /H\subset Cl(\qq).$ In our setting for quadratic forms, we will mainly need the above with the principal ideal $\qq=(q)$, where $q$ is given in \eqref{eq:conductorOfOrder};
	and with $H_0$ the group of principal ideals $(\alpha)$ of $K$ such that $\alpha \equiv a \,( \mathrm{mod} \, \qq)$, for some $a\in \Z$ with $((a),\qq)=1$ (that is, with $(a)$ and $\qq$ coprime). Note that $P_\qq\subset H_0 \subset I(\qq)$. With this notation, we can state the equivalence between ideals and forms.
	\begin{lemma}\label{lem:idealsForms}
		For each equivalence class of primitive positive definite quadratic forms $[f]$, there is a unique $A=A_f\in I(\qq) /H_0$ such that, for any integer $m$, $m$ is represented by $f$ if and only if there is an integral ideal $\mathfrak{a}\in A$, with $\rm N\mathfrak{a}=m$. This correspondence is bijective.
	\end{lemma}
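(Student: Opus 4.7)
The plan is to derive this correspondence by combining two classical results from the theory of orders in imaginary quadratic fields, as developed in \cite[Chapter 7]{Cox}. Set $\mathcal{O}:=\Z+q\,\mathcal{O}_K$ to be the order of conductor $q$ in $K=\Q(\sqrt{-D_K})$, with $q$ as in \eqref{eq:conductorOfOrder}. The strategy is first to pass from forms to the proper ideal class group $C(\mathcal{O})$ of the order, and then to transport this class group into the quotient $I(\qq)/H_0$ of the maximal order $\mathcal{O}_K$.

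First, I would invoke \cite[Theorem 7.7]{Cox}, which gives a bijection between equivalence classes of primitive positive definite quadratic forms of discriminant $-D$ and $C(\mathcal{O})$. Under this bijection, the form $f(u,v)=au^2+buv+cv^2$ corresponds to the class of the proper $\mathcal{O}$-ideal $[a,(-b+\sqrt{-D})/2]$, and the representation property is intrinsic to the correspondence: an integer $m$ is represented by $f$ if and only if there is a proper $\mathcal{O}$-ideal $\mathfrak{a}'$ in the associated class with $\mathrm{N}\mathfrak{a}'=m$. Second, I would use \cite[Proposition 7.22]{Cox}, together with \cite[Lemma 7.18]{Cox} (which guarantees each ideal class contains a representative prime to $q$), to obtain an isomorphism $C(\mathcal{O})\cong I(\qq)/H_0$ induced by the extension map $\mathfrak{a}'\mapsto \mathfrak{a}'\mathcal{O}_K$. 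The essential point for our purposes is that this extension preserves norms, $\mathrm{N}(\mathfrak{a}'\mathcal{O}_K)=\mathrm{N}\mathfrak{a}'$, so that composing the two bijections yields the desired class $A_f\in I(\qq)/H_0$ together with the stated representation property.

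The main obstacle would be verifying that the subgroup $H_0$ defined in the excerpt---generated by principal ideals $(\alpha)$ with $\alpha\equiv a\pmod{\qq}$ for some $a\in\Z$ coprime to $\qq$---coincides with the kernel of the surjection $I(\qq)\twoheadrightarrow C(\mathcal{O})$ coming from \cite[Proposition 7.22]{Cox}. This amounts to checking that a fractional $\mathcal{O}_K$-ideal $(\alpha)\in I(\qq)$ contracts to a principal $\mathcal{O}$-ideal if and only if $\alpha$ is, up to multiplication by an element of $P_\qq$, congruent to a rational integer modulo $q\mathcal{O}_K$; this follows from the isomorphism $(\mathcal{O}_K/q\mathcal{O}_K)^{\times}/(\Z/q\Z)^{\times}\cong C(\mathcal{O})/C(\mathcal{O}_K)$ derived in the proof of \cite[Proposition 7.22]{Cox}. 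Once this identification is in place, the statement follows by direct composition of the two classical bijections.
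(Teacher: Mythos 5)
Your proposal is correct and follows essentially the same route as the paper, whose proof simply cites Theorem 7.7 and Proposition 7.22 of \cite{Cox}; you have merely unpacked how those two results compose (via the proper ideal class group $C(\mathcal{O})$ of the order of conductor $q$ and the norm-preserving extension map $\mathfrak{a}'\mapsto\mathfrak{a}'\mathcal{O}_K$). The identification of $H_0$ with the kernel of $I(\qq)\twoheadrightarrow C(\mathcal{O})$, which you flag as the main obstacle, is in fact exactly the content of Proposition 7.22, so no additional verification is needed.
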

	\begin{proof} 
		This follows from Theorem 7.7 and Proposition 7.22 of \cite{Cox}. See also \cite[pp. 144-145]{Cox} for the slightly more general framework of congruence class groups that we use here.
	\end{proof}
	
	\noindent In particular, note that $h(-D)=h_{H_0}=|I(\qq) /H_0|$, where $h(-D)$ is the number of proper equivalence classes of primitive quadratic forms of discriminant $-D.$
	
	\subsection{Hecke characters}
	
	We define a Hecke character $\chi  \,( \mathrm{mod} \, \qq)$ to be a character of the group $Cl(\qq)$, which we defined in \eqref{eq:rayClassGroup}.  Additionally, a character $\chi  \,( \mathrm{mod} \, H)$ is a character of a congruence class group $I(\qq) /H$. Given a Hecke character $\chi  \,( \mathrm{mod} \,\qq)$, abusing notation, we can extend the definition of $\chi$ to a multiplicative function over all integral ideals of $K$, such that $\chi(\nn)=0$ when $(\nn,\qq)\neq 1$, and $\chi(\nn)=1$ when $\nn\in P_\qq.$ With this correspondence, the characters $\chi  \,( \mathrm{mod} \, H)$ of a congruence class group correspond exactly to the Hecke characters mod $\qq$ such that $\chi(\mathfrak{h})=1$, for all $\mathfrak{h}\in H$. From now on, we will work with this extended definition of Hecke characters, as functions over all integral ideals.
	
	We denote the trivial character mod $\qq$ by $\chi_0$, so that $\chi_0(\nn)=1$ when $(\nn,\qq)=1$, and 0 otherwise. Given a character $\chi  \,( \mathrm{mod} \, \qq)$, there is a unique $\mathfrak{f}_\chi\,|\, \qq$, the conductor of $\chi$, such that $\chi$ is induced by a primitive character  $\chi^*\,( \mathrm{mod} \, \mathfrak{f}_\chi)$. This implies that $\chi(\nn)=\chi^*(\nn)\chi_0(\nn)$. See, for instance, \cite{ZamHecke} for further background on Hecke characters. For any congruence class group, we also have the orthogonality relations (see \cite[p. 44]{IK}): for all $A\in I(\qq) /H,$ 
	\begin{equation*} 
	\sum_{\chi\,( \mathrm{mod} \, H)} \chi(A) =\left\{
	\begin{array}{ll}
	h_H, & \text{if } A=H, \\
	0, & \text{if } A\neq H.
	\end{array}
	\right.
	\end{equation*}
	In particular, for an integral ideal $\mathfrak{a}$, we have that
	\begin{align} \label{eq:orthog}
	\sum_{\chi\,( \mathrm{mod} \, H)}\overline{\chi(A)}\chi(\mathfrak{a})=\left\{
	\begin{array}{ll}
	h_H, & \text{if } \mathfrak{a} \in A , \\
	0, & \text{if } \mathfrak{a} \notin A.
	\end{array}
	\right.
	\end{align}
	
	\smallskip
	
	\subsection{The family of Hecke $L$-functions} 
	Here, we describe the family of Hecke $L$-functions in the framework of \cite[Chapter 5]{IK}. Below, we adopt the notation
	$$\Gamma_\mathbb R(z):=\pi^{-z/2}\,\Gamma\left(\frac{z}{2}\right),$$ 
	where $\Gamma$ is the usual Gamma function. For a character $\chi  \,( \mathrm{mod} \, \qq)$, we define the function \begin{equation*}
	L(s,\chi):= \sum_\mathfrak{a} \frac{\chi(\mathfrak{a})}{\rm (N\mathfrak{a})^s}= \prod_\mathfrak{p} \bigg(1-\dfrac{\chi(\mathfrak{p})}{\rm (N\mathfrak{p})^{s}}\bigg)^{-1},
	\end{equation*} 
	where the sum and the product runs over all integral ideals $\mathfrak{a}$ and prime ideals $\mathfrak{p}$ of $K$, respectively, and both converge absolutely to $L(s,\chi)$ on $\{s\in\mathbb C \,;\,\text{Re}\,s>1\}$. When $\chi$ is primitive, it is known that $L(s,\chi)$ satisfies the following conditions (see \cite[p. 129]{IK} and \cite[Section 2]{ZamHecke}):

	\noindent(i) There exists a sequence $\{\lambda_\chi(n)\}_{n\ge1}$ of complex numbers ($\lambda_\chi(1) =1$), such that the series $$\sum_{n=1}^\infty\frac{\lambda_\chi(n)}{n^s}$$ converges absolutely to $L(s,\chi)$ on $\{s\in\mathbb C \,;\,\text{Re}\,s>1\}$. In fact, the sequence $\{\lambda_\chi(n)\}_{n\ge1}$ is defined by
	$$
	\lambda_\chi(n) = \sum_{\substack{\mathfrak{a} \\ \rm N\mathfrak{a} = n}}\chi(\mathfrak{a}).
	$$
	\noindent(ii) For each prime number $p$, there exist $\alpha_{1,\chi}(p)$ and $\alpha_{2,\chi}(p)$ in $\mathbb C$, such that $|\alpha_{j,\chi}(p)|\leq1$ and\footnote{\,\,\, This follows from the factorization law of primes in imaginary quadratic fields (see, for instance \cite[p. 57]{IK}).} 
	$$L(s,\chi)=\prod_p\left(1-\frac{\alpha_{1,\chi}(p)}{p^s}\right)^{-1}\left(1-\frac{\alpha_{2,\chi}(p)}{p^s}\right)^{-1}.$$
	The product converges absolutely on the half plane $\{s \in \C; \text{Re}\,s>1\}$.

	\noindent(iii) Denote $D_\chi:= D_K\,\rm N\cond$. The completed $L$-function 
	$$\Lambda(s,\chi):=D_\chi^{s/2}\,\Gamma_\mathbb R(s)\,\Gamma_\mathbb R(s+1)\,L(s,\chi)$$
	is a meromorphic function of order 1. It has no poles other than $0$ and $1$, which have the same order $r(\chi)\in\{0,1\}$. Additionally, $r(\chi)=1$ if $\chi$ is the trivial character mod $\qq$, and 0 otherwise. Furthermore, the function $\Lambda(s,\chi)$ satisfies the  the functional equation
	\begin{equation*} 
	\Lambda(s,\chi)=\epsilon(\chi)\Lambda(1-s,\overline{\chi}),
	\end{equation*}
	where $\epsilon(\chi)$ is a complex number of absolute value 1. In particular, when $\qq = (1)$ and $\chi=\chi_0$, the function $L(s,\chi_0)$ is the Dedekind zeta function $\zeta_K(s)$ of $K$, defined as in \cite[Section 5.10]{IK}. Moreover, we have that
	\begin{equation*} 
	\frac{L'}{L}(s,\chi)=-\sum_{n=2}^\infty\frac{\Lambda_\chi(n)}{n^s}
	\end{equation*}
	converges absolutely for $\re{s}>1$, where\footnote{\,\,\, We also extend this definition of $\Lambda_\chi$ to any function defined over integral ideals, in place of $\chi$.}\begin{align}  \label{eq:mangoldtChi}
	\Lambda_\chi(n)= \sum_{\substack{\mathfrak{a} \\ \rm N\mathfrak{a} = n}}\chi(\mathfrak{a})\Lambda_K(\mathfrak{a}), 
	\end{align}
	and
	\begin{equation}  \label{eq:mangoldtK}
	\Lambda_K(\mathfrak{a})=\left\{
	\begin{array}{ll}
	\log \rm N\mathfrak{p},\hspace{0.3cm}\text{if}\hspace{0.2cm} \mathfrak{a}=\mathfrak{p}^r \hspace{0.2cm}\mbox{for some integer $r\geq 1$},\\
	0,\hspace{1.06cm}\text{otherwise}.
	\end{array}
	\right.
	\end{equation}
	Logarithmically differentiating the Euler product, it can be shown that $|\Lambda_\chi(n)|\leq 2\Lambda(n)$, where $\Lambda(n)$ is the usual von Mangoldt function. In particular, one can see that
	\begin{align} \label{1_13_27_11}
	\sum_{\substack{\mathfrak{a} \\ \rm N\mathfrak{a} = n}}\Lambda_K(\mathfrak{a}) \leq 2\Lambda(n).
	\end{align}

	\begin{remark}
		If $\chi$ (mod $\qq$) is a non-primitive character induced by the primitive character $\chi^*$ (mod $\cond$), we have the relation
		\begin{equation} \label{20_16}
		L(s,\chi) = L(s,\chi^*)\prod_{\pp| \qq}\left(1-\frac{\chi^*(\pp)}{(\rm N \pp)^{s}}
		\right).
		\end{equation}
		In particular, $L(s,\chi)$ also extends to a meromorphic function, such that $L(s,\chi)$ and $L(s,\chi^*)$ have the same set of zeros in the strip $0 < \re s <1. $
	\end{remark}
	\subsection{The Guinand-Weil formula}
	The classical Guinand-Weil explicit formula establishes a relationship between the zeros of an $L$-function, the associated coefficients $\Lambda_\chi(n)$ (given in this case in \eqref{eq:mangoldtChi}), an arbitrary function $G$, and its Fourier transform $\widehat{G}$. 
	In the case of a Hecke $L$-function $L(s,\chi)$, the coefficients $\Lambda_\chi(n)$ contain information about prime ideals, twisted by the character $\chi$. 
	
 	We will use the version of this formula in \cite[Lemma 5]{CF}. 
	However, this only applies to the case of a primitive Hecke character mod $\qq$, and we will need a version that averages over all characters, primitive and non-primitive, in a given congruence class group. The result is the following, which could be of independent interest for further applications.\footnote{\,\,\, Like the rest of this section, the previous lemma is only stated for the case of imaginary quadratic fields, to simplify the technical details of some of the definitions. However, a similar statement holds true for families of Hecke $L$-functions of arbitrary algebraic number fields, with a similar proof.}
\begin{lemma}\label{lem:GWaverage} Let $\qq$ be an integral ideal of the imaginary quadratic field $K$. Let $I(\qq)/H$ be a congruence class group as in \eqref{eq:congruenceCG}, and let $A\in I(\qq) /H.$ Let $G(s)$ be analytic in the strip $|\im{s}|\leq \tfrac12+\varepsilon$, for some $\varepsilon>0$. Assume that $|G(s)|\ll(1+|s|)^{-(1+\delta)}$ for some $\delta>0$, when $|\re{s}|\to\infty$. Then 
		\begin{align*}
		\sum_{\chi\,( \mathrm{mod} \, H)} \overline{\chi(A)} \sum_{\rho_\chi} G\left(\frac{\rho_\chi - \tfrac12}{i}\right)= \,\,& G\left(\frac{1}{2i}\right)+G\left(-\frac{1}{2i}\right) \\  &+ 
		\frac{h_H\kappa_H(A)}{\pi}
		\int_{-\infty}^\infty G(u)\bigg\{\,{\rm Re}\,\frac{\Gamma_\mathbb R'}{\Gamma_\mathbb R}\left(\hh+iu\right)+\,{\rm Re}\,\frac{\Gamma_\mathbb R'}{\Gamma_\mathbb R}\left(\tfrac{3}{2}+iu\right)\bigg\}\d u\\
		&  -\frac{h_H}{2\pi}\sum_{n=2}^\infty\frac{1}{\sqrt{n}}\, \widehat G\left(\frac{\log n}{2\pi}\right)\left\{\displaystyle\sum_{\substack{\mathfrak{a}\in A  \vspace{0.07cm}\\ \rm N\mathfrak{a}=n }}\Lambda_K(a) \right \}\\
		&   -\frac{1}{2\pi}\sum_{n=2}^\infty\frac{1}{\sqrt{n}}\,\widehat G\left(\frac{-\log n}{2\pi}\right)\left\{\sum_{\chi\,( \mathrm{mod} \, H)} \overline{\chi(A)}\ov{\Lambda_{\chi}(n)}\right\} \\
		& +O\left(h_H\log (D_K\rm N \qq)\lVert \widehat{G}\rVert_\infty\right),
		\end{align*}
		where the sum over $\rho_\chi$ runs over all zeros of $L(s,\chi)$ in the strip $0<\re{s}<1$. The coefficients $\Lambda_\chi(n)$ and $\Lambda_K(\mathfrak{a})$ are defined in \eqref{eq:mangoldtChi}; $\kappa_H(A)=1$ when $A=H$, and $0$ otherwise.
	\end{lemma}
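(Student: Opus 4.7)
The plan is to begin with the classical Guinand--Weil explicit formula applied to each primitive Hecke character $\chi^*\,(\mathrm{mod}\,\cond)$ induced by $\chi\,(\mathrm{mod}\,\qq)$, then multiply by $\overline{\chi(A)}$ and sum over all $\chi\,(\mathrm{mod}\,H)$; the orthogonality relation \eqref{eq:orthog} will collapse the Dirichlet-side contributions into a sum over ideals in the class $A$. The two delicate points are the passage from primitive to non-primitive characters (needed in order to apply orthogonality) and the bookkeeping of the conductor-dependent and ``ramified'' prime-power contributions into the stated error term.

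First, I would invoke the classical explicit formula for the primitive $L(s,\chi^*)$ over the imaginary quadratic field $K$ (e.g., adapting \cite[Lemma 5]{CF} to this setting, or using \cite[Theorem 5.12]{IK}), which reads schematically
$$\sum_{\rho_{\chi^*}} G\!\left(\tfrac{\rho_{\chi^*}-1/2}{i}\right) = r(\chi^*)\bigl(G(\tfrac{1}{2i})+G(-\tfrac{1}{2i})\bigr) + \tfrac{\widehat{G}(0)}{2\pi}\log D_\chi + \mathcal{I}_\Gamma(G) - \tfrac{1}{2\pi}\sum_{n\geq 2} \tfrac{\Lambda_{\chi^*}(n)}{\sqrt{n}}\,\widehat{G}\!\left(\tfrac{\log n}{2\pi}\right) - \tfrac{1}{2\pi}\sum_{n\geq 2} \tfrac{\overline{\Lambda_{\chi^*}(n)}}{\sqrt{n}}\,\widehat{G}\!\left(\tfrac{-\log n}{2\pi}\right),$$
where $\mathcal{I}_\Gamma(G)$ denotes the Gamma-factor integral coming from $\Gamma_\mathbb{R}(s)\Gamma_\mathbb{R}(s+1)$. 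By \eqref{20_16}, $L(s,\chi)$ and $L(s,\chi^*)$ share the same zeros in the critical strip, so the left-hand side already equals the zero-sum for $\chi$ appearing in the statement.

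Next, I multiply by $\overline{\chi(A)}$ and sum over $\chi\,(\mathrm{mod}\,H)$. The pole term collapses to a single copy of $G(1/(2i))+G(-1/(2i))$: only the trivial Hecke character mod $\qq$ satisfies $r(\chi^*)=1$, it occurs as a unique $\chi\,(\mathrm{mod}\,H)$, and for it $\overline{\chi(A)}=1$. The Gamma integrand is independent of $\chi$, so by \eqref{eq:orthog} applied at $\mathfrak{a}=(1)$ it picks up the factor $\sum_\chi \overline{\chi(A)} = h_H\,\kappa_H(A)$, matching the stated Gamma term. The conductor contribution $\tfrac{\widehat{G}(0)}{2\pi}\sum_\chi \overline{\chi(A)}\log D_\chi$ is bounded using $D_\chi = D_K\,\mathrm{N}\cond \leq D_K\,\mathrm{N}\qq$ and $|\widehat{G}(0)|\leq\|\widehat{G}\|_\infty$, and absorbed into the error $O\bigl(h_H\log(D_K\mathrm{N}\qq)\|\widehat{G}\|_\infty\bigr)$.

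Finally, I handle the Dirichlet sums. Logarithmically differentiating \eqref{20_16} shows that $\Lambda_{\chi^*}(n)=\Lambda_\chi(n)$ whenever $(n,\mathrm{N}\qq)=1$, with discrepancy supported only on prime powers $n=\mathrm{N}\pp^k$ with $\pp\mid\qq$ and bounded pointwise by $\log\mathrm{N}\pp$. Weighting by $\widehat{G}/\sqrt{n}$ and summing absolutely yields a contribution $\ll \|\widehat{G}\|_\infty\log\mathrm{N}\qq$ per character, and therefore $\ll h_H\log\mathrm{N}\qq\,\|\widehat{G}\|_\infty$ over all characters, which again fits inside the stated error. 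After this reduction, orthogonality \eqref{eq:orthog} gives
$$\sum_\chi \overline{\chi(A)}\,\Lambda_\chi(n) = \sum_\chi \overline{\chi(A)} \sum_{\mathrm{N}\mathfrak{a}=n}\chi(\mathfrak{a})\,\Lambda_K(\mathfrak{a}) = h_H \!\!\sum_{\substack{\mathfrak{a}\in A\\ \mathrm{N}\mathfrak{a}=n}}\!\!\Lambda_K(\mathfrak{a}),$$
producing exactly the third term in the lemma. For the conjugate sum involving $\overline{\Lambda_\chi(n)}$, applying orthogonality would only repackage it as a sum over the dual class $A^{-1}$; since the statement retains it as a character average (this being the form needed downstream in the Cram\'er-type application), I would leave it untouched. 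The main obstacle I anticipate is the precise absolute bookkeeping of the two character-dependent ``small'' pieces---the variation of $\log D_\chi$ and the $\Lambda_{\chi^*}-\Lambda_\chi$ discrepancy on prime powers dividing $\qq$---both of which must be shown to sum to $O\bigl(h_H\log(D_K\mathrm{N}\qq)\|\widehat{G}\|_\infty\bigr)$; everything else is a direct averaging of the classical primitive explicit formula.
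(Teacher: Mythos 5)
Your proposal is correct and follows essentially the same route as the paper: apply the primitive-character explicit formula of \cite[Lemma 5]{CF}, pass to non-primitive characters via \eqref{20_16} (same zeros in the strip, a ramified-prime discrepancy in the Dirichlet sum, and a conductor term bounded by $\log(D_K\,{\rm N}\qq)$), then average against $\overline{\chi(A)}$ and use orthogonality. The only cosmetic difference is that you bound the ramified discrepancy by $\ll\log{\rm N}\qq$ per character rather than the paper's sharper $\sqrt{\log({\rm N}\qq+1)}$ from its Lemma \ref{lemma10}, which is still amply sufficient for the stated error term.
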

	
	
	\begin{proof}
		We follow the approach used in \cite[Lemma 3]{CPL} for Dirichlet characters modulo $q\geq 3$. The Guinand-Weil formula in \cite[Lemma 5]{CF}, when specialized to $L(s,\chi)$ for a primitive Hecke character $\chi\,( \mathrm{mod} \, \qq)$, states the following:
		\begin{align}\label{eq:guinand}
		\begin{split}
		\sum_{\rho_\chi} G\left(\frac{\rho_\chi - \tfrac12}{i}\right)&= r(\chi)\left\{G\left(\frac{1}{2i}\right)+G\left(-\frac{1}{2i}\right)\right\} + \frac{\log D_\chi}{2\pi}\,\widehat{G}(0) \\
		& \ \ \  + \frac{1}{\pi}\int_{-\infty}^\infty G(u)\bigg\{\,{\rm Re}\,\frac{\Gamma_\mathbb R'}{\Gamma_\mathbb R}\left(\hh+iu\right)+\,{\rm Re}\,\frac{\Gamma_\mathbb R'}{\Gamma_\mathbb R}\left(\tfrac{3}{2}+iu\right)\bigg\}\d u\\
		& \ \ \ -\frac{1}{2\pi}\sum_{n=2}^\infty\frac{1}{\sqrt{n}}\left\{\Lambda_{\chi}(n)\, \widehat G\left(\frac{\log n}{2\pi}\right)+\overline{\Lambda_{\chi}(n)}\, \widehat G\left(\frac{-\log n}{2\pi}\right)\right\},
		\end{split}
		\end{align}
where the sum on the left-hand side runs over the zeros of $\Lambda(s,\chi)$, which coincide with the zeros of $L(s,\chi)$ in $0<\re{s}<1$. Now, let $\chi$ be a non-primitive character mod $\qq$. Let $\chi^* \,(\mathrm{mod} \,\cond)$ be the unique primitive character that induces $\chi,$ where $\cond \,|\, \qq,$ so that $\chi = \chi^*\chi_0,$ where $\chi_0$ is the trivial character mod $\qq$. We can then write $\chi^*(\mathfrak{a})=\chi(\mathfrak{a})+\chi^*(\mathfrak{a})\Tilde{\chi}_0(\mathfrak{a})$, where $\Tilde{\chi}_0(\mathfrak{a})=1-\chi_0(\mathfrak{a})$. 
		Applying \eqref{eq:guinand} for $\chi^*$, it follows that
		\begin{align*}
	\sum_{\rho_{\chi^*}} G\left(\frac{\rho_{\chi^*}- \tfrac12}{i}\right)&=  r(\chi)\left\{G\left(\frac{1}{2i}\right)+G\left(-\frac{1}{2i}\right)\right\}+
		\frac{\log D_{\chi^*}}{2\pi}\,\widehat{G}(0)  \\
		& \ \ \ + \frac{1}{\pi}\int_{-\infty}^\infty G(u)\bigg\{\,{\rm Re}\,\frac{\Gamma_\mathbb R'}{\Gamma_\mathbb R}\left(\hh+iu\right)+\,{\rm Re}\,\frac{\Gamma_\mathbb R'}{\Gamma_\mathbb R}\left(\tfrac{3}{2}+iu\right)\bigg\}\d u\\
		& \ \ \ -\frac{1}{2\pi}\sum_{n=2}^\infty\frac{1}{\sqrt{n}}\left\{\Lambda_{\chi}(n)\, \widehat G\left(\frac{\log n}{2\pi}\right)+\overline{\Lambda_{\chi}(n)}\, \widehat G\left(\frac{-\log n}{2\pi}\right)\right\} \\	
		& \ \ \ -\frac{1}{2\pi}\sum_{n=2}^\infty\frac{1}{\sqrt{n}}\left\{\Lambda_{\chi^*\Tilde{\chi}_0}(n)\, \widehat G\left(\frac{\log n}{2\pi}\right)+\overline{\Lambda_{\chi^*\Tilde{\chi}_0}(n)}\, \widehat G\left(\frac{-\log n}{2\pi}\right)\right\} .
		\end{align*}
		Since $\Tilde{\chi}_0(\mathfrak{a})=0$ when $\mathfrak{a}$ and $\qq$ are coprime, by Lemma \ref{lemma10}, the last sum can be bounded by
		\begin{align*}
		\left| \frac{1}{2\pi}\sum_{n=2}^\infty\frac{1}{\sqrt{n}}\left\{\Lambda_{\chi^*\Tilde{\chi}_0}(n)\, \widehat G\left(\frac{\log n}{2\pi}\right)+\overline{\Lambda_{\chi^*\Tilde{\chi}_0}(n)}\, \widehat G\left(\frac{-\log n}{2\pi}\right)\right\} \right|
		&\ll \lVert \widehat{G}\rVert_\infty
		\sum_{\pp | \qq, \, k\ge 1 } \frac{\log \rm N\pp}{(\rm N\pp)^{k/2}} \\
		&\ll 
		\lVert \widehat{G}\rVert_\infty \,
		\sqrt{\log (\rm N \qq+1)}.
		\end{align*}
		Letting $Q_H:=\max \{\rm N\mathfrak{f}_\chi:\, \chi\,( \mathrm{mod} \, \it{H}) \}$, note that 
		$$\log D_\chi \le \log (D_K\,Q_H) \le \log( D_K \,\rm N \qq).$$ 
		By \eqref{20_16}, $L(s,\chi)$ and $L(s,\chi^*)$ have the same zeros in $0<\re{s}<1$. Then, for any non-primitive character $\chi\, (\mathrm{mod}\, H)$, we obtain that\footnote{\,\,\, In fact, in this step we have the slightly better error term $\ll \big(\log(D_K \, Q_H) + \sqrt{\log \rm (N \qq+1)} \big)\|\widehat{G}\|_\infty$.} 
		\begin{align*}
		\sum_{\rho_{\chi}} G\left(\frac{\rho_{\chi}- \tfrac12}{i}\right)&=   r(\chi)\left\{G\left(\frac{1}{2i}\right)+G\left(-\frac{1}{2i}\right)\right\} +  \frac{1}{\pi}\int_{-\infty}^\infty G(u)\bigg\{\,{\rm Re}\,\frac{\Gamma_\mathbb R'}{\Gamma_\mathbb R}\left(\hh+iu\right)+\,{\rm Re}\,\frac{\Gamma_\mathbb R'}{\Gamma_\mathbb R}\left(\tfrac{3}{2}+iu\right)\bigg\}\d u \\
		& \ \ \ -\frac{1}{2\pi}\sum_{n=2}^\infty\frac{1}{\sqrt{n}}\left\{\Lambda_{\chi}(n)\, \widehat G\left(\frac{\log n}{2\pi}\right)+\overline{\Lambda_{\chi}(n)}\, \widehat G\left(\frac{-\log n}{2\pi}\right)\right\} + O\left(\log (D_K\, \rm N\qq)\lVert \widehat{G}\rVert_\infty\right).
		\end{align*}
		We now multiply by $\chi(A)$ and sum over all $\chi\,(\mathrm{mod} \,H)$. Using that $r(\chi)=1$ if $\chi$ is the trivial character, and 0 otherwise, we get that 
		\begin{align*}
		\sum_{\chi\,( \mathrm{mod} \, H)} \overline{\chi(A)} \sum_{\rho_\chi} G\left(\frac{\rho_\chi - \tfrac12}{i}\right) = \,\, &  G\left(\frac{1}{2i}\right)+G\left(-\frac{1}{2i}\right)   
		\\ &+ 
		\sum_{\chi\,( \mathrm{mod} \, H)} \overline{\chi(A)}\frac{1}{\pi}
		\int_{-\infty}^\infty G(u)\bigg\{\,{\rm Re}\,\frac{\Gamma_\mathbb R'}{\Gamma_\mathbb R}\left(\hh+iu\right)+\,{\rm Re}\,\frac{\Gamma_\mathbb R'}{\Gamma_\mathbb R}\left(\tfrac{3}{2}+iu\right)\bigg\}\d u\\
		& -\frac{1}{2\pi}\sum_{n=2}^\infty\frac{1}{\sqrt{n}}\, \widehat G\left(\frac{\log n}{2\pi}\right)	\left\{\sum_{\chi\,( \mathrm{mod} \, H)}\overline{\chi(A)}\Lambda_\chi(n)\right\}\\
		& -\frac{1}{2\pi}\sum_{n=2}^\infty\frac{1}{\sqrt{n}}\,\widehat G\left(\frac{-\log n}{2\pi}\right)\left\{\sum_{\chi\,( \mathrm{mod} \, H)} \overline{\chi(A)}\ov{\Lambda_{\chi}(n)}\right\}	\\
		&  +O\left(h_H\log (D_K \rm N\qq)\,  \lVert \widehat{G}\rVert_\infty\right).
		\end{align*}
		Using \eqref{eq:mangoldtChi}, Fubini's theorem, and the orthogonality relations \eqref{eq:orthog}, we obtain the desired result.
	\end{proof}

	\section{Proof of Theorem \ref{thm:cramer}}\label{sec:cramer}
	We follow the argument of Carneiro, Milinovich, and Soundararajan in \cite[Section 5]{CMS}. To begin, fix a primitive positive definite quadratic form $f$ of discriminant $-D$, and let $A\in I(\qq)/H_0$ be the corresponding ideal class as in Lemma \ref{lem:idealsForms}.
	Assume GRH for all Hecke $L$-functions associated with characters $\chi \,( \mathrm{mod} \, H_0)$. Furthermore, take a fixed even and bandlimited Schwartz function $F:\R \to\R$ such that $F(0)>0$ and $\supp(\widehat{F})\subset [-N,N]$, for some $N\geq 1$. We can extend $F$ to an entire function, and using the Phragm\'{e}n-Lindel\"{o}f principle, the hypotheses of Lemma \ref{lem:GWaverage}  are satisfied. Let $0<\Delta\leq 1$ and $1<\sigma$ be free parameters, to be chosen later, such that
	\begin{equation*} 
	2\pi\Delta N\leq \log \sigma.
	\end{equation*}
	We remark that we will send $\sigma\to\infty$ and $\Delta\to0$. In this section, we will allow all implicit constants to depend on the fixed quadratic form $f$, its discriminant, and the fixed function $F$, but not on the free parameters $\sigma$ and $\Delta.$ 
	
\subsection{Asymptotic analysis}	The following computations are similar to those in \cite{CMS} and \cite{CPL}, so we highlight the differences. Consider the function $G(z):=\Delta F(\Delta z)\sigma^{iz}$.  We apply Lemma \ref{lem:GWaverage} to $G$, with our specific choices of $\qq$, $H_0$ and $A$. This gives  
	\begin{align}\label{eq:guinGRH}
	\begin{split}
	\sum_{\chi\,( \mathrm{mod} \, H_0)} \overline{\chi(A)} \sum_{\gamma_\chi} G(\gamma_\chi)=\,\,& G\left(\frac{1}{2i}\right)+G\left(-\frac{1}{2i}\right)   \\
	&+ 
	\frac{h(-D)\kappa_{H_0}(A)}{\pi}
	\int_{-\infty}^\infty G(u)\bigg\{\,{\rm Re}\,\frac{\Gamma_\mathbb R'}{\Gamma_\mathbb R}\left(\hh+iu\right)+\,{\rm Re}\,\frac{\Gamma_\mathbb R'}{\Gamma_\mathbb R}\left(\tfrac{3}{2}+iu\right)\bigg\}\d u\\
	&  -\frac{h(-D)}{2\pi}\sum_{n=2}^\infty\frac{1}{\sqrt{n}}\, \widehat G\left(\frac{\log n}{2\pi}\right)\left\{\sum_{\substack{\mathfrak{a}\in A \\ \rm {N}\mathfrak{a} = n}}\Lambda_K(\mathfrak{a})\right\}\\
	&  -\frac{1}{2\pi}\sum_{n=2}^\infty\frac{1}{\sqrt{n}}\widehat G\left(\frac{-\log n}{2\pi}\right)\left\{\sum_{\chi\,( \mathrm{mod} \, H_0)} \overline{\chi(A)}\ov{\Lambda_{\chi}(n)}\right\}	+O(1),
	\end{split}
	\end{align}
	where the inner sum on the left-hand side runs over the imaginary parts of the zeros of $L(s,\chi)$ on the line $\re{s}=\hh$. The first, second, and fourth lines in the right-hand side of \eqref{eq:guinGRH} can be estimated as in \cite[pp. 553--554]{CMS}. In this way, we obtain the following:
	%
	%
	%
	%
	%
	%
	\begin{align*} 
	\sum_{\chi\,( \mathrm{mod} \, H_0)}\overline{\chi(A)}\sum_{\gamma_{\chi}} G(\gamma_{\chi}) 
	=\,\, &\Delta  F(0)\, \sqrt{\sigma}
	\ -\ \frac{h(-D)}{2\pi}\sum_{n=2}^\infty\frac{1}{\sqrt{n}}\,\widehat G\left(\frac{\log n}{2\pi}\right)
	\left\{\displaystyle\sum_{\substack{\mathfrak{a}\in A  \vspace{0.07cm}\\ \rm N\mathfrak{a}=n }}\Lambda_K(\mathfrak{a}) \right \} \\
	&+ O\big(\Delta^2\sqrt{\sigma}\big)+ O(1).
	\end{align*}
	Therefore,
	\begin{align} \label{eq:UsingGW}
	\begin{split}
	\Delta F(0)\,\sqrt{\sigma}\leq & \displaystyle\sum_{\chi\,( \mathrm{mod} \, H_0)}\displaystyle\sum_{\gamma_\chi} \big|G(\gamma_\chi)\big| + \frac{h(-D)}{2\pi}\sum_{n=2}^\infty\frac{1}{\sqrt{n}}\,\widehat G\left(\frac{\log n}{2\pi}\right)_{+}\left\{\displaystyle\sum_{\substack{\mathfrak{a}\in A  \vspace{0.07cm}\\ \rm N\mathfrak{\mathfrak{a}}=n }}\Lambda_K(\mathfrak{a})\right\}\\
	& \,\,\,\,\,\,\,\,
	+O\big(\Delta^2\sqrt{\sigma}\big)+ O(1).
	\end{split}
	\end{align}
	To analyze the first sum on the right-hand side of \eqref{eq:UsingGW}, we recall the formula \cite[Theorem 5.8]{IK}
	$$
	N(T,\chi)=\dfrac{T}{\pi}\log\bigg(\dfrac{D_\chi T^2}{(2\pi e)^2}\bigg) + O(\log T+\log D_\chi),
	$$
	where $N(T,\chi)$ denotes the number of zeros of $L(s,\chi)$ in the rectangle $0< \sigma< 1$ and $|\gamma|\leq T$. This holds for both primitive and non-primitive characters. Note that the term $T^2$ comes from the fact that $K$ is an algebraic extension of $\Q$ of degree 2. 
	For each $\chi \,( \mathrm{mod} \, H_0)$, integration by parts gives us (see \cite[Eq. (5.4)]{CMS}) that
	\begin{align*} \displaystyle\sum_{\gamma_\chi} |G(\gamma_\chi)| & = \frac{\log(1/2\pi\Delta)}{\pi}\norm{F}_1 +O(1).
	\end{align*}
	Then,
	\begin{align} \label{eq:sumzeros}
	\displaystyle\sum_{\chi\,( \mathrm{mod} \, H_0)}\displaystyle\sum_{\gamma_\chi} \big|G(\gamma_\chi)\big| = h(-D)\,\frac{\log(1/2\pi\Delta)}{\pi}\norm{F}_1 +O(1).
	\end{align}
\subsection{From ideals to primes represented by $f$}	We now consider the second sum on the right-hand side of \eqref{eq:UsingGW}. This sum is given by
	\begin{align} \label{eq:sumPrimes}
	\sum_{n=2}^\infty\frac{1}{\sqrt{n}}\,\widehat G\left(\frac{\log n}{2\pi}\right)_{+}\left\{\displaystyle\sum_{\substack{\mathfrak{a}\in A  \vspace{0.07cm}\\ \rm N\mathfrak{a}=n }}\Lambda_K(\mathfrak{a})\right\}
	=
	\sum_{n=2}^\infty\frac{1}{\sqrt{n}}\,\widehat F\left(\frac{\log(n/\sigma)}{2\pi\Delta}\right)_{+}\left\{\displaystyle\sum_{\substack{\mathfrak{a}\in A  \vspace{0.07cm}\\ \rm N\mathfrak{a}=n }}\Lambda_K(\mathfrak{a})\right\}.
	\end{align}
	
	We first make some reductions to the sum over $n$. Initially, since $\supp(\widehat{F})\subset [-N,N]$, the sum runs over $\sigma e^{-2\pi\Delta N}\leq n\leq \sigma e^{2\pi\Delta N}$. Note that the sum is supported over integers $n$ that are (integer) prime powers, since $\Lambda_K$ is supported on powers of prime ideals. Furthermore, by the relationship between ideals and forms (Lemma \ref{lem:idealsForms}), the sum over $n$ is actually supported over prime powers that are represented by $f$. Using \eqref{1_13_27_11}, the contribution of the prime powers $n=p^k$, with $k\ge 2$, is $O(1)$. The sum \eqref{eq:sumPrimes} is therefore reduced, up to an error term $O(1)$, to a sum over primes $p$ represented by $f$, such that $p \in [\sigma e^{-2\pi\Delta N}, \sigma e^{2\pi\Delta N}]$. Our version of the Brun-Titchmarsh theorem, Corollary \ref{cor:brun}, will be useful to estimate the contribution near the endpoints of this interval.
	
	We continue by choosing the parameters $\Delta$ and $\sigma,$ and bounding the corresponding contribution of the primes in the interval $(\sigma e^{-2\pi\Delta},\sigma e^{2\pi\Delta}]$ 
	to the sum \eqref{eq:sumPrimes}. Fix $\alpha\geq 0$, and assume that $c>0$ is a fixed constant such that
	$$
	\liminf_{x\to\infty}\dfrac{\pi_f\big(x+c\sqrt{x}\log x)-\pi_f(x)}{\sqrt{x}} \le  \alpha.
	$$
	Then, for any $\varepsilon>0$, there exists a sequence of $x\to\infty$, such that there are at most $(\alpha + \varepsilon)\sqrt{x}$ primes represented by $f$ in the interval $(x, x+c\sqrt{x}\log x].$ For each $x$ in this sequence, we choose $\sigma$ and $\Delta$ such that
	\begin{align*}
	\left[x,x+c\sqrt{x}\log x\right]=\Big[\sigma e^{-2\pi\Delta},\sigma e^{2\pi\Delta}\Big].
	\end{align*}	
	This implies that (see \cite[Eq. (5.7)-(5.8)]{CMS})
	\begin{align*}
	4\pi\Delta=c\,\dfrac{\log x}{\sqrt{x}} + O\bigg(\dfrac{\log^2x}{x}\bigg),\   \ \text{ and }   \ \ \sigma=x+O(\sqrt{x}\log x). 
	\end{align*}
	Note that $\delta_f=1/2$ (defined in \eqref{20_39}) if and only if $A=\{\bar{\mathfrak{a}}: \mathfrak{a}\in A\}$. Using the factorization law of primes in imaginary quadratic fields \cite[p. 57]{IK}, we can plainly see that
	\begin{equation}\label{eq:mangoldt}
	\displaystyle\sum_{\substack{\mathfrak{a}\in A  \vspace{0.07cm}\\ \rm N\mathfrak{a}=p }}\Lambda_K(\mathfrak{a})
	= 
	\log p  \displaystyle\sum_{\substack{\mathfrak{a}\in A  \vspace{0.07cm}\\ \rm N\mathfrak{a}=p }} 1 
	\le \frac{\log p}{\delta_f}.
	\end{equation}
	Using that $(\widehat{F}(t))_+\leq\|F\|_1$ and \eqref{eq:mangoldt}, we bound the contribution in this interval by
	\begin{align*}
	\|F\|_1\displaystyle\sum_{p\in(\sigma e^{-2\pi\Delta}, \sigma e^{2\pi\Delta}]}\dfrac{1}{\sqrt{p}}\left\{\displaystyle\sum_{\substack{\mathfrak{a}\in A  \vspace{0.07cm}\\ \rm N\mathfrak{a}=p }}\Lambda_K(\mathfrak{a})\right\}& \leq \frac{\|F\|_1}{\delta_f}\,\displaystyle\sum_{p\in(\sigma e^{-2\pi\Delta},\sigma e^{2\pi\Delta}]}\dfrac{\log p}{\sqrt{p}} \\& \leq \frac{\|F\|_1}{\delta_f}(\alpha+\varepsilon)\sqrt{x}\,\,\dfrac{\log x}{\sqrt{x}}=\frac{\|F\|_1}{\delta_f}(\alpha+\varepsilon)\log x.
	\end{align*}
	Finally, we estimate the contribution of the primes in the intervals $[\sigma e^{-2\pi\Delta N},\sigma e^{-2\pi\Delta}]$ and $[\sigma e^{2\pi\Delta},\sigma e^{2\pi\Delta N}]$. We will need the following estimate: for $g\in C^1([a,b])$ we have
	\begin{equation} \label{eq:sumIntegral}
	0\leq S(g_+,P)-\displaystyle\int_a^b(g(t))_+\,dt\leq \delta(b-a)\sup_{x\in [a,b]}|g^{\prime}(x)|,
	\end{equation}
	where $P$ is a partition of $[a,b]$ of norm at most $\delta$ and $S(g_+,P)$ is the upper Riemann sum of the function $g_+$ and the partition $P$. We apply \eqref{eq:sumIntegral} with the function
	$$g(t)=\widehat{F}\bigg(\frac{\log (t/\sigma)}{2\pi\Delta}\bigg),$$
	and the partition $P=\{x_0<\ldots<x_J\}$ that covers the interval $[\sigma e^{2\pi\Delta},\sigma e^{2\pi\Delta N}]\subset \cup_{j=0}^{J-1}[x_j,x_{j+1}]$, with $x_0=\sigma e^{2\pi\Delta}$, $x_{j+1}=x_j+\sqrt{x_j}$. Defining $M_j=\sup\{g^+(x) : x\in[x_j,x_{j+1}]\}$, by Corollary \ref{cor:brun}, \eqref{eq:mangoldt}, and \eqref{eq:sumIntegral} we bound the contribution in this interval as follows:\footnote{\,\,\, See  \cite[p. 7]{CPL} for details in this computation.}
	%
	\begin{align*}  
	\sum_{\substack{1\leq\frac{\log p/\sigma}{2\pi\Delta}\leq N }} & 
	\frac{1}{\sqrt{p}}
	\,\widehat F \left(\frac{\log(p/\sigma)}{2\pi\Delta}\right)_{+} \left\{\displaystyle\sum_{\substack{\mathfrak{a}\in A  \vspace{0.07cm}\\ \rm N\mathfrak{a}=p }}\Lambda_K(\mathfrak{a})\right\} \nonumber \\
	&\le \frac{1}{\delta_f}\sum_{\substack{1\leq\frac{\log p/\sigma}{2\pi\Delta}\leq N \\ p \text{ represented by }f}}\, \widehat F \left(\frac{\log(p/\sigma)}{2\pi\Delta}\right)_{+} \frac{\log p}{\sqrt{p}} \nonumber \\
	&\leq\displaystyle\sum_{j=0}^{J-1}\bigg(\dfrac{\log x_j}{\sqrt{x_j}}M_j\bigg)\dfrac{(28+\varepsilon)\sqrt{x_j}}{h(-D)\log x_j}  \leq
	\dfrac{(28+\varepsilon)\,\sqrt{\sigma}\,(2\pi\Delta)}
	{h(-D)}
	\,\displaystyle\int_1^{N}(\widehat{F}(t))_+\,\d t+O(1).
	\end{align*}
	We treat the other interval in a similar way. Combining the two intervals, we obtain
	%
	%
	\begin{align*}
	\displaystyle\sum_{\substack{1<\left|\frac{\log(p/\sigma)}{2\pi\Delta}\right|\leq N }}\dfrac{1}{\sqrt{p}}\,(\widehat{F})_+\bigg(\dfrac{\log(p/\sigma)}{2\pi\Delta}\bigg)\left\{\displaystyle\sum_{\substack{\mathfrak{a}\in A  \vspace{0.07cm}\\ \rm N\mathfrak{a}=p }}\Lambda_K(\mathfrak{a})\right\} &  
	\leq
	\dfrac{(28+\varepsilon)\,\sqrt{\sigma}\,(2\pi\Delta)}
	{h(-D)}
	\int_{[-1,1]^c}(\widehat{F}(t))_+\,\d t + O(1).
	\end{align*}
	Grouping the previous estimates, we conclude that
	\begin{align} \label{eq:sumPrimesAns}
	\begin{split}
	\sum_{n=2}^\infty\frac{1}{\sqrt{n}}\,(\widehat G)_+\left(\frac{\log n}{2\pi}\right)\left\{\displaystyle\sum_{\substack{\mathfrak{a}\in A  \vspace{0.07cm}\\ \rm N\mathfrak{a}=n }}\Lambda_K(\mathfrak{a})\right\}
	&\leq 
	\frac{\norm{F}_1}{\delta_f}(\alpha+\varepsilon)\log x \\
	&\,\,\,\,\,\,\,\,\,\,+ \dfrac{(28+\varepsilon)\,\sqrt{\sigma}\,(2\pi\Delta)}
	{h(-D)}\int_{[-1,1]^c} 
	(\widehat{F}(t))_+\,\d t+ O(1).
	\end{split}
	\end{align}
	Then, inserting the estimates \eqref{eq:sumzeros} and \eqref{eq:sumPrimesAns} in \eqref{eq:UsingGW}, and reordering the terms, we obtain
	\begin{align*}
	\sqrt{\sigma}\Delta\bigg(F(0)-(28+\varepsilon)\int_{[-1,1]^c}(\widehat{F}(t))_+\,\d t\bigg) 
	\leq & \,
	\frac{h(-D)\norm{F}_1}{2\pi}\left[
	(\alpha+\varepsilon)\frac{\log x}{\delta_f} + 2\log(1/2\pi\Delta)
	\right] +O(1).
	\end{align*}
	Sending $x\to\infty$ along the sequence, and then sending $\varepsilon\to0$, we obtain that
	\begin{equation}\label{eq:finalInequalityF}
	c\leq 2\dfrac{(\delta_f+\alpha)h(-D)}{\delta_f}\dfrac{\norm{F}_1}{F(0)-28\int_{[-1,1]^c}(\widehat{F}(t))_+ \d t},
	\end{equation}
	where we assume that the denominator is positive. By the approximation argument in \cite[Section 4.1]{CMS}, equation \eqref{eq:finalInequalityF} also holds for any even continuous function $F\in L^1(\R)$, with the mentioned restriction on the denominator. Now we must find a suitable function $F$. 
	
\subsection{Construction of $F$} Inspired by Gorbachev's constructions in \cite{Gor} for a related Fourier optimization problem (see also the remark in \cite[p. 536]{CMS}), we search numerically for optimal dilations of functions of the form 
	\begin{equation} \label{eq:H(x)}
	H(x)=\cos(2\pi x)\sum_{j=1}^n \frac{a_j}{(2j-1)^2-16x^2}.
	\end{equation}
	Using a greedy algorithm, we found the function \begin{align} \label{eq:finalInequalityF3}
	 F(x)=H\left(\frac{x}{0.98644}\right),
	\end{align}
	where 
	\[H(x)= \cos(2\pi x)\left(
	\frac{68}{1-16x^2} + \frac{5}{9-16x^2}+\frac{1}{25-16x^2}
	\right),
	\]
	which, by numerical experiment,\footnote{\,\,\, The bound 0.91833 in \eqref{eq:finalInequalityF2} was determined rigorously, using ball arithmetic with the ARB library.}  gives
	\begin{equation}\label{eq:finalInequalityF2}
	\dfrac{\norm{F}_1}{F(0)-28\int_{[-1,1]^c}(\widehat{F}(t))_+\,\d t}<0.91833.
	\end{equation}
	Therefore, inserting it in \eqref{eq:finalInequalityF} we conclude the desired result.  \qed
	
	\bigskip
	
	\section{Uncertainty and Fourier optimization}\label{sec:uncertainty}
	In this section, we discuss some qualitative aspects on the problem of choosing an optimal function $F$ in \eqref{eq:finalInequalityF2}. For $1\leq A< \infty$, in \cite{CMS} the authors introduced the functionals
	\begin{equation*} 
	J_A(F):=\frac{|F(0)| -A\int_{[-1,1]^c}|\widehat{F}(t)|\, \d t}{\|F\|_1}
	\end{equation*}
	and 
	\begin{equation*} 
	J_A^+(F):= \frac{F(0) -A\int_{[-1,1]^c}(\widehat{F}(t))_+ \,\d t}{\|F\|_1},
	\end{equation*}
	where $F$ is a continuous function such that $F\in L^1(\R)\setminus\{0\}$.  They considered the following problems:
	
	\begin{problem}
		Define $\mc{A}$ to be the class of continuous functions $F:\R\rightarrow \C$, with $F\in L^1(\R)\setminus\{0\}$, and $\mc{E}=\{F\in\mc{A}: \, \supp \widehat{F}\subset [-1,1] \}$. Find
		\begin{equation*} 
		\mc{C}(A):=\left\{
		\begin{array}{ll}
		\displaystyle\sup_{F\in \mc{A}} J_A(F),\hspace{0.3cm}\text{if}\hspace{0.2cm} 1\le A<\infty; \vspace{0.2cm}\\ 
		\displaystyle\sup_{F\in \mc{E}} \frac{|F(0)|}{\|F\|_1},\hspace{0.3cm}\text{if}\hspace{0.2cm}  A=\infty.
		\end{array}
		\right.
		\end{equation*}
	\end{problem}
	\begin{problem}
		Define $\mc{A^+}$ to be the class of even and continuous functions $F:\R\rightarrow \R$, with $F\in L^1(\R)\setminus\{0\}$, and $\mc{E^+}=\{F\in \mc{A^+}: \, \widehat{F}(t)\le 0 \  \mbox{for} \hspace{0.15cm} |t|\ge 1\}$. Find
		\begin{equation*} 
		\mc{C^+}(A):=\left\{
		\begin{array}{ll}
		\displaystyle\sup_{F\in \mc{A}} J^+_A(F),\hspace{0.3cm}\text{if}\hspace{0.2cm} 1\le A<\infty; \vspace{0.2cm}\\ 
		\displaystyle\sup_{F\in \mc{E^+}} \frac{F(0)}{\|F\|_1},\hspace{0.3cm}\text{if}\hspace{0.2cm}  A=\infty.
		\end{array}
		\right.
		\end{equation*}
	\end{problem}
	
	\smallskip

	The authors show that, for all $1\le A \le \infty$, we have $1\le \mc{C}(A) \le \mc{C^+}(A)\le 2$. The proof in Section \ref{sec:cramer} and an approximation argument (\cite[Section 4.1]{CMS}) show that, to optimize the value of the constant in Theorem \ref{thm:cramer}, we must find $\mc{C}^+(28)$, where 28 is the constant in the Brun-Titchmarsh-type result given in Corollary \ref{cor:brun}. 
	
\noindent 	One way to construct good functions for some Fourier optimization problems is to consider those of the form $F(x)=P(x)e^{-\pi x^2}$, where $P$ is a polynomial. They were constructed in \cite{CPL} via semidefinite programming. Note, however, that when $A=\infty,$ these functions do not even belong to the family $\mc{E}$, as they are never bandlimited. Similarly, when $A\to \infty$, optimizing $J_A(F)$ requires an increasing concentration of the mass of $\widehat{F}$ in the interval $[-1,1]$. For the same reason, by the uncertainty principle, we might expect that functions of the form $P(x)e^{-\pi x^2}$, when $P$ has bounded degree, become inadequate as $A$ grows, while bandlimited functions of the form \eqref{eq:H(x)}, which give the best known bounds when $A=\infty$ (see \cite{Gor}), become better. This qualitative observation can be formalized in the following way:
	
	\begin{proposition}
		Let $n\ge 1$ be an integer. Let $\mc{F}_n$ be the class of functions of the form $P(x)e^{-\pi x^2}$, where $P\in\R[x]$ is a polynomial of degree at most $n$ (not identically 0). Then, there exists $A_n>1$, such that, for all $A\ge A_n$, we have 
		\begin{equation*}
		\sup_{F\in \mc{F}_n} J_A(F) \leq 0.
		\end{equation*}
	\end{proposition}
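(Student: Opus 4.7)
The inequality $\sup_{F\in\mc{F}_n}J_A(F)\le 0$ is equivalent to the uniform bound $|F(0)| \leq A\int_{|t|\geq 1}|\widehat{F}(t)|\,\dt$ over all nonzero $F\in\mc{F}_n$, so the plan is to show that the ratio $N_1(F)/N_2(F)$ is bounded above by some constant $C_n$ depending only on $n$, where $N_1(F):=|F(0)|$ and $N_2(F):=\int_{|t|\ge 1}|\widehat{F}(t)|\,\dt$. Any $A_n>\max(C_n,1)$ will then do the job.

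The approach will be a compactness argument on the finite-dimensional real vector space
\[
V_n \;:=\; \{P(x)e^{-\pi x^2}:\; P\in\R[x],\; \deg P\le n\},
\]
of dimension $n+1$. A preliminary observation is that the Fourier transform preserves $V_n$: since $\widehat{e^{-\pi x^2}}=e^{-\pi t^2}$ and multiplication by $x$ in physical space corresponds to differentiation in frequency (up to a constant), one checks by induction on $\deg P$ that $\widehat{F}(t)=Q(t)e^{-\pi t^2}$ for some $Q\in\R[t]$ with $\deg Q \le n$ whenever $F\in V_n$. The main step of the proof will then be to verify that $N_2$ is in fact a \emph{norm} (not merely a seminorm) on $V_n$: if $N_2(F)=0$, then the continuous function $Q(t)e^{-\pi t^2}$ vanishes for all $|t|\ge 1$, which forces the polynomial $Q$ to vanish on an infinite set, hence $Q\equiv 0$ and $F\equiv 0$. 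This reflects the fact that a polynomial-times-Gaussian function cannot have a compactly supported Fourier transform, and is the one genuine analytic input of the argument.

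Once $N_2$ is known to be a norm on the finite-dimensional space $V_n$, the continuous seminorm $N_1$ is automatically dominated by $N_2$ via equivalence of norms in finite dimensions: by compactness of the $N_2$-unit sphere in $V_n$, we obtain
\[
C_n \;:=\; \sup_{F\in V_n\setminus\{0\}}\frac{|F(0)|}{\int_{|t|\ge 1}|\widehat{F}(t)|\,\dt} \;<\;\infty,
\]
and choosing any $A_n>\max(C_n,1)$ closes the argument. The main obstacle is really just the nondegeneracy of $N_2$; everything else is soft analysis on a finite-dimensional space, and no quantitative control over the size of $A_n$ is needed since the statement is purely qualitative.
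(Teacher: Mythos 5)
Your proposal is correct and follows essentially the same route as the paper: a compactness/homogeneity argument on the $(n+1)$-dimensional Fourier-invariant space $V_n$, with the one genuine analytic input being that a nonzero polynomial times a Gaussian cannot vanish on $|t|\ge 1$. The only cosmetic difference is that the paper bounds $|F(0)|\le \int_\R|\widehat F|$ and then shows the mass ratio $D_n=\max \int_{-1}^1|\widehat F|/\int_\R|\widehat F|$ satisfies $D_n<1$, whereas you directly maximize $|F(0)|/\int_{|t|\ge1}|\widehat F|$ over the unit sphere of the norm $N_2$; both yield an admissible $A_n$.
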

\noindent	In particular, for large $A$, polynomials of bounded degree times a gaussian are always far from the (positive) supremum.
	\begin{proof}
		Note that $\mc{F}_n\cup\{0\}$ is a vector space of dimension $n+1$, and it is invariant under the Fourier transform.  Clearly, for any interval $I\subset \R$, the function 
		\[(a_0,\, a_1,\ldots,\, a_{n})\mapsto \int_{I}
		\left|\sum_{j=0}^{n} a_jx^j \right|e^{-\pi x^2} \d x
		\]
		is a continuous function from $\R^{n+1}$ to $\R$, and homogeneous of degree $1$. Therefore, by a compactness argument, there exists a function $F_0\in \mc{F}_n$ that maximizes the quantity 
		\[D_n:= \max_{F\in \mc{F}_n} \frac{\int_{-1}^1 |F(x)|\,\d x}{\int_\R |F(x)|\,\d x} = \max_{F\in \mc{F}_n} \frac{\int_{-1}^1 |\widehat{F}(t)|\,\d t}{\int_\R |\widehat{F}(t)|\,\d t}.
		\]
		Since $F_0$ is not bandlimited, we have $0<D_n<1$. Additionally, note that, for $F\in\mc{F}_n$, we have
		\[|F(0)|\le \int_\R |\widehat{F}(t)|\,\d t \le \frac{1}{1-D_n}\int_{[-1,1]^c}|\widehat{F}(t)|\,\d t.
		\]
		Therefore, for $A>\frac{1}{1-D_n}$, and $F\in\mc{F}_n$, we have $J_A(F)< 0$, and this implies the desired result.
	\end{proof}
	
\vspace{0.1cm}
	
	We conjecture that a similar behavior holds for the problem $\mc{C}^+(A)$, as $A\to\infty$. For instance, functions constructed by David de Laat via semidefinite programming (applying the methods used in \cite{CPL}), with polynomials of degree at most $122$, imply the estimate $\mc{C}^+(28)\geq 1.0865$. Meanwhile, the bandlimited function defined in \eqref{eq:finalInequalityF3} gives $\mc{C}^+(28)\geq 1.0889$. 
	
	In general, for some values\footnote{\,\,\, From \cite[Theorem 1.2]{CMS}, it is known that $\mc{C}^+(1)$=2, and we include our bounds for the sake of comparison. For the other values of $A$, our bounds in Table $1$ slightly improve the general lower bounds obtained in \cite[Theorem 1.2 and 1.3]{CMS}.
} of $A$, Table $1$ compares the lower bounds for $\mc{C}^+(A)$ that are obtained via semidefinite programming, with those obtained using bandlimited functions. The functions constructed via semidefinite programming (following \cite[Section 4]{CPL}) have the form $P(x)e^{-\pi x^2}$, where $P$ is a polynomial of degree at most $82$ or $122$ (that is, functions in $\mc{F}_{82}$ or $\mc{F}_{122}$). On the other hand, the aforementioned bandlimited functions $F$ are constructed as in \eqref{eq:finalInequalityF3} (that is, $F\in \mc{PW}$).\footnote{\,\,\, The notation $\mc{PW}$ comes from the Paley-Wiener space.} Table $2$ gives the necessary parameters to define these functions. They have the form
\begin{align} \label{18_19}
	F(x)=H\bigg(\dfrac{x}{\lambda}\bigg),
\end{align}
	where
\begin{align} \label{18_20}
	H(x)=\cos(2\pi x)\bigg(\frac{a_1}{1-16x^2}+\frac{a_2}{9-16x^2}+\frac{a_3}{25-16x^2}\bigg),
\end{align} 
	with $a_1, a_2, a_3\in \R$. This gives strong evidence for the following conjecture:

\medskip

\begin{conjecture}
    There exists an absolute $\varepsilon>0$, such that the following holds: for $n\ge 1$ an integer, there exists $A_n^+>1$, such that, for $A\ge A_n^+$, we have 
		\begin{equation*}
		\sup_{F\in \mc{F}_n} J_A^+(F) \leq \mc{C}^+(A)-\varepsilon.
		\end{equation*}
\end{conjecture}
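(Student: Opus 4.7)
The strategy parallels that of the preceding proposition, but with a substantially more delicate conclusion: since $\mc{C}^+(A)\ge \mc{C}^+(\infty)>1$ for all $A$, the target is a gap below a strictly positive quantity rather than below zero. Fix $n\geq 1$. Since $\mc{F}_n\cup\{0\}$ is a finite-dimensional subspace of $L^1(\R)$, the unit sphere $\{F\in \mc{F}_n:\|F\|_1=1\}$ is compact, so $M_n:=\sup\{|F(0)|:F\in\mc{F}_n,\|F\|_1=1\}$ is finite, and the map $F\mapsto \int_{[-1,1]^c}(\widehat{F}(t))_+\,\d t$ is continuous on this sphere. Introduce a threshold $\delta>0$ to be chosen, and split normalized functions into two regimes.

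In the first regime, $\int_{[-1,1]^c}(\widehat{F}(t))_+\,\d t\ge \delta$. Here we use the trivial bound
\begin{equation*}
J_A^+(F) \le F(0) - A\delta \le M_n - A\delta,
\end{equation*}
which is $\le \mc{C}^+(\infty)-\varepsilon \le \mc{C}^+(A)-\varepsilon$ once $A\ge A_n^+$ is sufficiently large depending on $n$ and $\delta$. In the second regime, $\int_{[-1,1]^c}(\widehat{F}(t))_+\,\d t<\delta$, so $F$ lies close to the compact set $\mc{F}_n\cap\mc{E}^+$. By continuity and compactness, the supremum of $F(0)$ over Regime 2 converges, as $\delta\to 0$, to
\begin{equation*}
\alpha_n:=\sup\bigl\{F(0):F\in\mc{F}_n\cap\mc{E}^+,\,\|F\|_1=1\bigr\},
\end{equation*}
which is attained at some $F_n^\star\in\mc{F}_n\cap\mc{E}^+$. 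Thus the conjecture reduces to the quantitative approximation statement
\begin{equation*}
\sup_{n\ge 1}\alpha_n \;\le\; \mc{C}^+(\infty)-2\varepsilon
\end{equation*}
for some absolute $\varepsilon>0$; namely, extremizers of $\mc{C}^+(\infty)$ cannot be approximated in the relevant sense by any function of the form $P(x)e^{-\pi x^2}$ with $P$ a polynomial.

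The main obstacle is precisely this last step. A function $F=P(x)e^{-\pi x^2}\in\mc{F}_n$ has $\widehat F(t)=Q(t)e^{-\pi t^2}$ for some polynomial $Q$ of degree $\le n$, and the constraint $\widehat F(t)\le 0$ for $|t|\ge 1$ forces $Q$ to be a polynomial (of necessarily even degree) which is non-positive on $\{|t|\ge 1\}$. This is a genuinely restrictive cone, but its closure as $n\to\infty$ may fail to be rich enough to capture all of $\mc{E}^+$: known extremizers for problems of this type (in the spirit of Gorbachev \cite{Gor} and the Beurling--Selberg extremal theory) tend to have their Fourier transform \emph{compactly supported} in $[-1,1]$, which is structurally incompatible with the Gaussian tail $Q(t)e^{-\pi t^2}$. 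I would attempt to make this rigorous via one of two routes: (i) a structural characterization of the extremizer(s) of $\mc{C}^+(\infty)$ using reproducing-kernel arguments in an appropriate de Branges space, ruling out polynomial-times-Gaussian form; or (ii) a direct quantitative uncertainty-type inequality that lower-bounds $\mc{C}^+(\infty)-F(0)/\|F\|_1$ in terms of the non-Gaussian spread of $\widehat F$, applied uniformly to $F\in\bigcup_n\mc{F}_n\cap\mc{E}^+$. The numerical evidence in Table 1, and the fact that the bandlimited construction \eqref{eq:finalInequalityF3} strictly outperforms the degree-$122$ semidefinite construction for $A=28$, give concrete support for the required gap.
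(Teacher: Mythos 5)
This statement is a \emph{conjecture} in the paper: the authors offer no proof, present Tables 1 and 2 only as numerical evidence, and explicitly remark afterwards that proving it ``seems more subtle'' than the preceding Proposition because it concerns concentration of \emph{positive} mass rather than total mass (a sign-uncertainty phenomenon in the sense of Bourgain--Clozel--Kahane and \cite[Conjecture 3.2]{CG}). So there is no proof in the paper to compare yours against, and your proposal is not a proof either; it is a (correct) reduction of the conjecture to its core difficulty, which you then leave open.

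To be precise about what works and what does not: your two-regime splitting is sound. Regime 1 is handled exactly as in the paper's Proposition, and your compactness/continuity argument in Regime 2 (finite-dimensionality of $\mc{F}_n\cup\{0\}$, continuity of $F\mapsto F(0)$ and of $F\mapsto\int_{[-1,1]^c}(\widehat F(t))_+\,\d t$ on the $L^1$-unit sphere, nested compact sets as $\delta\downarrow 0$) validly reduces the conjecture to the statement $\sup_{n}\alpha_n\le \mc{C}^+(\infty)-2\varepsilon$, and since $A_n^+$ may depend on $n$ and $\mc{C}^+(A)\ge\mc{C}^+(\infty)$, this reduction loses essentially nothing. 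But that inequality is exactly where the entire content of the conjecture lives, and you do not prove it --- you only name two speculative routes (de Branges space arguments; a quantitative sign-uncertainty inequality). Note the structural contrast with the Proposition the paper does prove: there, the key constant $D_n<1$ comes for free from the qualitative fact that a nonzero $P(x)e^{-\pi x^2}$ is never bandlimited, so $\mc{F}_n\cap\mc{E}=\emptyset$ and the penalty term alone kills $J_A$ for large $A$. Here $\mc{F}_n\cap\mc{E}^+$ is \emph{nonempty} (one only needs $\widehat F\le 0$ outside $[-1,1]$, which even polynomials of even degree with negative leading coefficient achieve), the penalty vanishes identically on it, and $\alpha_n$ is a genuinely positive quantity whose distance from $\mc{C}^+(\infty)$ must be bounded below uniformly in $n$. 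No soft compactness or non-membership argument can produce an $n$-uniform $\varepsilon$; that is the open problem. Finally, be careful with your closing ``concrete support'': the $\mc{F}_{82}$ and $\mc{F}_{122}$ columns of Table 1 are \emph{lower} bounds produced by SDP constructions, so the fact that the bandlimited entry exceeds them at $A=28$ is consistent with the conjecture but is not evidence of an upper bound on $\sup_{F\in\mc{F}_{122}}J_{28}^+(F)$.
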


\begin{center}
\begin{table} \label{table1}
	\begin{center}
		\begin{tabular}{|c|c|c|c||c|c|c|c|}
			\hline
			$A$ & $\mc{C}^+(A): \mc{F}_{82}$ & $\mc{C}^+(A): \mc{F}_{122}$ & $\mc{C}^+(A): \mc{PW}$ & $A$ & $\mc{C}^+(A): \mc{F}_{82}$ & $\mc{C}^+(A): \mc{F}_{122}$ & $\mc{C}^+(A): \mc{PW}$   \\
			\hline 
			1.0  & 1.9016 & 1.9307 & 1.9602 &
			18.0 & 1.0893 & 1.0944 & 1.0931  \\ 
			\hline
            1.5  & 1.4070 & 1.4089 & 1.3430 & 
			18.5 & 1.0887 & 1.0938 & 1.0928 \\ 
			\hline
            2.0  & 1.2900 & 1.2933 & 1.2417 & 
			19.0 & 1.0881 & 1.0933 & 1.0925  \\ 
			\hline
            2.5  & 1.2346 & 1.2378 & 1.1972 & 
			19.5 & 1.0875 & 1.0928 & 1.0922  \\ 
			\hline
			3.0  & 1.2025 & 1.2049 & 1.1719 & 
			20.0 & 1.0870 & 1.0923 & 1.0919  \\ 
			\hline
			3.5  & 1.1807 & 1.1830 & 1.1555 & 
			20.5 & 1.0865 & 1.0918 & 1.0917  \\ 
			\hline
			4.0  & 1.1653 & 1.1673 & 1.1439 & 
			21.0 & 1.0860 & 1.0914 & 1.0914 \\ 
			\hline
			4.5  & 1.1538 & 1.1555 & 1.1355 & 
			21.5 & 1.0856 & 1.0909 & 1.0912 \\ 	
			\hline
			5.0  & 1.1448 & 1.1467 & 1.1290 & 
			22.0 & 1.0852 & 1.0905 & 1.0909 \\ 
			\hline
			5.5  & 1.1378 & 1.1396 & 1.1239 & 
			22.5 & 1.0848 & 1.0901 & 1.0907 \\ 
			\hline
			6.0  & 1.1320 & 1.1339 & 1.1198 & 
			23.0 & 1.0845 & 1.0897 & 1.0905  \\ 
			\hline
			6.5  & 1.1271 & 1.1294 & 1.1164 & 
			23.5 & 1.0841 & 1.0893 & 1.0903  \\ 
			\hline
			7.0  & 1.1228 & 1.1255 & 1.1136 & 
			24.0 & 1.0838 & 1.0890 & 1.0901  \\ 
			\hline
			7.5  & 1.1191 & 1.1222 & 1.1112 & 
			24.5 & 1.0835 & 1.0886 & 1.0900  \\ 
			\hline
			8.0  & 1.1159 & 1.1192 & 1.1091 & 
			25.0 & 1.0832 & 1.0883 & 1.0898 \\ 
			\hline
			8.5  & 1.1131 & 1.1166 & 1.1073 & 
			25.5 & 1.0830 & 1.0880 & 1.0896  \\ 
			\hline
			9.0  & 1.1107 & 1.1142 & 1.1058 & 
			26.0 & 1.0827 & 1.0876 & 1.0895  \\ 
			\hline
			9.5  & 1.1086 & 1.1121 & 1.1044 & 
			26.5 & 1.0825 & 1.0873 & 1.0893  \\ 
			\hline
			10.0 & 1.1067 & 1.1101 & 1.1031 & 
			27.0 & 1.0823 & 1.0871 & 1.0892  \\ 
			\hline
			10.5 & 1.1049 & 1.1084 & 1.1020 & 
			27.5 & 1.0820 & 1.0868 & 1.0890  \\ 
			\hline
			11.0 & 1.1033 & 1.1068 & 1.1010 & 
			28.0 & 1.0818 & 1.0865 & 1.0889  \\ 
			\hline
			11.5 & 1.1019 & 1.1054 & 1.1001 & 
			28.5 & 1.0816 & 1.0863 & 1.0888  \\ 
			\hline
			12.0 & 1.1005 & 1.1041 & 1.0993 & 
			29.0 & 1.0814 & 1.0860 & 1.0886  \\ 
			\hline
			12.5 & 1.0992 & 1.1030 & 1.0985 & 
			29.5 & 1.0812 & 1.0858 & 1.0885  \\ 
			\hline
			13.0 & 1.0980 & 1.1019 & 1.0978 & 
			30.0 & 1.0810 & 1.0856 & 1.0884 \\ 
			\hline
			13.5 & 1.0969 & 1.1009 & 1.0972 & 
			30.5 & 1.0809 & 1.0854 & 1.0883  \\ 
			\hline
			14.0 & 1.0959 & 1.1000 & 1.0966 & 
			31.0 & 1.0807 & 1.0852 & 1.0882  \\ 
			\hline
			14.5 & 1.0949 & 1.0992 & 1.0960 & 
			31.5 & 1.0805 & 1.0850 & 1.0881 \\ 
			\hline
			15.0 & 1.0940 & 1.0984 & 1.0955 & 
			32.0 & 1.0804 & 1.0848 & 1.0880  \\ 
			\hline
			15.5 & 1.0931 & 1.0976 & 1.0951 & 
			32.5 & 1.0802 & 1.0847 & 1.0879  \\ 
			\hline
			16.0 & 1.0922 & 1.0969 & 1.0946 & 
			33.0 & 1.0800 & 1.0845 & 1.0878  \\ 
			\hline
			16.5 & 1.0915 & 1.0962 & 1.0942 & 
			33.5 & 1.0799 & 1.0844 & 1.0877 \\ 
			\hline
			17.0 & 1.0907 & 1.0956 & 1.0938 & 
			34.0 & 1.0797 & 1.0842 & 1.0876  \\ 
			\hline
			17.5 & 1.0900 & 1.0950 & 1.0935 & 
			34.5 & 1.0796 & 1.0841 & 1.0875 \\ 
			\hline
			\end{tabular}
		\vspace{0.2cm}
		\caption{Table of lower bounds for $\mc{C}^+(A)$ via semidefinite programming and bandlimited functions.}
	\end{center}
\end{table}

\begin{table} \label{table2}
	\begin{center}
		\begin{tabular}{|c|c|c|c||c|c|c|c|}
			\hline
			$A$ & $\mc{C}^+(A)\hspace{0.12cm} \mbox{in} \hspace{0.12cm} \mc{PW}$ & $\{a_1, a_2, a_3\}$ & $\lambda$ & $A$ & $\mc{C}^+(A)\hspace{0.12cm} \mbox{in} \hspace{0.12cm} \mc{PW}$ & $\{a_1, a_2, a_3\}$ & $\lambda$ \\
			\hline 
 1.0 & 1.9602 & \{81, -69, 0\} & 0.100000 & 18.0 & 1.0931 & \{297, 18, 1\} & 0.977220 \\
\hline
1.5 & 1.3430 & \{189, -63, -20\} & 0.660234 &  18.5 & 1.0928 & \{297, 18, 1\} & 0.977843 \\
\hline
 2.0 & 1.2417 & \{243, -57, -20\} & 0.765530 &  19.0 & 1.0925 & \{270, 18, 1\} & 0.978433 \\
\hline
 2.5 & 1.1972 & \{216, -39, -20\} & 0.819517 &  19.5 & 1.0922 & \{270, 18, 1\} & 0.978992 \\
\hline
 3.0 & 1.1719 & \{216, -27, -20\} & 0.852929 &  20.0 & 1.0919 & \{270, 18, 1\} & 0.979523 \\
\hline
 3.5 & 1.1555 & \{216, -18, -20\} & 0.875775 &  20.5 & 1.0917 & \{270, 18, 2\} & 0.980027 \\
\hline
4.0 & 1.1439 & \{243, -15, -20\} & 0.892422 &  21.0 & 1.0914 & \{270, 18, 2\} & 0.980508 \\
\hline
4.5 & 1.1355 & \{270, -9, -20\} & 0.905109 &  21.5 & 1.0912 & \{270, 18, 2\} & 0.980966 \\
\hline
5.0 & 1.1290 & \{297, -6, -20\} & 0.915104 &  22.0 & 1.0909 & \{270, 18, 2\} & 0.981402 \\
\hline
5.5 & 1.1239 & \{324, -3, -20\} & 0.923186 & 22.5 & 1.0907 & \{270, 18, 2\} & 0.981820 \\ 
 \hline
 6.0 & 1.1198 & \{378, 0, -20\} & 0.929858 &  23.0 & 1.0905 & \{270, 18, 2\} & 0.982219 \\
 \hline
 6.5 & 1.1164 & \{405, 3, -20\} & 0.935461 &  23.5 & 1.0903 & \{270, 18, 2\} & 0.982600 \\
 \hline
7.0 & 1.1136 & \{243, 3, -10\} & 0.940232 &  24.0 & 1.0901 & \{270, 18, 3\} & 0.982966 \\
\hline
 7.5 & 1.1112 & \{297, 6, -12\} & 0.944345 &  24.5 & 1.0900 & \{243, 18, 2\} & 0.983317 \\
\hline
 8.0 & 1.1091 & \{270, 6, -9\} & 0.947928 &  25.0 & 1.0898 & \{243, 18, 3\} & 0.983653 \\
 \hline
 8.5 & 1.1073 & \{216, 6, -7\} & 0.951076 &  25.5 & 1.0896 & \{243, 18, 3\} & 0.983976 \\
 \hline
 9.0 & 1.1058 & \{297, 9, -8\} & 0.953865 &  26.0 & 1.0895 & \{243, 18, 3\} & 0.984287 \\
 \hline
 9.5 & 1.1044 & \{270, 9, -7\} & 0.956353 &  26.5 & 1.0893 & \{297, 21, 4\} & 0.984586 \\
 \hline 
 10.0 & 1.1031 & \{243, 9, -5\} & 0.958586 & 27.0 & 1.0892 & \{297, 21, 4\} & 0.984874 \\
 \hline
 10.5 & 1.1020 & \{297, 12, -6\} & 0.960601 & 27.5 & 1.0890 & \{297, 21, 4\} & 0.985151 \\
 \hline
 11.0 & 1.1010 & \{270, 12, -5\} & 0.962429 & 28.0 & 1.0889 & \{68, 5, 1\} & 0.986440 \\
 \hline
 11.5 & 1.1001 & \{270, 12, -4\} & 0.964095 &  28.5 & 1.0888 & \{297, 21, 4\} & 0.985676 \\
 \hline
 12.0 & 1.0993 & \{243, 12, -3\} & 0.965619 &  29.0 & 1.0886 & \{297, 21, 4\} & 0.985924 \\
 \hline
 12.5 & 1.0985 & \{243, 12, -3\} & 0.967019 &  29.5 & 1.0885 & \{297, 21, 4\} & 0.986165 \\
 \hline
 13.0 & 1.0978 & \{297, 15, -3\} & 0.968309 &  30.0 & 1.0884 & \{270, 21, 4\} & 0.986397 \\
 \hline
 13.5 & 1.0972 & \{297, 15, -2\} & 0.969502 &  30.5 & 1.0883 & \{270, 21, 4\} & 0.986622 \\
 \hline
 14.0 & 1.0966 & \{270, 15, -2\} & 0.970609 & 31.0 & 1.0882 & \{270, 21, 4\} & 0.986839 \\
 \hline
 14.5 & 1.0960 & \{270, 15, -1\} & 0.971638 &  31.5 & 1.0881 & \{270, 21, 4\} & 0.987049 \\
 \hline
 15.0 & 1.0955 & \{270, 15, -1\} & 0.972597 & 32.0 & 1.0880 & \{270, 21, 4\} & 0.987253 \\
 \hline
 15.5 & 1.0951 & \{270, 15, -1\} & 0.973494 &  32.5 & 1.0879 & \{270, 21 ,4\} & 0.987450 \\
 \hline
 16.0 & 1.0946 & \{243, 15, 0\} & 0.974334 &   33.0 & 1.0878 & \{270, 21, 4\} & 0.987642 \\
 \hline
 16.5 & 1.0942 & \{243, 15, 0\} & 0.975122 &  33.5 & 1.0877 & \{270, 21, 4\} & 0.987827 \\
 \hline
 17.0 & 1.0938 & \{243, 15, 0\} & 0.975863 &  34.0 & 1.0876 & \{270, 21, 4\} & 0.988007 \\
\hline
 17.5 & 1.0935 & \{297, 18, 0\} & 0.976561 &  34.5 & 1.0875 & \{270, 21, 4\} & 0.988182 \\ \hline
			\end{tabular}
		\vspace{0.2cm}
		\caption{Table of lower bounds for $\mc{C}^+(A)$ via bandlimited functions, with the corresponding parameters as defined in \eqref{18_19} and \eqref{18_20}.}
	\end{center}
\end{table}
\end{center}

\noindent 	However, proving it seems more subtle, and is related to the concentration of positive mass of a function, instead of total mass. Sign uncertainty principles of this type were first considered by Bourgain, Clozel, and Kahane \cite{BCK}, and have recently attracted considerable attention (see, for instance, \cite{CQ,CG,GOR1,GOR2}). In particular, a similar conjecture for the functions $P(x)e^{-\pi x^2}$ of bounded degree was stated in \cite[Conjecture 3.2]{CG}.

\newpage
	\section*{Appendix: Some useful estimates}

	\begin{lemma} \label{23_54} Let $x,y\geq 1$ be two parameters. Consider the radial function $G:\R^2\to \R$ defined by
		\begin{equation*} 
		G(r)=\left\{
		\begin{array}{ll}
		\min\bigg\{r^2,1,\dfrac{x+y-r^2}{y}\bigg\},\hspace{0.3cm}\text{if}\hspace{0.2cm} 0\leq r\leq (x+y)^{1/2}; \\
		0,\hspace{3.75cm}\text{if}\hspace{0.2cm}  r> (x+y)^{1/2}.
		\end{array}
		\right.
		\end{equation*}
		Then $G\in L^1(\R^2)$, and its Fourier transform $\widehat{G}$ satisfies the following properties:\begin{enumerate}
			\item For $\xi\in \R^2$ and $\xi\neq 0$,
			\begin{equation} \label{eq:H11}
			\big|\widehat{G}(\xi)\big| \ll \dfrac{(x+y)^{1/4}}{|\xi|^{3/2}}.
			\end{equation}
			\item For $\xi\in \R^2$ and $|\xi|\geq 1$,
			\begin{equation} \label{eq:H22}
			\big|\widehat{G}(\xi)\big| \ll \dfrac{1}{|\xi|^{5/2}}\bigg(1+\dfrac{x^{3/4}}{y}\bigg).
			\end{equation}
			\item For $\xi=0$,
			\begin{equation} \label{21_10}
			\widehat{G}(0)=\bigg(x+\dfrac{y}{2}\bigg)\pi -\dfrac{\pi}{2},
			\end{equation} 
		\end{enumerate}
	\end{lemma}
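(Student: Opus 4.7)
The plan is to compute $\widehat{G}$ explicitly via the Hankel transform, reducing it to a closed form in terms of the Bessel function $J_2$, and then extract the three estimates from standard Bessel asymptotics.

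Since $G$ is radial, I would write $G(r)=H(r^2)$, where $H:[0,\infty)\to[0,\infty)$ is the continuous, compactly supported piecewise-linear ``trapezoid'' given by $H(s)=s$ on $[0,1]$, $H(s)=1$ on $[1,x]$, $H(s)=(x+y-s)/y$ on $[x,x+y]$, and $0$ elsewhere. Passing to polar coordinates and making the substitution $s=r^2$ in the Hankel representation yields
\[
\widehat{G}(\rho) \;=\; 2\pi\int_0^{\sqrt{x+y}} G(r)\,J_0(2\pi r\rho)\,r\,\d r \;=\; \pi\int_0^{x+y} H(s)\,J_0(2\pi\rho\sqrt{s})\,\d s,
\]
where $\rho = |\xi|$. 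Taking $\rho\to 0$ and computing the area of the trapezoid gives $\lVert H\rVert_1 = x+y/2-1/2$, so $\widehat{G}(0) = \pi(x+y/2) - \pi/2$, which is exactly \eqref{21_10}.

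For $\rho>0$, I would integrate by parts twice, using the Bessel identities $(zJ_1(z))' = zJ_0(z)$ and $(z^2J_2(z))'=z^2J_1(z)$; after a change of variable these show that $\sqrt{s}\,J_1(2\pi\rho\sqrt{s})/(\pi\rho)$ and $s\,J_2(2\pi\rho\sqrt{s})/(\pi\rho)$ are antiderivatives of $J_0(2\pi\rho\sqrt{s})$ and $\sqrt{s}\,J_1(2\pi\rho\sqrt{s})$, respectively. Since $H$ is continuous with $H(0)=H(x+y)=0$ all boundary terms vanish, and inserting the piecewise derivative $H'$ (equal to $1$ on $(0,1)$, $0$ on $(1,x)$, $-1/y$ on $(x,x+y)$) produces the closed form
\[
\widehat{G}(\rho) \;=\; -\frac{1}{\pi\rho^2}\bigg[J_2(2\pi\rho) \;-\; \frac{(x+y)\,J_2(2\pi\rho\sqrt{x+y}) - x\,J_2(2\pi\rho\sqrt{x})}{y}\bigg].
\]

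From this identity, estimate \eqref{eq:H22} is immediate: for $\rho\ge 1$ every argument of $J_2$ is $\ge 1$, so $|J_2(u)|\ll u^{-1/2}$ applied termwise yields $|\widehat{G}(\rho)| \ll \rho^{-5/2}\bigl(1+(x+y)^{3/4}/y\bigr)$, and splitting into the cases $y\le x$ and $y>x$ gives $(x+y)^{3/4}/y \ll 1+x^{3/4}/y$. For \eqref{eq:H11} the key is to rewrite the bracket using the identity $\frac{\d}{\d t}\bigl[t\,J_2(2\pi\rho\sqrt t)\bigr] = \pi\rho\sqrt{t}\,J_1(2\pi\rho\sqrt{t})$, so that
\[
\frac{(x+y)J_2(2\pi\rho\sqrt{x+y}) - xJ_2(2\pi\rho\sqrt{x})}{y} \;=\; \frac{\pi\rho}{y}\int_x^{x+y}\!\sqrt{t}\,J_1(2\pi\rho\sqrt{t})\,\d t,
\]
and then apply the uniform bound $|J_1(u)| \ll u^{-1/2}$, valid for all $u>0$ (from the power series at $0$ and the standard decay at $\infty$), to conclude that this contribution is $\ll \rho^{1/2}(x+y)^{1/4}$. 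Combined with $|J_2(2\pi\rho)| \ll \min(\rho^2,\rho^{-1/2})$, after dividing by $\pi\rho^2$ we obtain $|\widehat{G}(\rho)| \ll (x+y)^{1/4}/\rho^{3/2} + \min(1, \rho^{-5/2})$, and the second term is dominated by the first in both regimes $\rho\le 1$ and $\rho\ge 1$, since $(x+y)^{1/4}\ge 1$. The main technical point is the careful matching of the Bessel regimes for small versus large $\rho$ in \eqref{eq:H11}; I do not anticipate any deeper obstacle.
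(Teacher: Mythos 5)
Your proof is correct, and it takes a genuinely different route from the paper. The paper works directly with the Hankel integral $2\pi\int_0^\infty rG(r)J_0(2\pi r|\xi|)\,\d r$, splits it at $r=1/|\xi|$, and feeds the large-argument asymptotic expansion of $J_0$ (with its explicit $\cos$ and $\sin$ terms) into one or two integrations by parts against $(r^{1/2}G(r))'$ and $(G(r)/r^{1/2})'$; the three estimates then come out of case analysis on the size of $1/|\xi|$. You instead exploit the fact that $G(r)=H(r^2)$ with $H$ a trapezoid, so that after the substitution $s=r^2$ the recurrences $(zJ_1(z))'=zJ_0(z)$ and $(z^2J_2(z))'=z^2J_1(z)$ give \emph{exact} antiderivatives, all boundary terms vanish, and you land on the closed form
\begin{equation*}
\widehat{G}(\rho)=-\frac{1}{\pi\rho^2}\left[J_2(2\pi\rho)-\frac{(x+y)J_2\big(2\pi\rho\sqrt{x+y}\big)-xJ_2\big(2\pi\rho\sqrt{x}\big)}{y}\right],
\end{equation*}
from which \eqref{eq:H22} is immediate via $|J_2(u)|\ll u^{-1/2}$ (using $x,y\ge1$ so all arguments are $\gg1$ when $\rho\ge1$, and $(x+y)^{3/4}/y\ll 1+x^{3/4}/y$), while \eqref{eq:H11} follows from reverting the last piece to $\frac{\pi\rho}{y}\int_x^{x+y}\sqrt{t}\,J_1(2\pi\rho\sqrt{t})\,\d t$ and the uniform bound $|J_1(u)|\ll u^{-1/2}$, with the $J_2(2\pi\rho)$ term controlled by $\min(\rho^2,\rho^{-1/2})$; I checked these steps and the trapezoid-area computation for \eqref{21_10}, and they are all sound. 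What your approach buys is an identity rather than a chain of estimates — no splitting at $r=1/|\xi|$, no oscillatory asymptotic expansion, and a transparent view of where the $(x+y)^{1/4}$ and $x^{3/4}/y$ factors come from. What it gives up is generality: it hinges on $H'$ being piecewise constant, whereas the paper's argument would survive more general weight profiles.
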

	\begin{proof} It is clear that $G\in L^1(\R^2)$. Since $G$ is a radial function, it follows that (see \cite[p. 429]{Grafakos}),
		\begin{align} \label{14_51} 
		\widehat{G}(\xi) = 2\pi\int_{0}^\infty r\,G(r)\,J_0(2\pi r|\xi|)\,\d r,
		\end{align} 
		for $\xi\in \R^2$, where $J_0$ is the Bessel function of order $0$. Since $J_0(0)=1$, a simple computation shows \eqref{21_10}. Let us start proving the estimate \eqref{eq:H11}. For $\xi\neq 0$, we split the integral in \eqref{14_51} into the ranges $0\leq r <1/|\xi|$ and $1/|\xi|\leq r <\infty$. Using the estimates $|J_0(t)|\ll 1$ and $|G(r)|\leq 1$, it follows that 
		\begin{align} \label{19_31}
		\bigg|\int_{0}^{1/|\xi|} r\,G(r)\,J_0(2\pi r|\xi|)\,\d r \bigg| \ll \dfrac{1}{|\xi|^2}. 
		\end{align}
		To estimate the second integral, by \cite[8.451-1]{GR} we recall that 
		$$
		J_0(t)= \bigg(\dfrac{2}{\pi t}\bigg)^{1/2}\bigg\{\cos(t-\tfrac{\pi}{4})+\frac{1}{8t}\sin(t-\tfrac{\pi}{4})+O\bigg(\dfrac{1}{t^2}\bigg)\bigg\}
		$$
		for $|t|\gg 1$. Then, using integration by parts and the facts that $|G(r)|\leq 1$, and $\int_{0}^{\infty }G(r)/r^{3/2}\,\d r<\infty$, we obtain that
		\begin{align} \label{17_29}
		\begin{split}
		\int_{1/|\xi|}^\infty r\,G(r)\,&J_0(2\pi r|\xi|) \,\d r \\
		& =-\dfrac{1}{2\pi^2|\xi|^{3/2}}\int_{1/|\xi|}^\infty (r^{1/2}G(r))'\,\sin(2\pi r|\xi|-\tfrac{\pi}{4})\,\d r  \\ 
		& \,\,\,\,\,\,+ \dfrac{1}{32\pi^3|\xi|^{5/2}}\int_{1/|\xi|}^\infty \bigg(\dfrac{G(r)}{r^{1/2}}\bigg)'\,\cos(2\pi r|\xi|-\tfrac{\pi}{4})\,\d r+ O\bigg(\min\bigg\{\dfrac{1}{|\xi|^{5/2}},\dfrac{1}{|\xi|^{2}}\bigg\}\bigg).
		\end{split}
		\end{align}
		Therefore,
		\begin{align*}  
		\bigg|\int_{1/|\xi|}^\infty r\,G(r)\,J_0(2\pi r|\xi|)\,\d r \bigg| & \ll\dfrac{1}{|\xi|^{3/2}}\int_{1/|\xi|}^\infty \big|(r^{1/2}G(r))'\big|\,\d r + \dfrac{1}{|\xi|^{5/2}}\int_{1/|\xi|}^\infty \bigg|\bigg(\dfrac{G(r)}{r^{1/2}}\bigg)'\bigg|\,\d r+ \min\bigg\{\dfrac{1}{|\xi|^{5/2}},\dfrac{1}{|\xi|^{2}}\bigg\} \\
		& \ll \dfrac{1}{|\xi|^{3/2}}\int_{0}^\infty \bigg|\dfrac{G(r)}{r^{1/2}}\bigg|\,\d r + \dfrac{1}{|\xi|^{3/2}}\int_{0}^\infty \big|r^{1/2}G'(r)\big|\,\d r + \dfrac{1}{|\xi|^{2}}.
		\end{align*}
		Spliting the above integrals according to the definition of $G$, and using the mean value theorem, it follows that, for $1/|\xi|\leq \sqrt{x+y}$, we have
		\begin{equation} \label{19_32}
		\bigg|\int_{1/|\xi|}^\infty r\,G(r)\,J_0(2\pi r|\xi|)\,\d r \bigg| \ll \dfrac{(x+y)^{1/4}}{|\xi|^{3/2}}.
		\end{equation}
		Combining \eqref{19_31} and \eqref{19_32} we obtain \eqref{eq:H11} in the case $1/|\xi|\leq (x+y)^{1/2}$. When $1/|\xi|> (x+y)^{1/2}$, we bound as in \eqref{19_31} to obtain 
		\begin{align*} 
		\big|\widehat{G}(\xi)\big|=\bigg|2\pi\int_{0}^{(x+y)^{1/2}} r\,G(r)\,J_0(2\pi r|\xi|)\,\d r \bigg| \ll x+y\ll \dfrac{(x+y)^{1/4}}{|\xi|^{3/2}}.
		\end{align*}
		This conclude the proof of the estimate \eqref{eq:H11}. Now, let us prove \eqref{eq:H22}.  Suppose that $|\xi|\geq 1$. We split the integral in \eqref{14_51} as in the previous case, and we bound the first integral as follows:
		\begin{align} \label{19_311}
		\bigg|\int_{0}^{1/|\xi|} r\,G(r)\,J_0(2\pi r|\xi|)\,\d r \bigg| \ll  \int_{0}^{1/|\xi|} r^3\,\d r \ll \dfrac{1}{|\xi|^4}. 
		\end{align}
		On the other hand, in \eqref{17_29} we split the last integrals (depending on the value of $1/|\xi|\leq 1$) and use integration by parts (one more time). In this way, we obtain that 
		\begin{align} \label{19_312}
		\bigg|\int_{1/|\xi|}^\infty r\,G(r)\,J_0(2\pi r|\xi|) \,\d r\bigg|
		\ll \dfrac{1}{|\xi|^{5/2}}\bigg(1+\dfrac{x^{3/4}}{y}\bigg).
		\end{align}
		Then, combining \eqref{19_311} and \eqref{19_312} we obtain \eqref{eq:H22}.
	\end{proof}

	\begin{lemma} \label{lemma10}
		Let $K$ be an imaginary quadratic field, and let $\qq$ be an integral ideal. Then,
		\begin{equation*}
		\sum_{\mathfrak{p}\mid \mathfrak{q},\, k\ge 1}\frac{\log \rm N\mathfrak{p}}{(\rm N\mathfrak{p})^{k/2}}\ll 
		\sqrt{\log \rm N\mathfrak{q}}.
		\end{equation*}
	\end{lemma}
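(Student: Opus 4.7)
My plan is to proceed in three steps: reduce the double sum to a sum over prime ideals, then to a sum over rational primes below them, and finally bound the latter via a standard splitting argument.

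First, I would perform the geometric sum over $k \ge 1$. For a fixed prime ideal $\mathfrak{p}$,
\[
\sum_{k\ge 1}\frac{\log \mathrm{N}\mathfrak{p}}{(\mathrm{N}\mathfrak{p})^{k/2}}
= \frac{\log \mathrm{N}\mathfrak{p}}{\sqrt{\mathrm{N}\mathfrak{p}}-1}\ll \frac{\log \mathrm{N}\mathfrak{p}}{\sqrt{\mathrm{N}\mathfrak{p}}},
\]
since $\mathrm{N}\mathfrak{p}\ge 2$. Hence it suffices to bound $\sum_{\mathfrak{p}\mid\qq} \log \mathrm{N}\mathfrak{p}/\sqrt{\mathrm{N}\mathfrak{p}}$.

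Next, I would reduce to a sum over rational primes. Since $K/\Q$ is quadratic, each rational prime $p$ has at most two prime ideals above it, and each such prime ideal $\mathfrak{p}$ satisfies $\mathrm{N}\mathfrak{p}\in\{p,p^2\}$. Moreover, any $\mathfrak{p}\mid \qq$ lies above some rational prime $p$ dividing $\mathrm{N}\qq$. Since the function $p\mapsto (\log p)/\sqrt{p}$ dominates $(2\log p)/p$ for $p\ge 2$, I obtain
\[
\sum_{\mathfrak{p}\mid\qq}\frac{\log \mathrm{N}\mathfrak{p}}{\sqrt{\mathrm{N}\mathfrak{p}}}
\le 2\sum_{p\mid \mathrm{N}\qq}\frac{\log p}{\sqrt{p}}.
\]

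The main (still routine) step is to show $\sum_{p\mid N}\frac{\log p}{\sqrt{p}}\ll \sqrt{\log N}$ for $N=\mathrm{N}\qq\ge 2$ (the case $N=1$ being trivial). I would split at a parameter $y\ge 2$. For $p\le y$, classical Mertens-type estimates give $\sum_{p\le y}(\log p)/\sqrt{p}\ll \sqrt{y}$; for $p>y$ dividing $N$, I bound
\[
\sum_{\substack{p\mid N\\ p>y}}\frac{\log p}{\sqrt{p}}\le \frac{1}{\sqrt{y}}\sum_{\substack{p\mid N\\ p>y}}\log p\le \frac{\log N}{\sqrt{y}}.
\]
Choosing $y=\max\{\log N,2\}$ balances the two terms and yields $\sum_{p\mid N}(\log p)/\sqrt{p}\ll \sqrt{\log N}$, which combined with the two reductions above delivers the claim.

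The only genuine subtlety is ensuring absolute implicit constants when $\mathrm{N}\qq$ is small; but for $\mathrm{N}\qq$ bounded, the full sum is itself bounded by an absolute constant, so no real obstacle arises. The argument is essentially a quadratic-field analogue of the standard estimate $\sum_{p\mid n}(\log p)/\sqrt{p}\ll \sqrt{\log n}$, with a harmless factor of $2$ coming from the degree of $K/\Q$.
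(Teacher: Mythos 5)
Your proof is correct, and while the outer reductions match the paper's (both of you pass from prime ideals of $K$ to rational primes dividing $\rm N\qq$ using the splitting behaviour in a quadratic field, and both dispose of the $k\ge 2$ terms cheaply), the core estimate $\sum_{p\mid N}(\log p)/\sqrt{p}\ll\sqrt{\log N}$ is handled by a genuinely different argument. The paper compares the set of prime divisors of $N$ with the first $\omega(N)$ primes, invokes $p_n\ll n\log n$ together with the monotonicity of $y\mapsto y^{-1/2}\log y$, obtains $\ll\sqrt{\omega(N)\log\omega(N)}$, and then finishes with the Hardy--Ramanujan-type bound $\omega(N)\ll\log N/\log\log N$. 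You instead split at a threshold $y$, use the Chebyshev/Mertens bound $\sum_{p\le y}(\log p)/\sqrt p\ll\sqrt y$ for small primes and the trivial bound $\sum_{p\mid N}\log p\le\log N$ for large ones, and optimize $y=\log N$. Your route is more elementary and self-contained (no need for $\omega(n)\ll\log n/\log\log n$ or the prime counting asymptotics), at the cost of introducing the auxiliary parameter; the paper's route makes the extremal configuration (primorial-like $N$) more visible. Both yield the same bound.

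One small inaccuracy worth fixing: your claim that $(\log p)/\sqrt p$ \emph{dominates} $(2\log p)/p$ for all $p\ge 2$ fails at $p=2$ and $p=3$ (e.g.\ $(2\log 2)/2=\log 2>(\log 2)/\sqrt 2$). This is harmless, since $(2\log p)/p\le 2(\log p)/\sqrt p$ for every $p\ge 2$ and the factor $2$ is absorbed into the implied constant, but the inequality as literally stated should be weakened to a $\ll$ with an absolute constant.
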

	\begin{proof}
		Using the factorization law of primes in imaginary quadratic fields \cite[p. 57]{IK}, one can see that, for each $k\geq 1$,
		\begin{align*} 
		\sum_{\mathfrak{p}\mid \mathfrak{q}}\frac{\log \rm N\mathfrak{p}}{(\rm N\mathfrak{p})^{k/2}}& =\sum_{p}\left(\sum_{\substack{\mathfrak{p}\mid \mathfrak{q}\\\rm N\pp=p}}\frac{\log \rm N\mathfrak{p}}{(\rm N\mathfrak{p})^{k/2}}\right)+\sum_{p}\left(\sum_{\substack{\mathfrak{p}\mid \mathfrak{q}\\\rm N\pp=p^2}}\frac{\log \rm N\mathfrak{p}}{(\rm N\mathfrak{p})^{k/2}}\right)\ll \sum_{p|\rm N\qq}\frac{\log p}{p^{k/2}}.
		\end{align*} 
		It is clear that the sum over $k\geq 3$ in the above expression contributes $O(1)$, and the sum when $k=2$ is bounded by the sum when $k=1$. Let us analyze the latter case. Assume that $\rm N\qq\geq 3$. We denote by $\omega(n)$ the number of distinct positive integer prime factors of $n$, and by $p_n$ the $n$-th prime number. Since $p_n\leq Cn\log n$ for some $C>0$, and the function $y\mapsto y^{-k/2}\log y$ is eventually decreasing, it follows that
		$$
		\sum_{p|\rm N\qq}\frac{\log p}{\sqrt{p}} \ll \sum_{p\leq p_{\omega(\rm N\qq)}}\frac{\log p}{\sqrt{p}}\ll\sum_{p\leq C\omega(\rm N\qq)\log(\omega(\rm N\qq))}\frac{\log p}{\sqrt{p}}\ll \sqrt{{\omega(\rm N\qq)}\log(\omega(\rm N\qq))},
		$$
		where we used integration by parts in the last step. We conclude our desired result using the classical estimate for $\omega(n)$ (see \cite[Theorem 2.10]{MV2}):
		$$
		w(n)\ll \dfrac{\log n}{\log\log n}.
		$$
	\end{proof}

	\section*{Acknowledgements}
	We would like to thank David de Laat for carrying out computations to obtain the lower bound for $\mc{C}^+(28)$ and Table $1$, via semidefinite programming. We are grateful to Emanuel Carneiro, Micah B. Milinovich, Kristian Seip, Jesse Thorner, Asif Zaman, and the anonymous referee for their helpful comments. AC was supported by Grant 275113 of the Research Council of Norway. O.Q-H. acknowledges support from CNPq - Brazil and from the STEP program of ICTP - Italy.

\end{document}